\documentclass[twoside,11pt]{article}

\usepackage{jmlr2e}
\usepackage{lastpage}
\usepackage{booktabs}
\usepackage{amsmath}
\newtheorem{assumption}{Assumption}

\begin{document}

\title{Towards  regularized learning from functional data with covariate shift}

\author{
\name Markus Holzleitner \email holzleitner@ml.jku.at \\
\addr Institute for Machine Learning\\
Johannes Kepler University Linz\\
Austria
\AND
\name Sergiy Pereverzyev Jr \email sergiy.pereverzyev@i-med.ac.at \\
\addr Department of Neuroradiology\\
Medical University of Innsbruck\\
Austria
\AND
\name Sergei V. Pereverzyev \email sergei.pereverzyev@oeaw.ac.at \\
\addr Johann Radon Institute for Computational and Applied Mathematics\\
Austrian Academy of Sciences\\
Austria
\AND
\name Vaibhav Silmana \email vibhusilmana@gmail.com \\
\addr Department of Mathematics\\
Indian Institute of Technology Delhi\\
India
\AND
\name S. Sivananthan \email siva@maths.iitd.ac.in \\
\addr Department of Mathematics\\
Indian Institute of Technology Delhi\\
India
}

\editor{...}

\maketitle

\begin{abstract}
This paper investigates a general regularization framework for unsupervised domain adaptation in vector-valued regression under the covariate shift assumption, utilizing vector-valued reproducing kernel Hilbert spaces (vRKHS).
Covariate shift occurs when the input distributions of the training and test data differ, introducing significant challenges for reliable learning. By restricting our class of hypothesis space, we develop a practical operator-learning algorithm capable of handling functional outputs. We establish optimal convergence rates for the proposed framework under a general source condition, providing a theoretical foundation for regularized learning in this setting. {We also propose an aggregation-based approach that forms a linear combination of estimators corresponding to different regularization parameters and different kernels. The proposed approach addresses the challenge of selecting appropriate tuning parameters that is crucial for constructing a good estimator, and we provide a theoretical justification for its effectiveness.} Further, we illustrate our proposed method 
 on real-world face image dataset, demonstrating the method’s robustness and effectiveness in mitigating distributional discrepancies under covariate shift. 
\end{abstract}

\begin{keywords}
  Aggregation, Covariate shift, Operator learning, vRKHS, Regularization

\end{keywords}

\section{Introduction}
In statistical learning theory, the fundamental goal is to learn a relationship between input and output variables based on observed data. 
Formally, given an input space $X$ and an output space $Y$, we assume that the dataset $\mathbf{z} =$ $\{(x_i, y_i)\}_{i=1}^n \subset X \times Y$, is drawn independently according to an unknown probability distribution $\rho$ on $X \times Y$. 
{The aim of the learning algorithm is to construct a function $f:X\rightarrow Y$ that approximates the underlying dependency between the input $x$ and the output $y$, achieving a small prediction error with respect to $\rho$.}
However, in many real-world situations, the training and testing data are not sampled from the same distribution; 
this leads to the problem of \emph{domain adaptation}. 

Let us denote the source distribution as  $p(x,y)$ and the target distribution as $q(x,y)$ on $X\times Y$. If the source and target distributions are entirely unrelated, then it is impossible to minimize the target risk based solely on source samples \cite{impossibility}.
Hence, to make domain adaptation feasible, one typically assumes a structural
relationship between the two distributions. A particularly important and tractable case is the
\emph{covariate shift} setting. 

\vspace{0.4em}

Generally, covariate shift refers to a change in the input distribution  compared to the training data used to build learning algorithms. In case of finite dimensional inputs and outputs, this issue has been extensively discussed in the literature, starting with the seminal work by  \cite{shimodaira2000improving}. To the best of our knowledge, the covariate shift issue is much less studied in the case of functional data, when both the output and the input variables are functions. At the same time, functional data with covariate shift frequently appear in various practical applications where changes in input distributions are caused by environmental and operational factors, as it is the case, for example, in Structural health monitoring discussed in the study by  \cite{Wittenberg_Neumann_Mendler_Gertheiss_2025}. In that study, the authors propose an adjustment to covariate-induced variations by training a functional mixed additive model for function-on-function regression,  in which the regression function is assumed to be expanded in a finite sum of chosen basis functions such as  B-splines, for example. Note that such family of models is covered by class of learning algorithms in vector-valued reproducing kernel Hilbert spaces (vRKHS) that has been recently investigated in the study by  \cite{meunier2024optimal}. 

At the same time, we are not aware of any results on learning in vRKHS from functional data with covariate shift, and this is the goal of the present study to shed light on the possibility of such learning. For this aim, we combine the technique developed in  \cite{meunier2024optimal} with the ideas presented in \cite{Profsergie2022}.

Mathematically, the covariate shift assumption posits the following relationship between the source and target distributions:
\[
 p(x,y) = p(y|x) \, p_X(x), \quad
    q(x,y) = p(y | x) \, q_X(x),
\]
where \( p_X(x) \) and \( q_X(x) \) are the marginal distributions and \( p(y|x) \) is the same conditional distribution  that is related by the expression $dp(y| x) = \mathcal{K}(dy, x)$ to the so-called Markov kernel $\mathcal{K}(dy, x)$, which has been employed in the analysis \cite{meunier2024optimal}.

Under this setting, one typically has access to labeled samples \(\{(x_i, y_i)\}_{i=1}^n\) from the source distribution \(p(x,y)\) and unlabeled inputs \(\{x'_j\}_{j=1}^m\) from the target marginal distribution \(q_X\). The goal is to minimize the expected risk over the target domain $(X\times Y, q)$:
\begin{equation}\label{eq:targetrisk}
\mathcal{R}_q(f) 
= \int_{X \times Y} \big\|f(x)-y\big\|_Y^2 \, dq(x,y).
\end{equation}

\noindent
Assuming
\[
    \int_{X \times Y} \|y\|_Y^2 \, dq(x,y) < \infty,
\]
it can be shown that the minimum of the expected risk $\mathcal{R}_q(f)$ over  Bochner space $L_2(q_X,Y)$ of square-integrable functions
$f : X \to Y$ with respect to the marginal distribution $q_X$ on $X$ is attained at the \emph{regression function}
\[
    f_q(x) = \int_Y y \, dp(y| x).
\]
\vspace{0.3em}

Since the conditional distribution is identical,
the  regression functions corresponding to the source and target domains coincide, i.e., $f_p = f_q$.
We denote this common regression function by $f^*$.
Hence, given the available data, the goal is to minimize the target risk
over $L_2(q_X,Y)$ by constructing an estimator $f_\mathbf{z}$ that closely approximates
the regression function $f^*$, with the key distinction that, unlike standard
supervised learning where the minimization is performed in $L_2(p_X,Y)$ under
the source marginal $p_X$, domain adaptation seeks to minimize the error in
$L_2(q_X,Y)$ determined by the target marginal $q_X$. The quality of this approximation is typically quantified in
terms of the \emph{excess risk}, $\mathcal{R}_q(f_\mathbf{z}) - \mathcal{R}_q(f^*)$. From \cite{devito(2007)} we know that it can be expressed as
\[
\mathcal{R}_q(f_\mathbf{z}) - \mathcal{R}_q(f^*)
= \big\|f_\mathbf{z} - f^*\big\|_{L_2(q_X,Y)}^2 \; .
\]

\noindent

\vspace{0.5em}

Covariate shift has been studied in finite-dimensional and
scalar-output settings.
Recent advances have extended this analysis to more general setups with improved convergence rates
(see, ~\cite{DBLP:conf/iclr/DinuHBNHEMPHZ23}, \cite{Profsergie2022}, \cite{shimodaira2000improving}).
In \cite{Profsergie2022}, convergence rates were established under general source condition characterized by splitting form of the index function. Moreover, a number of recent studies have investigated methods for addressing the case of unbounded density ratios; see \cite{gogolashvili2023importance, ma2023optimally, fan2025spectral, liu2025spectral}. Furthermore, convergence analyses have been established for
computationally efficient methods such as Nyström subsampling~\cite{covnystrom}, which substantially reduce
the computational complexity of kernel-based algorithms while preserving optimal statistical guarantees.  

\vspace{0.4em}

The vRKHS framework offers an approach to regression problems involving functional or infinite-dimensional outputs (see, \cite{VRKHS_REF}, \cite{rastogi2017optimal}, \cite{meunier2024optimal}). Notably,~\cite{rastogi2017optimal} derived optimal convergence rates for Tikhonov and general regularization schemes under general source conditions for trace-class kernels, thereby extending the classical scalar-valued results to the vector-valued framework. More recently,~\cite{meunier2024optimal} provided a comprehensive theoretical framework for
vector-valued regularization methods, encompassing both well-specified and misspecified cases.
Within this framework, optimal convergence rates were derived for the general multiplicative kernel under interpolation space conditions in the $\gamma$-norm. Together, these results highlight that  vRKHS provides a principled framework for modern learning problems such as multi-task regression, functional data analysis, where outputs are inherently in high or infinite-dimensional space. 

Motivated by this, we first propose an operator learning algorithm and then establish convergence rates under a general source condition for a broad class of regularization schemes in vector-valued regression under covariate shift.

It is also important to note that the performance of regularized learning with reproducing kernels crucially depends on how regularization parameters and kernels are chosen. On the other hand, to the best of our knowledge, the issue with the above choices has not been raised yet in the context of learning from functional data. In this paper, we are going to shed light on this problem.

\vspace{0.5em}

\noindent
\textbf{Main contributions.} In this paper, we address the problem of \emph{vector-valued regression under covariate shift}. Our contributions are as follows: \begin{itemize}

\item We propose an operator-learning algorithm for vector-valued regression in covariate shift adaptation under the framework of vRKHS. 

\item  We establish optimal convergence rates for the above framework in the \( L_2(q_X, Y) \) norm and vRKHS norm for general regularization under general source condition.

\item We propose an aggregation strategy that alleviates the difficulty of tuning the regularization and kernel parameters, thereby enhancing the practicality of the method in high-dimensional settings. 
\end{itemize} These results mark the first systematic treatment of vector-valued covariate-shift regression, bridging theoretical development with real-world practice. 

\medskip 

\noindent
\textbf{Organization of the paper.} The remainder of this paper is organized as follows. In section~\ref{sec:preliminaries}, we introduce the necessary preliminaries and notation required for our analysis. 
Section~\ref{Construction} presents the construction of our proposed algorithm for operator learning under covariate shift. In section \ref{sec:proof}, we provide learning rates for general regularization under general source condition. In section~\ref{Aggregation}, we construct the aggregation approach and establish its convergence guarantees. Section~\ref{numerical experiments} reports numerical experiments on a real world face dataset, illustrating the practical effectiveness of our method. The proofs of the auxiliary lemmas are deferred to Section~\ref{Auxillary_results}. 

\section{Preliminaries}\label{sec:preliminaries}

 We begin by introducing background material on vRKHS, which constitutes the hypothesis class used in our framework. Detailed expositions can be found in \cite{VRKHS(BASIC)} and \cite{carmeli2008vectorvaluedreproducingkernel}. Throughout this article, we assume that $X$ is a second-countable, locally compact Hausdorff space, and that $Y$ is an infinite-dimensional real separable Hilbert space $(Y,\langle \cdot,\cdot \rangle)_Y$.

\begin{definition}
Let $X \neq \emptyset $ and $(Y, \langle \cdot, \cdot \rangle_Y)$ be a real Hilbert space. 
A Hilbert space $(\mathcal{G}, \langle \cdot, \cdot \rangle_{\mathcal{G}}) \subset Y^X$ 
is called a {reproducing kernel Hilbert space} if, for every $x \in X$ and $y \in Y$, 
the mapping 
\[
f \mapsto \langle y, f(x) \rangle_Y
\]
is a continuous linear functional on $\mathcal{G}$.

\end{definition}

\noindent
By virtue of Riesz representation theorem, for each $x \in X$ and $y \in Y$ there 
exists a linear operator $K_x : Y \to \mathcal{G}$ such that
\begin{center}
  $\langle y, f(x) \rangle_Y = \langle K_x y, f \rangle_{\mathcal{G}}, 
\qquad \forall f \in \mathcal{G}.
$    
\end{center}

\vspace{0.4em}

\noindent
As a result, the corresponding adjoint operator $K_x^* : \mathcal{G} \to Y$ 
is defined as
\begin{center}
$K_x^* f = f(x).$

\end{center}
\vspace{0.5em}
\noindent
Through the operator $K_x$, we define the \emph{operator-valued kernel}
$
K : X \times X \to {L}(Y)$
by
\[
  K(x,t)y := (K_t y)(x), \quad \forall x,t \in X,\; y \in Y.
\]

\noindent
where $L(Y)$ denotes the space of bounded linear operators from $Y$ to $Y$.

\vspace{0.2em}

\noindent
The function $K$ is called the \emph{reproducing kernel} associated with the space 
$\mathcal{G}$. It satisfies the following properties:
\begin{enumerate}
    \item $K(x, t) = K(t, x)^*, \quad \forall \, x, t \in X.$
    \item For any $m \in \mathbb{N}$, $\{x_i\}_{i=1}^m \subset X$, and $\{y_i\}_{i=1}^m \subset Y$, we have
    \[
        \sum_{i,j=1}^{m} \langle y_i, K(x_i, x_j) y_j \rangle_Y \ge 0.
    \]
\end{enumerate}

\noindent
Conversely, every operator-valued function 
$K : X \times X \to {L}(Y)$ satisfying (1) and (2) uniquely determines a 
vRKHS $\mathcal{G}$. Hence there exists a one-to-one correspondence between vector-valued kernels and vRKHS. For further details, we refer the reader to~\cite{one-one}. 

\vspace{0.5em}
\noindent

As mentioned earlier, the target labels are not directly accessible. To address this challenge, one of the most widely used approaches for approximating the minimizer $f^*$ of the target expected risk $\mathcal{R}_q(f)$, based on data $\mathbf{z} = \{(x_i, y_i)\}_{i=1}^n$ sampled from the source distribution $p(x,y)$, is penalized least squares regression combined with sample reweighting. This method is commonly known as importance weighted regularized least square (IWRLS).

\vspace{0.4em}
In IWRLS, it is assumed 
that there exists a  reweighting function  
\begin{center}
    
$\beta : {X} \to \mathbb{R}_+, 
\qquad 
dq_X(x) = \beta(x)\, dp_X(x),
$
\end{center}

\noindent
which relates the target and source input distributions.
Using the relationship between $q_X$ and $p_X$, the expected risk over target distribution (\ref{eq:targetrisk}) can be equivalently 
expressed in terms of the source distribution as

\[
\mathcal{R}_q(f) 
= \int_{X \times Y} \big\|f(x)-y\big\|_Y^2 \, \beta(x)\, dp(x,y).
\]

\medskip
\noindent
The empirical risk over the target distribution, expressed in terms of the source dataset \textbf{z}, is given by
\begin{equation} \label{eq:empirical-risk-beta}
\mathcal R_\mathbf{z}(f) 
= \frac{1}{n} \sum_{i=1}^n \beta(x_i)\, \big\| f(x_i) - y_i \big\|_{Y}^2.
\end{equation}

\noindent
To rewrite (\ref{eq:empirical-risk-beta}) in operator form, 
we define the sampling operator $S_{X_S}$ on the source dataset as

\begin{center}
    
$S_{X_S} : \mathcal{G} \longrightarrow Y^n, 
\qquad 
S_{X_S}(f) = \left(f(x_1), f(x_2), \ldots, f(x_n)\right),$
\end{center}

\noindent
and its adjoint 
\[
S_{X_S}^* : Y^n \longrightarrow \mathcal{G}, 
\qquad 
S_{X_S}^*(\mathbf{y}) = \frac{1}{n} \sum_{i=1}^n  K_{x_i}y_i  ,
\]

\noindent
where $\mathbf{y} = ( y_1,y_2,\ldots, y_n ).$

\vspace{0.3em}

Furthermore, let  

\[
\mathcal B^{1/2}
= \mathrm{diag}\left(\sqrt{\beta(x_1)}, \dots, \sqrt{\beta(x_n)}\right)
\]
denote the diagonal matrix whose entries are the square roots of the importance weights $\beta(x_i)$.

\vspace{0.6em}

\noindent
Using sampling operator, we may express the empirical risk (\ref{eq:empirical-risk-beta}) as
\begin{equation} \label{eq:empirical-risk-operator}
    \mathcal{R}_\mathbf{z}(f) \;=\; \left\|\mathcal B^{1/2} S_{X_S} f \;-\; \mathcal B^{1/2} \mathbf{y} \right\|_n^2,
\end{equation}
where the norm $\|\cdot\|_n$ is given by

\[
\| \mathbf{y} \|_n^2 = \frac{1}{n} \sum_{i=1}^n \| y_i \|_Y^2.
\]

\noindent
Taking  Fréchet derivative of (\ref{eq:empirical-risk-operator}) and setting it to zero, we obtain the following equation for the minimizer of the empirical risk

\begin{equation}\label{Sxseq}
       \left(S_{X_S}^* \mathcal B S_{X_S}\right) f \;=\;  \, S_{X_S}^* \mathcal B \mathbf{y}.
\end{equation}

\begin{remark}
  Note that, the weights appearing in ~(\ref{Sxseq}) are assumed to be {exact}, 
i.e., they are directly available and not approximated. 
In practice, however, such weights are typically unknown and need to be estimated.
An effective algorithm for estimating the reweighting function~$\beta$, known as 
KuLSIF (Kernel unconstrained least-squares importance fitting), 
was proposed in \cite{IWRLS1}. 
For the sake of completeness, we briefly introduce this algorithm in 
Section~\ref{Construction}.

\end{remark}

In practical situations, data is often noisy or inaccurate; therefore, solving equation (\ref{Sxseq}) directly may produce a solution that excessively conforms to the training data, thereby capturing noise rather than the underlying structure. Regularization mitigates this issue by penalizing overly complex functions, resulting in a smoother and more generalizable solution. Next, we define the \emph{regularization family} \cite{regularization}, which can be seen as a family of functions 
approximating the inversion map.

\begin{definition}\label{def:regularization}
Let $g_\lambda: [0,s] \to \mathbb{R}$, $0 < \lambda \leq s$, denote a family of functions. 
The family $\{g_\lambda\}_{\lambda > 0}$ is referred to as a \emph{regularization family} if it fulfills the following conditions:
\begin{itemize}
    \item[\textnormal{(i)}] There exists $D > 0$ such that 
    \[
        \sup_{\sigma \in (0,s]} |\sigma g_\lambda(\sigma)| \leq D.
    \]
    \item[\textnormal{(ii)}] There exists $B > 0$ such that 
    \[
        \sup_{\sigma \in (0,s]} |g_\lambda(\sigma)| \leq \frac{B}{\lambda}.
    \]
\end{itemize}
Furthermore, let $\gamma > 0$ be such that 
\[
    \sup_{\sigma \in (0,s]} |1 - \sigma g_\lambda(\sigma)| \leq \gamma.
\]

\noindent
The largest $\nu > 0$ for which \[ \sup_{\sigma \in (0,s]} |1 - \sigma g_\lambda(\sigma)| \, \sigma^\nu \leq \gamma_\nu \lambda^\nu \] holds is called the \emph{qualification} of the regularization family $g_\lambda$, where $\gamma_\nu$ is independent of $\lambda$.

\end{definition}
\begin{definition}\label{qualfication_covering}
Let $\nu>0$ be the qualification of a regularization family.  
We say that the qualification $\nu$ \emph{covers} the index function $\varphi$, if there exists a constant $c>0$ such that
\begin{equation}\label{eq:cover}
c\,\frac{t^{\nu}}{\varphi(t)}
\;\le\;
\inf_{\,t \le \sigma \le s}
\frac{\sigma^{\nu}}{\varphi(\sigma)},
\end{equation}
for all $0 < t \le s$.
Here, $\varphi$ is an index function, i.e., $\varphi:[0,\infty)\to[0,\infty)$ is continuous, strictly increasing, and satisfies $\varphi(0)=0$.
\end{definition}

\noindent
Next we give few examples of some standard regularization schemes.

\begin{enumerate}
    \item \textbf{Tikhonov Regularization:}
    \[
        g_\lambda(\sigma) = \frac{1}{\sigma + \lambda}.
    \]
    This is the classical and most widely used choice. It has qualification $\nu = 1$.

    \item \textbf{Iterated Tikhonov Regularization:}
    \[
        g_\lambda(\sigma) = \frac{1}{\sigma}
        \left( 1 - \left( \frac{\lambda}{\sigma + \lambda} \right)^{m} \right),
        \quad m \in \mathbb{N}.
    \]
    The parameter $m$ controls the number of iterations, and the qualification increases 
    linearly with $m$.

    \item \textbf{Spectral Cut-off Regularization:}
   \[
        g_\lambda(\sigma) = 
        \begin{cases}
            \dfrac{1}{\sigma}, & \text{if } \sigma \ge \lambda,\\[0.4em]
            0, & \text{otherwise.}
        \end{cases}
    \]

    This method effectively discards small singular values below $\lambda$, and 
    has infinite qualification.
\end{enumerate}

\noindent

With the use of a regularization family one can construct a regularized approximation $\widehat{f}_{\mathbf{z},\lambda}$ of the solution of the equation (\ref{Sxseq}) for the
minimizer of the empirical risk as follows

\begin{equation}\label{equation:genral f_Z}
\widehat{f}_{\mathbf{z},\lambda} \;=\; g_\lambda\left(S_{X_S}^* \mathcal B S_{X_S}\right) \, S_{X_S}^* \mathcal B \mathbf{y}.
\end{equation}

\section{Construction of  Operator Learning Algorithm for Covariate shift}\label{Construction}

\noindent

\noindent
Throughout this paper, we will  consider vRKHS induced by kernels of the form
\begin{equation}\label{multk}
K(x,t) \;=\; k(x,t)\, Id_Y,    
\end{equation}
(such kernels are referred as scalar multiplicative kernel) where $Id_Y : Y \to Y$ denotes the identity operator on Y.  
The scalar valued RKHS induced by kernel $k$ is denoted by $\mathcal{H}$. Next, we state a theorem that establishes an isomorphism between the vRKHS $\mathcal{G}$ and the Hilbert space of Hilbert--Schmidt operators from $\mathcal{H}$ to $Y$, denoted by $\mathcal{S}_2(\mathcal{H},Y)$. We use the notation $\mathcal{S}_2(\mathcal{H})$ to denote the space of Hilbert--Schmidt operators from $\mathcal{H}$ to $\mathcal{H}$.

\begin{theorem}[\cite{VRKHS_REF}, Theorem 1] Let $\phi:X\rightarrow \mathcal{H}$ denotes the feature map such that $\phi(x) = k(x,\cdot)$. Then for every function $f \in \mathcal{G}$ there exists a unique operator 
$C \in \mathcal S_2(\mathcal{H}, {Y})$ such that 

\begin{center}
    $f(\cdot) = C \phi(\cdot) $
\end{center}

\noindent
with 

\begin{center}
    $\|C\|_{\mathcal S_2(\mathcal{H}, {Y})} = \|f\|_{\mathcal{G}}$
\end{center}

\noindent
and vice versa. Hence $\mathcal{G} \simeq \mathcal S_2(\mathcal{H}, {Y})$ and
it follows that $\mathcal{G}$ can be written as

\begin{center}
    $\mathcal{G} = \{ f : {X} \to {Y} \mid 
    f = C \phi(\cdot), \; C \in \mathcal S_2(\mathcal{H}, {Y}) \}.$
\end{center}
\end{theorem}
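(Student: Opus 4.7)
The plan is to construct the identification $\mathcal{G} \simeq \mathcal{S}_2(\mathcal{H},Y)$ via the explicit map $\Psi(C) := C\phi(\cdot)$, verify it is an isometry on a convenient dense subspace, and then extend by continuity. The key structural observation is that for the multiplicative kernel $K(x,t)=k(x,t)\,Id_Y$, the defining relation $(K_t y)(x) = k(x,t)y$ tells us that $K_t y$ is precisely the action of the rank-one Hilbert--Schmidt operator $y \otimes \phi(t) : \mathcal{H}\to Y$ (sending $h \mapsto \langle h,\phi(t)\rangle_{\mathcal{H}} y = h(t)y$) evaluated through $\Psi$. So the natural candidate pairing between generators of $\mathcal{G}$ and generators of $\mathcal{S}_2(\mathcal{H},Y)$ is $K_t y \leftrightarrow y\otimes \phi(t)$.

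First I would work on the dense subspace spanned by the generators. For $f=\sum_{i=1}^n K_{x_i} y_i$, let $C := \sum_{i=1}^n y_i\otimes \phi(x_i)$. A direct reproducing-property computation gives
\[
\|f\|_{\mathcal{G}}^2 = \sum_{i,j}\langle y_i, K(x_i,x_j)y_j\rangle_Y = \sum_{i,j} k(x_i,x_j)\,\langle y_i,y_j\rangle_Y,
\]
while the Hilbert--Schmidt inner product on rank-one operators yields
\[
\|C\|_{\mathcal{S}_2(\mathcal{H},Y)}^2 = \sum_{i,j}\langle y_i,y_j\rangle_Y\,\langle \phi(x_i),\phi(x_j)\rangle_{\mathcal{H}} = \sum_{i,j} k(x_i,x_j)\,\langle y_i,y_j\rangle_Y,
\]
so the two norms agree. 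Moreover $\Psi(C)(x) = \sum_i \langle \phi(x_i),\phi(x)\rangle_{\mathcal{H}} y_i = \sum_i k(x,x_i) y_i = f(x)$, so on this subspace the map $\Psi$ really does send $C$ to $f$.

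Next I would run a density argument in both directions. On the vRKHS side, $\mathrm{span}\{K_t y : t\in X,\; y\in Y\}$ is dense in $\mathcal{G}$ by the standard vRKHS construction. On the Hilbert--Schmidt side, $\mathrm{span}\{\phi(t):t\in X\}$ is dense in $\mathcal{H}$ (standard RKHS fact), hence $\mathrm{span}\{y\otimes \phi(t)\}$ is dense in the finite-rank operators, which in turn are dense in $\mathcal{S}_2(\mathcal{H},Y)$. Because $\Psi$ is a linear isometry on a dense subspace, it extends uniquely to an isometry $\widetilde{\Psi}:\mathcal{S}_2(\mathcal{H},Y)\to \mathcal{G}$. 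Injectivity is immediate from the isometry, and surjectivity follows from the fact that the image contains a dense subset and is closed.

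The only genuinely delicate point, which I would treat carefully, is making sure the extended operator $\widetilde{\Psi}(C)$ is pointwise equal to the function $x \mapsto C\phi(x)$, not just a limit in $\mathcal{G}$-norm. This is handled by noting that if $C_n\to C$ in $\mathcal{S}_2$, then for each fixed $x$,
\[
\|C_n\phi(x)-C\phi(x)\|_Y \le \|C_n-C\|_{\mathrm{op}}\,\|\phi(x)\|_{\mathcal{H}} \le \|C_n-C\|_{\mathcal{S}_2}\sqrt{k(x,x)}\longrightarrow 0,
\]
so pointwise evaluation is continuous with respect to the Hilbert--Schmidt norm, and the formula $f(\cdot)=C\phi(\cdot)$ passes to the limit. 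Uniqueness of the representing $C$ then follows because if $C\phi(x)=0$ for all $x\in X$, then $C$ vanishes on the dense set $\mathrm{span}\{\phi(x)\}$ of $\mathcal{H}$ and hence $C=0$. Combining these pieces yields the asserted isomorphism and the description of $\mathcal{G}$.
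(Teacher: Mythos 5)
The paper does not prove this statement at all: it is imported verbatim as Theorem~1 of the cited reference on vector-valued RKHSs, so there is no in-paper argument to compare yours against. Your proof is correct and is the standard one for scalar multiplicative kernels $K(x,t)=k(x,t)\,Id_Y$: the pairing $K_t y \leftrightarrow y\otimes\phi(t)$, the matching norm computations $\sum_{i,j}k(x_i,x_j)\langle y_i,y_j\rangle_Y$ on both sides, density of the generators in $\mathcal{G}$ and of the finite-rank operators built from $\{\phi(t)\}$ in $\mathcal{S}_2(\mathcal{H},Y)$, and isometric extension. The one place worth a sentence more is the final pointwise identification: you verify that $C_n\phi(x)\to C\phi(x)$ when $C_n\to C$ in $\mathcal{S}_2$, but to conclude $\widetilde{\Psi}(C)(x)=C\phi(x)$ you also need that convergence in $\mathcal{G}$-norm implies pointwise convergence in $Y$, i.e.\ $\|f(x)\|_Y=\|K_x^*f\|_Y\le\|K_x\|_{op}\,\|f\|_{\mathcal{G}}=\sqrt{k(x,x)}\,\|f\|_{\mathcal{G}}$, which holds by the defining continuity of evaluations in a vRKHS. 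With that line added the argument is complete.
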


\noindent
By applying above theorem to the empirical risk functional 
in~(\ref{eq:empirical-risk-beta}), we get 
\begin{equation}\label{equation:emphiirical_Riskin C}
\mathcal{R}_\mathbf{z}(C) 
    \;:=\; \frac{1}{n}\sum_{i=1}^n 
    \left\| y_i - C \phi(x_i) \right\|_{{Y}}^2 \, \beta(x_i) \, .
\end{equation}    

\noindent
Note that the minimization problem over $\mathcal{G}$ can be equivalently 
expressed as an minimization over the space of Hilbert--Schmidt operators, i.e.,
\vspace{0.3em}
\[
\min_{f \in \mathcal{G}} \; \mathcal{R}_{\mathbf z}(f)
\;=\;
\min_{C \in \mathcal{S}_2(\mathcal{H},Y)} \; \mathcal{R}_{\mathbf z}(C).
\]

By recasting the problem in the space of Hilbert--Schmidt operators, 
we effectively transform the search for an optimal function 
$f \in \mathcal{G}$ into the task of identifying the best operator 
$C \in \mathcal S_2(\mathcal{H}, {Y})$ that minimizes the same risk functional. Let $C_* \in \mathcal S_2(\mathcal{H},Y)$ denotes the minimizer of $\mathcal{R}_{\mathbf z}(C)$ over $\mathcal S_2(\mathcal{H},Y)$, then the minimizer over $\mathcal{G}$ (i.e., the regression function) is given by
\[
f^\ast(\cdot)=C_* \phi(\cdot).
\]
  
\noindent
Before stating the next proposition, we recall the notion of the tensor product of Hilbert spaces, 
which will be used in the operator formulation below. 
For a detailed exposition, see \cite{aubin}.

\medskip

Let $\mathcal H \otimes \mathcal{H'}$ denote the tensor product of the Hilbert spaces $\mathcal H$ and $\mathcal H'$.
 For $x \in \mathcal H$ and $x' \in \mathcal H'$, the elementary tensor $x \otimes x'$ is defined by
\[
x \otimes x' : \mathcal H' \to \mathcal H, 
\qquad 
y' \mapsto \langle y', x' \rangle_{\mathcal H'}\, x, 
\quad y' \in \mathcal H'.
\]
Since the tensor product space $\mathcal H \otimes \mathcal H'$ is isometrically isomorphic to the Hilbert space 
$\mathcal{S}_2(\mathcal H', \mathcal H)$, 
we do not distinguish between these two spaces and use them interchangeably.

\medskip

\begin{proposition}    

The minimizer of the empirical risk (\ref{equation:emphiirical_Riskin C}) satisfies the operator equation
\begin{center}
    $\widehat{C}_{YX}^\beta = C \, \widehat{C}_{X}^\beta,$
\end{center}
where
\[
\widehat{C}_{YX}^\beta = \frac{1}{n}\sum_{i=1}^n \beta(x_i) \, y_i \otimes \phi(x_i),
    \qquad
\widehat{C}_{X}^\beta = \frac{1}{n}\sum_{i=1}^n \beta(x_i) \, \phi(x_i) \otimes \phi(x_i).
\]
\end{proposition}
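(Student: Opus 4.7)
The plan is to obtain the first-order optimality condition by differentiating the empirical risk regarded as a functional on the Hilbert space $\mathcal{S}_2(\mathcal{H},Y)$, and then to rewrite the resulting equation using elementary tensor-product identities so that the two empirical operators $\widehat{C}_{YX}^\beta$ and $\widehat{C}_X^\beta$ appear naturally.

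First I would expand, for a generic direction $H \in \mathcal{S}_2(\mathcal{H},Y)$,
\[
\mathcal{R}_\mathbf{z}(C+tH) - \mathcal{R}_\mathbf{z}(C)
= -\,\frac{2t}{n}\sum_{i=1}^n \beta(x_i)\,\langle y_i - C\phi(x_i),\, H\phi(x_i)\rangle_Y + O(t^2),
\]
so that the Fréchet derivative satisfies $D\mathcal{R}_\mathbf{z}(C)[H] = -\tfrac{2}{n}\sum_i \beta(x_i)\langle y_i - C\phi(x_i), H\phi(x_i)\rangle_Y$. Setting this to zero for every admissible $H$ gives the variational condition
\[
\frac{1}{n}\sum_{i=1}^n \beta(x_i)\,\langle H\phi(x_i),\, y_i - C\phi(x_i)\rangle_Y = 0, \qquad \forall H \in \mathcal{S}_2(\mathcal{H},Y).
\]

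Next I would translate this into an operator identity using the standard identification $\langle u, Av\rangle_Y = \langle A, u\otimes v\rangle_{\mathcal{S}_2(\mathcal{H},Y)}$ for the rank-one tensor $u \otimes v$ defined in the excerpt. Applied to each summand with $A = H$, $u = y_i - C\phi(x_i)$, $v = \phi(x_i)$, the condition becomes
\[
\Big\langle H,\; \frac{1}{n}\sum_{i=1}^n \beta(x_i)\,\bigl(y_i - C\phi(x_i)\bigr)\otimes \phi(x_i)\Big\rangle_{\mathcal{S}_2(\mathcal{H},Y)} = 0,
\]
for every $H$, hence the element in the second slot must vanish in $\mathcal{S}_2(\mathcal{H},Y)$.

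Finally I would invoke the composition rule $(C\phi(x_i))\otimes \phi(x_i) = C\,\bigl(\phi(x_i)\otimes \phi(x_i)\bigr)$, which follows directly from the definition of the elementary tensor as the operator $h \mapsto \langle h, \phi(x_i)\rangle_\mathcal{H}\,C\phi(x_i) = C\bigl(\langle h,\phi(x_i)\rangle_\mathcal{H}\,\phi(x_i)\bigr)$. Substituting and separating the two sums yields
\[
\frac{1}{n}\sum_{i=1}^n \beta(x_i)\,y_i \otimes \phi(x_i)
\;=\; C\;\frac{1}{n}\sum_{i=1}^n \beta(x_i)\,\phi(x_i)\otimes \phi(x_i),
\]
which is precisely $\widehat{C}_{YX}^\beta = C\,\widehat{C}_X^\beta$. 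I expect no serious obstacle; the only subtle point is to correctly move $C$ outside the elementary-tensor composition and to verify that the isomorphism $\mathcal{G}\simeq \mathcal{S}_2(\mathcal{H},Y)$ makes the minimization convex and coercive enough (in the least-squares sense) that the vanishing of the Fréchet derivative is both necessary and sufficient for a minimizer.
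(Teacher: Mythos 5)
Your proof is correct and follows essentially the same route as the paper: compute the Fréchet derivative of the weighted empirical risk, convert the variational condition into a statement in $\mathcal{S}_2(\mathcal{H},Y)$ via the identity $\langle u, Hv\rangle_Y = \langle H, u\otimes v\rangle_{\mathcal{S}_2(\mathcal{H},Y)}$, and pull $C$ out of the elementary tensor to obtain $\widehat{C}_{YX}^\beta = C\,\widehat{C}_X^\beta$. Your version is slightly more careful than the paper's (you get the sign of the linear term right and make the composition rule $(C\phi(x_i))\otimes\phi(x_i) = C(\phi(x_i)\otimes\phi(x_i))$ explicit), but the argument is the same.
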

\noindent

\noindent
\begin{proof}
Let $C,H \in \mathcal S_2(\mathcal{H},Y)$.
Consider
\[
\mathcal{R}_{\mathbf z}(C+H) - \mathcal{R}_{\mathbf z}(C)
=
\frac{1}{n}\sum_{i=1}^n
\Bigl[
\bigl\| \beta^{1/2}(x_i)\bigl(y_i - (C+H)\phi(x_i)\bigr) \bigr\|_{Y}^2
-
\bigl\| \beta^{1/2}(x_i)\bigl(y_i - C\phi(x_i)\bigr) \bigr\|_{Y}^2
\Bigr].
\]

\noindent
Expanding the norm gives

\vspace{0.5em}
\noindent
\[
\mathcal{R}_\mathbf{z}(C+H) - \mathcal{R}_\mathbf{z}(C)
    = \frac{2}{n}\sum_{i=1}^n 
        \Big\langle \beta^{1/2}(x_i)\,(y_i - C \phi(x_i)),\,
        \beta^{1/2}(x_i)\, H \phi(x_i) \Big\rangle_{Y}
        + \mathcal{O}(\|H\|^2).
\]
\vspace{0.5em}

\noindent
Using the identity
\[
    \langle a,\, H b \rangle_{Y} 
    = \langle H,\, a \otimes b \rangle_{\mathcal S_2(\mathcal{H},Y)},
\]
we can rewrite the above equation as
\[
\mathcal{R}_{\mathbf z}(C+H) - \mathcal{R}_{\mathbf z}(C)
=
\frac{2}{n}\sum_{i=1}^n
\left\langle
H,\,
\beta(x_i)\, y_i \otimes \phi(x_i)
-
\beta(x_i)\, C\phi(x_i) \otimes \phi(x_i)
\right\rangle_{\mathcal{S}_2(\mathcal{H},Y)}
+\;
\mathcal O(\|H\|^2).
\]

\noindent
Hence, taking the Fréchet derivative of $\mathcal{R}_\mathbf{z}(C)$ and setting it equal to zero, we obtain
\begin{center}
    $\widehat{C}_{YX}^\beta = C \, \widehat{C}_{X}^\beta.$
\end{center}
\end{proof}
\noindent
Finally via regularization, the minimizer $C_*$ of the empirical risk (\ref{equation:emphiirical_Riskin C}) can be approximated by 

\begin{equation}\label{c-equation}
\widehat{C}_\lambda ^\beta
= \widehat{C}_{YX}^\beta \, g_\lambda \!\left(\widehat{C}_{X}^\beta\right).
\end{equation}


\noindent
The proof of the next proposition is obtained by adapting the analysis of Proposition~1 in \cite{meunier2024optimal} to our framework.

\begin{proposition}[Representer theorem for spectral filter with importance weights]
Let $\mathbf{K} = (k(x_i,x_j))_{i,j=1}^n$ denote the Gram matrix associated with the scalar kernel $k$, 
and let
\[
\mathcal B^{1/2}
=
\mathrm{diag}\left(\sqrt{\beta(x_1)}, \dots, \sqrt{\beta(x_n)}\right).
\]
{Define $k_{x,n} = (k(x,x_1), \ldots, k(x,x_n))^T \in \mathbb{R}^n$.} 
Then the approximation
(\ref{equation:genral f_Z}) 
admits the representation
\[
\widehat{f}_{\mathbf{z},\lambda}(x) = \sum_{i=1}^n \sqrt{\beta(x_i)} y_i \, \alpha_i(x),
\]
where  $\alpha(x) = (\alpha_1(x),\alpha_2(x),\ldots,\alpha_n(x)) \in \mathbb{R}^n$ is given by
\begin{center}
    $\alpha(x) = \frac{1}{n}\, g_\lambda\!\Big(\frac{1}{n}\mathcal B^{1/2} \mathbf{K}\mathcal B^{1/2}\Big)\,\mathcal B^{1/2} \,k_{x,n}.$
\end{center}
\end{proposition}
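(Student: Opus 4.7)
The plan is to reduce everything to the scalar, finite-dimensional Gram matrix by invoking the standard push-through identity for the functional calculus, namely
\[
g_\lambda(T^*T)\, T^* \;=\; T^*\, g_\lambda(TT^*),
\]
valid for any bounded operator $T$ between Hilbert spaces whenever $g_\lambda$ is defined on the joint spectrum. I would set $T := \mathcal B^{1/2} S_{X_S} : \mathcal{G} \to Y^n$, so that $T^*T = S_{X_S}^* \mathcal B S_{X_S}$ and $T^*\mathcal B^{1/2}\mathbf y = S_{X_S}^* \mathcal B \mathbf y$. The definition (\ref{equation:genral f_Z}) then rewrites as
\[
\widehat f_{\mathbf z,\lambda} \;=\; g_\lambda(T^*T)\, T^* \mathcal B^{1/2}\mathbf y \;=\; T^*\, g_\lambda(TT^*)\, \mathcal B^{1/2}\mathbf y.
\]
The identity is immediate for monomials because $(T^*T)^k T^* = T^*(TT^*)^k$ by a one-line induction; it extends to $g_\lambda$ by polynomial/continuous approximation on the compact spectrum, which lies in $[0,s]$ under the boundedness assumptions on the kernel and the weights.

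Next I would compute $TT^*$ concretely on $Y^n$. Using the multiplicative form $K(x,t) = k(x,t)\,Id_Y$ together with $S_{X_S}^*\mathbf y = \tfrac{1}{n}\sum_i k(\cdot,x_i)\,y_i$, one checks componentwise that
\[
(S_{X_S} S_{X_S}^*\mathbf y)_j \;=\; \tfrac{1}{n}\sum_{i=1}^n k(x_j,x_i)\,y_i,
\]
so $TT^* = \mathcal B^{1/2} S_{X_S} S_{X_S}^* \mathcal B^{1/2}$ acts on $Y^n$ by the scalar matrix $\tfrac{1}{n}\mathcal B^{1/2}\mathbf K\mathcal B^{1/2}$ applied entrywise (the action commutes with the identity factor in the multiplicative kernel). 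By the continuous functional calculus, $g_\lambda(TT^*)$ is then realised by the matrix $M := g_\lambda\!\bigl(\tfrac{1}{n}\mathcal B^{1/2}\mathbf K\mathcal B^{1/2}\bigr)$ acting componentwise on $Y^n$.

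Finally I would evaluate $T^*\, g_\lambda(TT^*)\, \mathcal B^{1/2}\mathbf y$ at an arbitrary $x \in X$. Writing $v_j := \sum_i M_{ji}\,\sqrt{\beta(x_i)}\, y_i$ for the components of $M\,\mathcal B^{1/2}\mathbf y$, the formula for the adjoint $S_{X_S}^*$ yields
\[
\widehat f_{\mathbf z,\lambda}(x) \;=\; \tfrac{1}{n}\sum_{j=1}^n k(x,x_j)\sqrt{\beta(x_j)}\, v_j \;=\; \sum_{i=1}^n \sqrt{\beta(x_i)}\, y_i\, \alpha_i(x),
\]
with $\alpha_i(x) = \tfrac{1}{n}\sum_{j=1}^n k(x,x_j)\sqrt{\beta(x_j)}\, M_{ji}$. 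Symmetry of $M$ (since $\tfrac{1}{n}\mathcal B^{1/2}\mathbf K\mathcal B^{1/2}$ is symmetric and $g_\lambda$ is real-valued) rewrites this exactly as $\alpha(x) = \tfrac{1}{n}\, M\, \mathcal B^{1/2}\, k_{x,n}$, which is the claim. The only step that is not mechanical is the justification of the push-through identity beyond polynomials; in the present finite-rank setting, however, one can either invoke the continuous functional calculus directly or diagonalise $TT^*$ on its finite-dimensional range, so I do not anticipate any real obstacle there.
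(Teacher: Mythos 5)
Your argument is correct, and all the $1/n$ factors and the weighted inner product on $Y^n$ (which makes $\mathcal B^{1/2}$ self-adjoint and gives $T^* = S_{X_S}^*\mathcal B^{1/2}$) check out; the final symmetry step recovering $\alpha(x) = \tfrac{1}{n}\,g_\lambda\bigl(\tfrac{1}{n}\mathcal B^{1/2}\mathbf K\mathcal B^{1/2}\bigr)\mathcal B^{1/2}k_{x,n}$ is also right. The route differs from the paper's in a meaningful way, even though both hinge on the same push-through identity $g_\lambda(T^*T)T^* = T^*g_\lambda(TT^*)$. The paper does not work with $S_{X_S}:\mathcal G\to Y^n$ at all: it first passes to the Hilbert--Schmidt representation, writing $\widehat f_{\mathbf z,\lambda}(x) = \widehat C^\beta_{YX}\,g_\lambda(\widehat C^\beta_X)\phi(x)$ as in (\ref{c-equation}), introduces the \emph{scalar} operator $Z:\mathcal H\to\mathbb R^n$, $Zf = (f(x_i)\sqrt{\beta(x_i)})_i$, identifies $\widehat C^\beta_X = \tfrac{1}{n}Z^*Z$, and applies the push-through on that side so that $ZZ^* = \mathcal B^{1/2}\mathbf K\mathcal B^{1/2}$ is literally an $n\times n$ matrix and $Z\phi(x) = \mathcal B^{1/2}k_{x,n}$. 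You instead apply the push-through to the vector-valued operator $T = \mathcal B^{1/2}S_{X_S}:\mathcal G\to Y^n$, which obliges you to identify $TT^*$ as the block operator $\tfrac{1}{n}\mathcal B^{1/2}\mathbf K\mathcal B^{1/2}\otimes Id_Y$ and to note that $g_\lambda$ of such a block operator is $g_\lambda$ of the scalar matrix tensored with $Id_Y$ --- a small extra step the paper avoids. What your version buys is directness: the proposition is stated for (\ref{equation:genral f_Z}), i.e., for $g_\lambda(S_{X_S}^*\mathcal B S_{X_S})S_{X_S}^*\mathcal B\mathbf y$, and the paper's proof silently uses the equivalence of (\ref{equation:genral f_Z}) with the operator form (\ref{c-equation}) under the isomorphism $\mathcal G\simeq\mathcal S_2(\mathcal H,Y)$ without verifying it, whereas your computation proves the stated claim as written. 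Both proofs are equally rigorous on the push-through itself (the paper delegates it to a citation; you justify it by polynomial approximation on the compact spectrum, which is fine here since $\|T^*T\|_{op}\le b\kappa^2\le s$ under Assumptions \ref{kernel-bound} and \ref{bridingassumption}).
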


\noindent
\begin{proof}     
Recall that
\begin{center}
    $\widehat{f}_{\mathbf{z},\lambda}(x) = \widehat{C}_\lambda^\beta \phi(x).$
\end{center}

\noindent
From (\ref{c-equation}), we have

\begin{equation}\label{Flammid}
    \widehat{f}_{\mathbf{z},\lambda}(x) 
    = \widehat{C}_{YX}^\beta \, g_\lambda(\widehat{C}_X^\beta)\,\phi(x) = \Big( \frac{1}{n}\sum_{i=1}^n   \beta(x_i) \, y_i \otimes \phi(x_i) \Big)\,
       g_\lambda(\widehat{C}_X^\beta)\,\phi(x).
\end{equation}   

\noindent
Define the operator $Z : \mathcal{H} \to \mathbb{R}^n$ by
\begin{center}
    $Z f = \big( f(x_i)\sqrt{\beta(x_i)} \big)_{i=1}^n.$
\end{center}

\noindent
For any $f \in \mathcal{H}$ and $u \in \mathbb{R}^n$, the adjoint operator $Z^* : \mathbb{R}^n \to \mathcal{H}$ is given by
\[
    Z^*(u) = \sum_{i=1}^n \sqrt{\beta(x_i)}\, u_i \, \phi(x_i), 
    \qquad u = (u_1,\ldots,u_n)^T \in \mathbb{R}^n.
\]

\noindent
Therefore, we can define the operator $Z^*Z:\mathcal{H} \rightarrow\mathcal{H}$ by
\[Z^*Z(f) = \sum_{i=1}^n \beta(x_i) f(x_i)\,\phi(x_i).
\]

\noindent  
Also, $\widehat{C}_X^\beta$ can be written as
\[
\widehat{C}_X^\beta f
=
\frac{1}{n}\sum_{i=1}^n
\left( \beta(x_i)\,\phi(x_i) \otimes \phi(x_i) \right) f
=
\frac{1}{n}\sum_{i=1}^n
f(x_i)\,\beta(x_i)\,\phi(x_i).
\]

\noindent
Substituting $\widehat{C}_X^\beta = \frac{1}{n} Z^*Z$ in (\ref{Flammid}), we obtain
\[
\widehat{f}_{\mathbf z,\lambda}(x)=
\frac{1}{n}\sum_{i=1}^n
\big\langle
g_\lambda \Big(\frac{1}{n}Z^*Z\Big)\phi(x),
\phi(x_i)
\big\rangle_{\mathcal H}
y_i\,\beta(x_i).
\]

\noindent
Define 
\begin{center}
    $ 
    \qquad 
    Y = (y_1,\ldots,y_n), 
    \qquad 
    Y' =\mathcal B^{1/2}Y^T.$
\end{center}

\noindent
Recall that
\begin{center}
    $Z f = \big(f(x_i)\sqrt{\beta(x_i)}\big)_{i=1}^n 
    = \big(\langle f, \phi(x_i)\sqrt{\beta(x_i)} \rangle_{\mathcal{H}}\big)_{i=1}^n\,$.
\end{center}

\noindent
Thus, we can express the considered approximation  as
\begin{equation}\label{algo-4.3eq}
    \widehat{f}_{\mathbf{z},\lambda}(x) 
    = Z\,\!\Big(\frac{1}{n} g_\lambda\!\Big(\frac{1}{n}Z^*Z\Big)\phi(x)\Big)\, Y' = \Big(\frac{1}{n} g_\lambda\!\Big(\frac{1}{n}Z Z^*\Big) Z\phi(x)\Big)\, Y'. 
\end{equation}

\noindent
Note that the second equality follows from the analysis presented in \cite{meunier2024optimal}.

\noindent
Now, for any $u \in \mathbb{R}^n$ we have
\[
ZZ^* u
=
\Bigg(
\sum_{j=1}^n
\sqrt{\beta(x_i)}\, k(x_i,x_j)\, \sqrt{\beta(x_j)}\, u_j
\Bigg)_{i=1}^n
=
\mathcal B^{1/2}\, \mathbf K\, \mathcal B^{1/2}\, u .
\]

\noindent
Substituting  $ZZ^* =\mathcal B^{1/2} \mathbf{K}\mathcal B^{1/2}$ in  (\ref{algo-4.3eq}), we obtain
\begin{center}
    $\widehat{f}_{\mathbf{z},\lambda}(x) 
    = \Big( \frac{1}{n} g_\lambda\!\big( \frac{1}{n}\mathcal B^{1/2} \mathbf{K}\mathcal B^{1/2} \big)\, Z\phi(x) \Big)\, Y'.$
\end{center}

\noindent
Note that
\begin{center}
    $Z\phi(x) = \big( \langle \phi(x), \phi(x_i)\sqrt{\beta(x_i)} \rangle_{\mathcal H} \big)_{i=1}^n 
    = \big( \sqrt{\beta(x_i)}\, k(x,x_i) \big)_{i=1}^n =\mathcal B^{1/2} k_{x,n}.$
\end{center}

\vspace{0.4em}

\noindent
Hence, the approximation can be written in the matrix form as
\begin{equation}\label{predictor}
    \widehat{f}_{\mathbf{z},\lambda}(x) 
    = \Big( \frac{1}{n} g_\lambda\!\big( \frac{1}{n}\mathcal B^{1/2} \mathbf{K}\mathcal B^{1/2} \big)\,\mathcal B^{1/2} k_{x,n} \Big)\, Y',
\end{equation}

\noindent
that finally leads to the required representation
\[    \widehat{f}_{\mathbf{z},\lambda}(x) = \sum_{i=1}^n \sqrt{\beta(x_i)}  y_i \, \alpha_i(x),\]
where the coefficient vector $\alpha(x) \in \mathbb{R}^n$ is given by
\begin{center}
    $\alpha(x) = \frac{1}{n}\, g_\lambda\!\Big(\frac{1}{n}\mathcal B^{1/2} \mathbf{K}\mathcal B^{1/2}\Big)\,\mathcal B^{1/2} k_{x,n}.$
\end{center}
\end{proof}

\noindent
{Note that assuming $\beta$ to be in some RKHS, one can  follow \cite{IWRLS1} (see also \cite{ALPHA-SELECTION}) and approximate the vector $(\beta(x_1),\ldots,\beta(x_n))$ by means of the so-called Kernel unconstrained Least-Squares Importance Fitting (KuLSIF), in which the corresponding approximation  is given as
\begin{equation}\label{beta}
    (\beta_{\alpha}(x_1),\ldots,\beta_{\alpha}(x_n)) = (\mathbf{K} + \alpha nI)^{-1}\,\bar{F},
\end{equation}
where 
\[
    \bar{F} = \bigg( \frac{n}{m} \sum_{i=1}^m k(x'_i, x_j) \bigg)_{j=1}^n,
\]
$\{x'_i\}_{i=1}^m$ are unlabeled inputs sampled from the marginal target distribution $q_X(x)$, and $k$ is not necessary the same as in (\ref{multk}).}

\section{LEARNING RATES AND ERROR BOUNDS}\label{sec:proof}
\noindent
In this section, we provide a theoretical analysis of the proposed framework for general regularization under a general source condition. 
Specifically, we establish \emph{optimal convergence rates} for the estimator in both the vRKHS-norm and the $L_2(q_X, Y)$-norm. 
Before stating the key assumptions of our analysis, we define the covariance operator
$C_X : \mathcal{H} \to \mathcal{H}$ by
\[
C_X = C_X^\beta
:= \mathbb{E}_{p}\!\left[(\phi(x)\otimes\phi(x))\,\beta(x)\right]
= \mathbb{E}_{q}\!\left[\phi(x)\otimes\phi(x)\right],
\]
where the expectations $\mathbb{E}_p$ and $\mathbb{E}_q$ are taken with respect to the
marginals $p_X$ and $q_X$, respectively.

\begin{assumption}\label{kernel-bound}
Let $k : X \times X \to \mathbb{R}$ be the reproducing kernel of a scalar-valued RKHS $\mathcal{H}$, satisfying
\[
k(x,x) \le \kappa^2 \quad \mathrm{for\ all}\ x \in X .
\]
\end{assumption}

\begin{assumption}\label{bridingassumption}
It is assumed that there exists a reweighting function $\beta : X \to [0,\infty)$ such that 
\[
dq_X(x) = \beta(x)\, dp_X(x),
\]
and a constant $b > 0$ for which $\beta$ is bounded, i.e.,
\[
\beta(x) \leq b, \quad \forall\, x \in X.
\]

\end{assumption}

\begin{assumption}\label{bound-Y-asssumption}
There exists a constant $M' > 0$ such that,  $\|y\|_Y \le M'$ for all $y \in Y$.

\end{assumption}

{
To analyze error bounds and establish faster learning rates for learning algorithms, it is crucial to exploit the smoothness of the target function, which is typically characterized through a source condition.  
From \cite{meunier2024optimal}, the classical Hölder source condition is encompassed by assuming that the target regression operator satisfies
\[
C_\ast \in \mathcal S_2(\mathcal H^\alpha, Y),
\quad \mathrm{for \, some } \; \alpha \ge 0 .
\]
Furthermore, by Lemma~\ref{Lemma:3}, the operator $C_\ast\in \mathcal S_2(\mathcal H^\alpha, Y)$ admits the representation
\[
C_\ast = C_o (C_X)^\alpha,
\]
for some operator $C_o \in \mathcal S_2(\mathcal H, Y)$.
Motivated by this representation, we introduce a general source condition of the form
\[
C_\ast = C_o \varphi(C_X),
\]
where $\varphi$ is a general index function (see~\cite{Gupta_2025}).
}

\begin{assumption}{(General source condition)}\label{assump:source}
{
We assume that the operator $C_\ast$ belongs to the Hilbert--Schmidt class
$\mathcal S_2(\mathcal H^{\varphi},Y)$, where $\varphi$ is the index function and  the space $\mathcal H^{\varphi}$ is defined as
\[
\mathcal H^{\varphi}
:= \left\{ f \in \mathcal H \;:\; f = \varphi(C_X) g \quad \mathrm{for \, some } \; g \in \mathcal H \right\}.
\]
}
\end{assumption}

\noindent
{
Note that the space $\mathcal H^{\varphi}$ is a Hilbert space when endowed with the inner product
\[
\big\langle \varphi(C_X) g_1,\, \varphi(C_X) g_2 \big\rangle_{\mathcal H^{\varphi}}
:= \langle g_1, g_2 \rangle_{\mathcal H},
\qquad g_1, g_2 \in \mathcal H.
\]
}
{
Consequently, the family $
\{\varphi(\mu_i) f_i\}_{i \ge 1}$
forms an orthonormal basis of $\mathcal H^{\varphi}$.
}

\begin{assumption}\label{eigendecay}
Let $\{(\mu_i,f_i)\}_{i\in\mathbb{N}}$ be the eigenpairs of $C_X$ with strictly positive eigenvalues $\mu_i>0$, and corresponding eigenfunctions $\{f_i\}_{i\in\mathbb{N}}$ forming an orthonormal basis of $\mathcal H$. Assume that for some $\mathbf{b} > 1$, the eigenvalues satisfy
\[
i^{-\mathbf{b}} \;\lesssim\; \mu_i \;\lesssim\; i^{-\mathbf{b}},
\qquad \forall\, i \in \mathbb{N}.
\]
\end{assumption}

\noindent
This assumption characterizes the spectral decay of the integral operator and allows us to control the \emph{effective dimension}
\[
\mathcal{N}(\lambda) 
:= 
{Tr}\!\left(C_X\big(C_X + \lambda I\big)^{-1}
\right),
\]
which, under the above condition, admits the bound
\begin{center}
$\mathcal{N}(\lambda) \;\lesssim\; \lambda^{-\frac{1}{\mathbf{b}}}.
$
\end{center}

\noindent
We next state several key lemmas that are required for our analysis. The proofs of some of these lemmas are provided in Section~\ref{Auxillary_results}.

\begin{lemma}\label{Lemma:3}
Let \(C \in \mathcal{S}_2(\mathcal{H},Y)\) and $\varphi$ be an index function. Then
\[
C \in \mathcal{S}_2(\mathcal{H}^{\varphi},Y)
\quad\iff\quad
\text{there exists an operator } \, C_o \in \mathcal{S}_2(\mathcal{H},Y)
\ \text{such that}\
C = C_o\,\varphi(C_X).
\] 
\end{lemma}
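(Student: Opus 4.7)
The plan is to work in the spectral representation of $C_X$ given by Assumption~\ref{eigendecay}. Write $\{(\mu_i, f_i)\}_{i\ge 1}$ for the eigenpairs, so that $\{f_i\}$ is an ONB of $\mathcal{H}$ with $\mu_i > 0$ and, as recalled in the preamble, $\{\varphi(\mu_i) f_i\}_{i\ge 1}$ is an ONB of $\mathcal{H}^\varphi$. Since $\varphi$ is a strictly increasing index function with $\varphi(0)=0$, we have $\varphi(\mu_i) > 0$ for every $i$, so $\varphi(C_X)$ is injective on $\mathcal{H}$ and admits an inverse on its range. The whole argument then reduces to translating both conditions into spectral series expressed in terms of $\{\|C f_i\|_Y^2\}$ and $\{\varphi(\mu_i)\}$, and matching them.

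For the $(\Leftarrow)$ direction, suppose $C = C_o\,\varphi(C_X)$ with $C_o \in \mathcal{S}_2(\mathcal{H},Y)$. Evaluating $C$ on each ONB element of $\mathcal{H}^\varphi$ gives $C(\varphi(\mu_i) f_i) = \varphi(\mu_i)^2\,C_o f_i$, so the squared HS-norm
\[
\|C\|_{\mathcal{S}_2(\mathcal{H}^\varphi, Y)}^2 \;=\; \sum_i \varphi(\mu_i)^4\,\|C_o f_i\|_Y^2
\]
is bounded by $\varphi(\kappa^2)^4\,\|C_o\|_{\mathcal{S}_2(\mathcal{H}, Y)}^2 < \infty$, using $\mu_i \le \kappa^2$ (from Assumption~\ref{kernel-bound}) and the monotonicity of $\varphi$. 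Hence $C \in \mathcal{S}_2(\mathcal{H}^\varphi, Y)$.

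For the $(\Rightarrow)$ direction, assume $C \in \mathcal{S}_2(\mathcal{H}^\varphi, Y)$ and construct $C_o$ explicitly by setting
\[
C_o f_i \;:=\; \frac{C f_i}{\varphi(\mu_i)}, \qquad i \ge 1,
\]
extended by linearity to $\mathcal{H}$ (equivalently, $C_o = C \circ \varphi(C_X)^{-1}$ on the range of $\varphi(C_X)$). By construction, $C_o\,\varphi(C_X) f_i = \varphi(\mu_i)\cdot(Cf_i/\varphi(\mu_i)) = Cf_i$ for every $i$, which extends to the identity $C_o\,\varphi(C_X) = C$ on all of $\mathcal{H}$ by linearity and continuity. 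What remains --- and is the main obstacle --- is verifying $C_o \in \mathcal{S}_2(\mathcal{H}, Y)$, i.e.\ that $\sum_i \|C f_i\|_Y^2/\varphi(\mu_i)^2 < \infty$. This convergence is to be extracted from the hypothesis by unpacking the defining identity $\langle \varphi(C_X) g_1, \varphi(C_X) g_2\rangle_{\mathcal{H}^\varphi} := \langle g_1, g_2\rangle_\mathcal{H}$: it asserts that $\varphi(C_X)$ is an isometric isomorphism $\mathcal{H} \to \mathcal{H}^\varphi$, and this isometry transports the HS-norm of $C$ on $\mathcal{H}^\varphi$ to exactly $\|C_o\|_{\mathcal{S}_2(\mathcal{H},Y)}$, closing the argument. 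The delicate bookkeeping of inner products between $\mathcal{H}$ and $\mathcal{H}^\varphi$ is where the bulk of the work sits.
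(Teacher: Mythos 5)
Your two directions use incompatible readings of the norm on $\mathcal{S}_2(\mathcal{H}^{\varphi},Y)$, and the hard half of the equivalence is not actually carried out. In the $(\Leftarrow)$ direction you compute $\|C\|^2_{\mathcal{S}_2(\mathcal{H}^{\varphi},Y)}=\sum_i\|C(\varphi(\mu_i)f_i)\|_Y^2=\sum_i\varphi(\mu_i)^4\|C_of_i\|_Y^2$, i.e.\ you treat $C$ as the restriction $C|_{\mathcal{H}^{\varphi}}$ and sum $\|Ce_i\|_Y^2$ over the orthonormal basis $e_i=\varphi(\mu_i)f_i$. Under that reading the lemma is false: since $\varphi(\mu_i)$ is bounded, every $C\in\mathcal{S}_2(\mathcal{H},Y)$ satisfies $\sum_i\varphi(\mu_i)^2\|Cf_i\|_Y^2<\infty$ and would lie in $\mathcal{S}_2(\mathcal{H}^{\varphi},Y)$, yet not every such $C$ factors as $C_o\,\varphi(C_X)$ with $C_o$ Hilbert--Schmidt (take $\|Cf_i\|_Y^2=\varphi(\mu_i)^2$ with $\sum_i\varphi(\mu_i)^2<\infty$; then $\sum_i\|Cf_i\|_Y^2/\varphi(\mu_i)^2$ diverges). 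The convention the paper actually uses, consistent with the identification $\mathcal{S}_2(\mathcal{H}^{\varphi},Y)\cong Y\otimes\mathcal{H}^{\varphi}$ and with $\|f_l\|_{\mathcal{H}^{\varphi}}=1/\varphi(\mu_l)$, is that for $C=\sum_{k,l}a_{k,l}\,d_k\otimes f_l$ one has $\|C\|^2_{\mathcal{S}_2(\mathcal{H}^{\varphi},Y)}=\sum_{k,l}a_{k,l}^2/\varphi^2(\mu_l)=\sum_l\|Cf_l\|_Y^2/\varphi^2(\mu_l)$ --- division by $\varphi^2(\mu_l)$, not multiplication. With that formula both implications reduce to the observation that $C_o:=\sum_{k,l}\bigl(a_{k,l}/\varphi(\mu_l)\bigr)d_k\otimes f_l$ has $\|C_o\|_{\mathcal{S}_2(\mathcal{H},Y)}=\|C\|_{\mathcal{S}_2(\mathcal{H}^{\varphi},Y)}$, which is exactly the paper's two-line proof.

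In the $(\Rightarrow)$ direction you correctly isolate the crux, $\sum_i\|Cf_i\|_Y^2/\varphi(\mu_i)^2<\infty$, but you do not prove it: you assert that the isometry $\varphi(C_X):\mathcal{H}\to\mathcal{H}^{\varphi}$ ``transports the HS-norm of $C$ on $\mathcal{H}^{\varphi}$ to exactly $\|C_o\|_{\mathcal{S}_2(\mathcal{H},Y)}$.'' Composing the restriction $C|_{\mathcal{H}^{\varphi}}$ with that isometry produces $C\circ\varphi(C_X)$, whose Hilbert--Schmidt norm is $\sum_i\varphi(\mu_i)^2\|Cf_i\|_Y^2$; what you need is the norm of $C_o=C\circ\varphi(C_X)^{-1}$, which is a different operator and a different (generally infinite, under your norm convention) quantity. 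The step you defer as ``delicate bookkeeping'' is the entire content of the lemma, and under the norm you adopted in the other direction it is not merely unproven but false. The repair is to fix the meaning of $\|\cdot\|_{\mathcal{S}_2(\mathcal{H}^{\varphi},Y)}$ to the tensor-product one above, after which the exact isometry $\|C\|_{\mathcal{S}_2(\mathcal{H}^{\varphi},Y)}=\|C_o\|_{\mathcal{S}_2(\mathcal{H},Y)}$ holds by inspection of the coefficients and both directions follow at once.
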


\begin{lemma}\label{Lemma:1}

Suppose that assumptions~\ref{kernel-bound} and~\ref{bridingassumption} hold. Then, with probability at least \(1-\delta\), we have    \[
    \big\|(C_{X}+\lambda I)^{-1/2} (C_X -\widehat{C}_X^\beta)\big\|_{op} \leq 2\left(\frac{2b\kappa^2}{\sqrt \lambda\, n} +\sqrt{\frac{b\kappa^2\mathcal{N}(\lambda)}{n}} \right) \log \left(\frac{2}{\delta}\right).
\]
\end{lemma}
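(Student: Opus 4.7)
The plan is to apply a Bernstein-type concentration inequality for Hilbert--Schmidt operator valued random variables to the centered empirical summands. Since $(C_X+\lambda I)^{-1/2}$ is deterministic, I would pull it inside the sum and write
\[
(C_X+\lambda I)^{-1/2}\bigl(C_X-\widehat{C}_X^\beta\bigr)
=\frac{1}{n}\sum_{i=1}^n\bigl(\mathbb{E}\xi_i-\xi_i\bigr),
\qquad
\xi_i:=(C_X+\lambda I)^{-1/2}\,\beta(x_i)\,\phi(x_i)\otimes\phi(x_i),
\]
so that the $\xi_i$ are i.i.d.\ and $\mathbb{E}\xi_1=(C_X+\lambda I)^{-1/2}C_X$. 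Since the operator norm is dominated by the Hilbert--Schmidt norm, it suffices to control $\|\xi_i\|_{\mathrm{HS}}$ almost surely and $\mathbb{E}\|\xi_i\|_{\mathrm{HS}}^2$.

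For the almost sure bound, I would use the identity $(\phi(x)\otimes\phi(x))A(\phi(x)\otimes\phi(x))=\langle\phi(x),A\phi(x)\rangle\,\phi(x)\otimes\phi(x)$ to compute
\[
\|\xi_i\|_{\mathrm{HS}}^2
=\beta(x_i)^2\,\|\phi(x_i)\|^2\,\bigl\langle\phi(x_i),(C_X+\lambda I)^{-1}\phi(x_i)\bigr\rangle.
\]
Assumptions~\ref{kernel-bound} and~\ref{bridingassumption} give $\|\phi(x)\|^2=k(x,x)\le\kappa^2$, $\beta\le b$, and $\|(C_X+\lambda I)^{-1/2}\|_{op}\le\lambda^{-1/2}$, whence $\|\xi_i\|_{\mathrm{HS}}\le b\kappa^2/\sqrt{\lambda}$. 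For the variance, after dropping one factor of $\beta(x)\le b$ and one factor of $\|\phi(x)\|^2\le\kappa^2$, I get
\[
\mathbb{E}\|\xi_i\|_{\mathrm{HS}}^2
\le b\kappa^2\,\mathbb{E}_p\bigl[\beta(x)\langle\phi(x),(C_X+\lambda I)^{-1}\phi(x)\rangle\bigr]
= b\kappa^2\,\mathrm{Tr}\!\bigl((C_X+\lambda I)^{-1}C_X\bigr)
= b\kappa^2\,\mathcal N(\lambda),
\]
where I used the cyclic property of trace together with the definition $C_X=\mathbb{E}_p[\beta(x)\phi(x)\otimes\phi(x)]$ from the paragraph preceding Assumption~\ref{kernel-bound}.

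With the quantities $L=b\kappa^2/\sqrt{\lambda}$ and $\sigma^2=b\kappa^2\mathcal N(\lambda)$ in hand, the claim follows from the standard Bernstein inequality for mean-zero random elements in a separable Hilbert space (in the Hilbert--Schmidt space of operators on $\mathcal H$), for instance in the Pinelis--Sakhanenko / Caponnetto--De Vito form, which yields with probability at least $1-\delta$
\[
\Bigl\|\tfrac{1}{n}\sum_{i=1}^n(\xi_i-\mathbb{E}\xi_i)\Bigr\|_{op}
\le 2\!\left(\frac{2L}{n}+\sqrt{\frac{\sigma^2}{n}}\right)\log\!\left(\frac{2}{\delta}\right),
\]
which upon substitution is exactly the stated bound. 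I do not expect any serious obstacle here; the only non-trivial points are the correct algebraic manipulation of $(\phi(x)\otimes\phi(x))$ when computing $\|\xi_i\|_{\mathrm{HS}}$, and the observation that the weighted second moment collapses to $\mathrm{Tr}((C_X+\lambda I)^{-1}C_X)=\mathcal N(\lambda)$ precisely because $C_X$ is defined using the $\beta$-reweighting.
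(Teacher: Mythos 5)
Your proposal is correct and follows essentially the same route as the paper: the same random variable $\xi(x)=(C_X+\lambda I)^{-1/2}\beta(x)\,\phi(x)\otimes\phi(x)$, the same almost-sure bound $b\kappa^2/\sqrt{\lambda}$, the same second-moment bound $b\kappa^2\mathcal N(\lambda)$ (the paper computes it via the eigenexpansion $\sum_l \mu_l/(\lambda+\mu_l)$ rather than your trace identity, but these are the same calculation), and the same Pinelis-type concentration inequality (Lemma~\ref{PINELIS:INEQUALITY}). The only cosmetic difference is how the constant $L$ is normalized before plugging into the concentration bound, and the final estimate coincides with the stated one.
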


\begin{lemma}\label{Lemma:2}
Suppose that assumptions~\ref{kernel-bound} and~\ref{bridingassumption} hold. Then, with probability at least \(1-\delta\), the following operator norm bound holds:

\[
\left\|(C_X+\lambda I)(\widehat{C}_X^{\beta}+\lambda I)^{-1}\right\|_{{op}}
\;\leq\;
2\left[
\left(
\frac{\mathcal{B}_{n,\lambda}\,\log\!\left(\frac{2}{\delta}\right)}{\sqrt{\lambda}}
\right)^2
+1
\right],
\]
where
\[
\mathcal{B}_{n,\lambda}
\;=\;
2\left(
\frac{2b\kappa^2}{\sqrt{\lambda}\,n}
+
\sqrt{\frac{b\kappa^2\mathcal{N}(\lambda)}{n}}
\right).
\]
\end{lemma}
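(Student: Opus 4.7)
The plan is to reduce the bound on $\|(C_X+\lambda I)(\widehat{C}_X^{\beta}+\lambda I)^{-1}\|_{op}$ to the quantity controlled by Lemma~\ref{Lemma:1} via a self-bounding argument. Set $A:=C_X+\lambda I$, $\hat{A}:=\widehat{C}_X^{\beta}+\lambda I$, and $T:=\mathcal{B}_{n,\lambda}\log(2/\delta)$, so that Lemma~\ref{Lemma:1} furnishes, with probability at least $1-\delta$, the estimate $\|A^{-1/2}(C_X-\widehat{C}_X^{\beta})\|_{op}\le T$. Both $A$ and $\hat{A}$ are self-adjoint and bounded below by $\lambda I$ (since $\widehat{C}_X^{\beta}$ is a sum of positive rank-one operators), so fractional powers and inverses are well-defined and $\|\hat{A}^{-1}\|_{op}\le 1/\lambda$.

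First, I would exploit the algebraic identity
\[
A\hat{A}^{-1}=I+(C_X-\widehat{C}_X^{\beta})\hat{A}^{-1},
\]
which, setting $M:=\|A\hat{A}^{-1}\|_{op}$, immediately yields $M\le 1+\|(C_X-\widehat{C}_X^{\beta})\hat{A}^{-1}\|_{op}$. I would then insert $I=A^{-1/2}A^{1/2}$ to factor
\[
(C_X-\widehat{C}_X^{\beta})\hat{A}^{-1} = \bigl[(C_X-\widehat{C}_X^{\beta})A^{-1/2}\bigr]\,\bigl[A^{1/2}\hat{A}^{-1}\bigr],
\]
bound the first bracket by $T$ via Lemma~\ref{Lemma:1} (after passing to the adjoint, which preserves the operator norm because both factors are self-adjoint), and bound the second through the identity $\|A^{1/2}\hat{A}^{-1}\|_{op}^{2}=\|\hat{A}^{-1}A\hat{A}^{-1}\|_{op}\le \|\hat{A}^{-1}\|_{op}\,M\le M/\lambda$. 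Combining these ingredients gives the self-bounding inequality
\[
M \;\le\; 1+\frac{T}{\sqrt{\lambda}}\sqrt{M}.
\]

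The final step is to solve this inequality. Setting $x=\sqrt{M}$ reduces it to $x^{2}-(T/\sqrt{\lambda})x-1\le 0$, whose positive root obeys
\[
x\;\le\;\tfrac{1}{2}\Bigl(\tfrac{T}{\sqrt{\lambda}}+\sqrt{\tfrac{T^{2}}{\lambda}+4}\Bigr)\;\le\;\tfrac{T}{\sqrt{\lambda}}+1,
\]
using $\sqrt{a+b}\le\sqrt{a}+\sqrt{b}$. Squaring and absorbing the cross term via $2T/\sqrt{\lambda}\le T^{2}/\lambda+1$ delivers $M\le 2\bigl[T^{2}/\lambda+1\bigr]$, which is exactly the stated bound.

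The main obstacle is calibrating constants tightly enough to recover the precise form $2[(T/\sqrt{\lambda})^{2}+1]$: any slack introduced in the AM-GM-type absorption or in the estimate $\|A^{1/2}\hat{A}^{-1}\|_{op}^{2}\le M/\lambda$ would enlarge the leading constant and spoil the announced inequality. Apart from this bookkeeping, the argument is a routine self-bounding manipulation in the style of Caponnetto--De~Vito, with Lemma~\ref{Lemma:1} furnishing the sole stochastic input and the positivity of $\widehat{C}_X^{\beta}$ guaranteeing the deterministic bound $\|\hat{A}^{-1}\|_{op}\le 1/\lambda$ used throughout.
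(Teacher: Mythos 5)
Your proof is correct, and it is essentially a self-contained reconstruction of the argument that the paper outsources to a citation. The paper's own proof proceeds in two steps: it first bounds the \emph{reversed} product $\|(C_X+\lambda I)^{-1}(\widehat{C}_X^{\beta}+\lambda I)\|_{op}\le \mathcal{B}_{n,\lambda}\log(2/\delta)/\sqrt{\lambda}+1$ via the identity $A^{-1}B=A^{-1}(B-A)+I$ together with Lemma~\ref{Lemma:1}, and then simply invokes ``the analysis of Proposition~1'' of the cited distributed-learning paper to pass to the target quantity $\|(C_X+\lambda I)(\widehat{C}_X^{\beta}+\lambda I)^{-1}\|_{op}$ with the constant $2[(\cdot)^2+1]$. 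You instead attack the target quantity directly through the self-bounding inequality $M\le 1+(T/\sqrt{\lambda})\sqrt{M}$, obtained from the same first-order expansion plus the clean observation $\|A^{1/2}\hat{A}^{-1}\|_{op}^2=\|\hat{A}^{-1}A\hat{A}^{-1}\|_{op}\le M/\lambda$, and then solve the quadratic; the chain $(p+1)^2\le 2(p^2+1)$ recovers the stated constant exactly. All the individual steps check out: the adjoint swap to apply Lemma~\ref{Lemma:1}, the bound $\|\hat{A}^{-1}\|_{op}\le 1/\lambda$ from positivity of $\widehat{C}_X^{\beta}$, and the a priori finiteness of $M$ (needed to make the self-bounding argument legitimate) all hold, the last because $C_X$ is bounded under Assumptions~\ref{kernel-bound} and~\ref{bridingassumption}. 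The only substantive difference is expository: your version makes the lemma independent of the external reference, at the cost of the slightly longer quadratic-inequality bookkeeping, and both versions consume exactly one application of Lemma~\ref{Lemma:1} as the sole stochastic input.
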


\begin{lemma}\label{lemma:5}
Let $0<l<1$ and $0<\delta<1$. Assume that assumptions~\ref{kernel-bound} and~\ref{bridingassumption} hold, and that \(\lambda > 0\) satisfies
\begin{equation}\label{eq:condition_on_lambda}
\mathcal{B}_{n,\lambda}
\log\!\left(\frac{2}{\delta}\right)\leq \frac{\sqrt{\lambda}}{2}.
\end{equation}
Then, with probability at least $1-\delta$, the following bound holds:
\[
\left\|
(C_X+\lambda I)^l(\widehat{C}^\beta_X+\lambda I)^{-l}
\right\|_{{op}}
\;\le\;
2^{\,l}.
\]
\end{lemma}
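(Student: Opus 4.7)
The plan is to lift the one-sided perturbation bound of Lemma~\ref{Lemma:1}, together with the hypothesis \eqref{eq:condition_on_lambda}, into a two-sided Loewner comparison between $C_X+\lambda I$ and $\widehat{C}_X^\beta+\lambda I$, and then raise this comparison to the $l$-th power via the Löwner--Heinz operator monotonicity theorem.

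First, I would apply Lemma~\ref{Lemma:1}: with probability at least $1-\delta$, $\|(C_X+\lambda I)^{-1/2}(C_X-\widehat{C}_X^\beta)\|_{op}\le \mathcal{B}_{n,\lambda}\log(2/\delta)$. Multiplying on the right by $(C_X+\lambda I)^{-1/2}$, using the spectral bound $\|(C_X+\lambda I)^{-1/2}\|_{op}\le \lambda^{-1/2}$, and invoking the standing hypothesis $\mathcal{B}_{n,\lambda}\log(2/\delta)\le \sqrt{\lambda}/2$, I would obtain
\[
\|(C_X+\lambda I)^{-1/2}(\widehat{C}_X^\beta-C_X)(C_X+\lambda I)^{-1/2}\|_{op}\;\le\;\tfrac12.
\]
Since the operator inside is self-adjoint, this norm bound is equivalent to the two-sided Loewner sandwich $-\tfrac12 I\preceq (C_X+\lambda I)^{-1/2}(\widehat{C}_X^\beta-C_X)(C_X+\lambda I)^{-1/2}\preceq \tfrac12 I$. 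Adding $I$ throughout and conjugating by $(C_X+\lambda I)^{1/2}$ produces $\tfrac12(C_X+\lambda I)\preceq \widehat{C}_X^\beta+\lambda I$, and in particular the operator inequality $C_X+\lambda I\preceq 2(\widehat{C}_X^\beta+\lambda I)$.

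Next, by the Löwner--Heinz theorem, the map $x\mapsto x^l$ is operator monotone on $[0,\infty)$ for every $l\in(0,1]$, so the preceding inequality lifts to $(C_X+\lambda I)^l\preceq 2^l(\widehat{C}_X^\beta+\lambda I)^l$. Sandwiching by $(\widehat{C}_X^\beta+\lambda I)^{-l/2}$ on both sides and taking operator norms yields
\[
\|(\widehat{C}_X^\beta+\lambda I)^{-l/2}(C_X+\lambda I)^l(\widehat{C}_X^\beta+\lambda I)^{-l/2}\|_{op}\;\le\;2^l.
\]
Recognising the left-hand side as $T^*T$ for $T=(C_X+\lambda I)^{l/2}(\widehat{C}_X^\beta+\lambda I)^{-l/2}$ and applying the identity $\|T\|_{op}^2=\|T^*T\|_{op}$ concludes the argument.

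The main obstacle is that, because $C_X$ and $\widehat{C}_X^\beta$ do not commute, a Loewner inequality $A\preceq 2B$ does \emph{not} in general imply the operator norm bound $\|AB^{-1}\|_{op}\le 2$ (Lemma~\ref{Lemma:2} in fact only delivers the weaker constant $5/2$ under the same hypothesis). This forces the argument to be routed through the symmetric sandwich and the Löwner--Heinz step, whose operator-monotonicity input is precisely what constrains the admissible exponents to $(0,1)$ and matches the hypothesis of the lemma; a shortcut via Cordes-type inequalities starting from Lemma~\ref{Lemma:2} would produce the suboptimal constant $(5/2)^l$ instead of $2^l$.
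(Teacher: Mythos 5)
Your derivation of the two-sided Loewner comparison $\tfrac12(C_X+\lambda I)\preceq \widehat{C}_X^\beta+\lambda I$, and hence $C_X+\lambda I\preceq 2(\widehat{C}_X^\beta+\lambda I)$, is correct, but the final step does not deliver the stated bound. After L\"owner--Heinz you have $(C_X+\lambda I)^l\preceq 2^l(\widehat{C}_X^\beta+\lambda I)^l$, and your sandwich/$T^*T$ argument turns this into
\[
\bigl\|(C_X+\lambda I)^{l/2}(\widehat{C}_X^\beta+\lambda I)^{-l/2}\bigr\|_{op}\le 2^{l/2},
\]
i.e.\ the claim of the lemma for the exponent $l/2$, not $l$. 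Re-indexing, your argument proves the lemma only for exponents in $(0,\tfrac12]$. To reach an exponent $l\in(\tfrac12,1)$ this way you would need $(C_X+\lambda I)^{2l}\preceq 2^{2l}(\widehat{C}_X^\beta+\lambda I)^{2l}$, which requires operator monotonicity of $t\mapsto t^{2l}$ with $2l>1$; this fails, and in general $A\preceq 2B$ does \emph{not} imply $\|A^{l}B^{-l}\|_{op}\le 2^{l}$ for $l>1/2$ when $A$ and $B$ do not commute. So the gap is not cosmetic: the Loewner comparison you establish is genuinely too weak an intermediate statement for half of the admissible range of $l$.

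The paper instead proves the non-symmetric bound $\|(C_X+\lambda I)(\widehat{C}_X^\beta+\lambda I)^{-1}\|_{op}\le 2$ directly: Lemma~\ref{Lemma:1} and \eqref{eq:condition_on_lambda} give $\|(\widehat{C}_X^\beta-C_X)(C_X+\lambda I)^{-1}\|_{op}\le \tfrac12$, one writes $(C_X+\lambda I)(\widehat{C}_X^\beta+\lambda I)^{-1}=\bigl[I+(\widehat{C}_X^\beta-C_X)(C_X+\lambda I)^{-1}\bigr]^{-1}$, and the Neumann series yields the constant $2$ (not the $5/2$ of Lemma~\ref{Lemma:2}, which is a different and weaker estimate). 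The Cordes inequality $\|A^{s}B^{s}\|_{op}\le\|AB\|_{op}^{s}$ for $0\le s\le 1$ then gives $2^{l}$ for every $l\in(0,1)$. Your closing claim that the Cordes route necessarily produces the suboptimal constant $(5/2)^{l}$ is therefore mistaken: that only happens if one enters through the final bound of Lemma~\ref{Lemma:2} rather than through the sharper Neumann-series estimate. If you wish to retain your L\"owner--Heinz argument, it is valid for $l\le 1/2$, but you must supplement it with the Neumann--Cordes argument (or replace it entirely) to cover $l\in(1/2,1)$.
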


\begin{lemma}\label{L_2(lemma-A)}
Suppose that {assumptions~\ref{kernel-bound}--\ref{assump:source}} are satisfied.
If  the qualification $\nu$ of the regularization family covers the index function \(\varphi(t)\sqrt{t}\) and \eqref{eq:condition_on_lambda} holds,
then with probability at least \(1-\delta\), we have

\[\big\|(C_*-\bar{C}_{\lambda}^\beta)\, C_X^{1/2} \big\|_{\mathcal S_2(\mathcal H,Y)} \leq\;
\left(
\frac{A_1}{n}\sqrt{\frac{1}{\lambda}}
+ A_2\sqrt{\frac{\mathcal{N}(\lambda)}{n}}
\right) \log\left(\frac{2}{\delta}\right)
+ A_3\,\sqrt{\lambda}\varphi(\lambda).\]

Here $\bar{C}_{\lambda}^{\beta}
= C_* \widehat{C}_X^{\beta} g_{\lambda}(\widehat{C}_X^{\beta})$,
and the constants $A_1, A_2, A_3$ are given by
\begin{align*}
A_1 &:= 2^{\,2\nu-\bar{\nu}+2}(\gamma_\nu+\gamma)\,
      \max\!\left(1,\frac{1}{c}\right)\,
      \bar{\nu}\, b\, \kappa^2\,
      \| C_{*}\|_{\mathcal S_2(\mathcal H,Y)},\\[4pt]
A_2 &:= 2^{\,2\nu-\bar{\nu}+1}(\gamma_\nu+\gamma)\,
      \max\!\left(1,\frac{1}{c}\right)\,
      \bar{\nu}\,\sqrt{b}\,\kappa\,
      \|C_{*}\|_{\mathcal S_2(\mathcal H,Y)},\\[4pt]
A_3 &:= 2^{\,2\nu-\bar{\nu}}(\gamma_\nu+\gamma)\,
      \max\!\left(1,\frac{1}{c}\right)\,
      \|C_{o}\|_{\mathcal S_2(\mathcal H,Y)},\\[4pt]
\end{align*}
where $\bar{\nu} := \lfloor \nu -1/2 \rfloor$ denotes the greatest integer
less than or equal to~$\nu -1/2$.

\end{lemma}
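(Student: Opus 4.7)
The plan is to rewrite the error via the residual function of the regularization family, $r_\lambda(\sigma) := 1 - \sigma\, g_\lambda(\sigma)$, so that
\[
C_* - \bar{C}_\lambda^\beta
\;=\; C_* - C_* \widehat{C}_X^\beta g_\lambda(\widehat{C}_X^\beta)
\;=\; C_*\, r_\lambda(\widehat{C}_X^\beta).
\]
Invoking the source representation $C_* = C_o\,\varphi(C_X)$ guaranteed by Lemma~\ref{Lemma:3}, the quantity to control becomes $\|C_o\,\varphi(C_X)\,r_\lambda(\widehat{C}_X^\beta)\,C_X^{1/2}\|_{\mathcal S_2(\mathcal H,Y)}$. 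Since $C_X$ and $\widehat{C}_X^\beta$ do not commute, the first step is to split
\[
\varphi(C_X)\,r_\lambda(\widehat{C}_X^\beta)\,C_X^{1/2}
\;=\; \varphi(\widehat{C}_X^\beta)\,r_\lambda(\widehat{C}_X^\beta)\,C_X^{1/2}
\;+\;\bigl[\varphi(C_X) - \varphi(\widehat{C}_X^\beta)\bigr]\,r_\lambda(\widehat{C}_X^\beta)\,C_X^{1/2},
\]
separating a clean approximation bias from a stochastic perturbation.

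For the \emph{bias} term I would insert $(\widehat{C}_X^\beta+\lambda I)^{-1/2}(C_X+\lambda I)^{1/2}$ to migrate $C_X^{1/2}$ to $(\widehat{C}_X^\beta)^{1/2}$, paying a factor $2^{1/2}$ by Lemma~\ref{lemma:5} with $l=\tfrac12$. Spectral calculus on $\widehat{C}_X^\beta$ then reduces the operator norm to $\sup_\sigma |r_\lambda(\sigma)\,\varphi(\sigma)\,\sqrt{\sigma}|$, and the qualification-covering assumption applied to the index function $\varphi(t)\sqrt{t}$ bounds this supremum by a constant multiple of $\sqrt{\lambda}\,\varphi(\lambda)$. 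Multiplying by $\|C_o\|_{\mathcal S_2(\mathcal H,Y)}$ yields precisely the $A_3$ contribution, with the prefactor $(\gamma_\nu+\gamma)\max(1,1/c)$ accounting for the covering constant and the uniform residual bound $\sup|r_\lambda|\le\gamma$.

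For the \emph{perturbation} term the plan is to precondition with $(C_X+\lambda I)^{1/2}$ factors and then express $\varphi(C_X)-\varphi(\widehat{C}_X^\beta)$ through the telescoping identity $A^{\bar\nu}-B^{\bar\nu}=\sum_{j=0}^{\bar\nu-1}A^{\,j}(A-B)B^{\bar\nu-1-j}$ at the integer exponent $\bar\nu=\lfloor\nu-\tfrac12\rfloor$, lifted to $\varphi$ via the covering inequality (which majorizes $\varphi(\sigma)$ by $\sigma^{\nu}$ on the relevant range). Each term of the telescope is rearranged so that a single factor $(C_X+\lambda I)^{-1/2}(C_X-\widehat{C}_X^\beta)$ appears, which Lemma~\ref{Lemma:1} bounds with probability $1-\delta$ by $\mathcal{B}_{n,\lambda}\log(2/\delta)$. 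The remaining ratios $(C_X+\lambda I)^{l}(\widehat{C}_X^\beta+\lambda I)^{-l}$ with $l\in(0,1]$ are absorbed through Lemmas~\ref{Lemma:2} and~\ref{lemma:5}, each invocation contributing a factor $2^{l}$; the bounds $\beta\le b$ and $k(x,x)\le\kappa^2$ provide the $b\kappa^2$ dependence, while $\|C_*\|_{\mathcal S_2(\mathcal H,Y)}$ enters through the final step of the telescope. Splitting $\mathcal{B}_{n,\lambda}$ into its $1/(\sqrt{\lambda}n)$ and $\sqrt{\mathcal N(\lambda)/n}$ components yields the $A_1$ and $A_2$ contributions, and the power $2^{2\nu-\bar\nu+\cdot}$ records the telescope length together with the $\bar\nu$ applications of Lemma~\ref{lemma:5}.

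The main obstacle, in my view, is exactly this perturbation analysis for a \emph{general} index function $\varphi$: unlike an integer power, $\varphi$ admits no direct commutator expansion. The covering condition is the crucial device that reduces the general case to the polynomial one, by allowing $\varphi$ to be majorized by $\sigma^{\nu}$ up to the constant $c$; only then can the telescoping over $\bar\nu$ integer powers be executed cleanly and the final bound made to depend on $\varphi$ solely through $\varphi(\lambda)$. Once the three pieces are collected, absorbing $\|C_o\|_{\mathcal S_2(\mathcal H,Y)}$ and $\|C_*\|_{\mathcal S_2(\mathcal H,Y)}$ into the constants $A_1,A_2,A_3$ delivers the stated inequality.
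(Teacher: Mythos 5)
Your opening moves match the paper: writing $C_*-\bar C_\lambda^\beta=C_*\,r_\lambda(\widehat C_X^\beta)$ with $r_\lambda(\sigma)=1-\sigma g_\lambda(\sigma)$, invoking Lemma~\ref{Lemma:3} to get $C_*=C_o\varphi(C_X)$, trading $C_X^{1/2}$ for $(\widehat C_X^\beta+\lambda I)^{1/2}$ at the cost of $\sqrt2$ via Lemma~\ref{lemma:5}, and ending with a bias of order $\sqrt\lambda\,\varphi(\lambda)$ plus a stochastic part driven by Lemma~\ref{Lemma:1}. However, your central decomposition $\varphi(C_X)r_\lambda(\widehat C_X^\beta)=\varphi(\widehat C_X^\beta)r_\lambda(\widehat C_X^\beta)+[\varphi(C_X)-\varphi(\widehat C_X^\beta)]r_\lambda(\widehat C_X^\beta)$ is not the paper's, and the step you yourself flag as the main obstacle does fail. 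The covering condition~\eqref{eq:cover} is a statement about the near-monotonicity of the ratio $\sigma^\nu/\varphi(\sigma)$; it does \emph{not} give a pointwise majorization $\varphi(\sigma)\lesssim\sigma^\nu$ (for $\varphi(t)=t^r$ with $r<\nu$ the opposite inequality holds near $0$). And even granting such a majorization, an operator-function difference $\varphi(A)-\varphi(B)$ cannot be bounded by the difference of polynomial majorants $A^{\bar\nu}-B^{\bar\nu}$: the telescoping identity you invoke is an algebraic identity for integer powers and does not ``lift'' to a general index function. Since the lemma assumes no operator-Lipschitz or splitting structure on $\varphi$, your perturbation term is left uncontrolled.

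The paper sidesteps this by never forming $\varphi(\widehat C_X^\beta)$. It keeps $\varphi$ glued to $C_X$ inside $(C_*)^*=\varphi(C_X)C_o^*$, uses the qualification only on the empirical operator to extract $\bigl\|r_\lambda(\widehat C_X^\beta)(\widehat C_X^\beta+\lambda I)^{\nu}\bigr\|_{op}\le 2^\nu\lambda^\nu(\gamma_\nu+\gamma)$, and is then left with $\lambda^{\nu'}\bigl\|(\widehat C_X^\beta+\lambda I)^{-\nu'}(C_*)^*\bigr\|$ with $\nu'=\nu-\tfrac12$. The fractional part $\nu'-\bar\nu$ is exchanged for the population resolvent via Lemma~\ref{lemma:5}, and the integer part is handled by the \emph{exact} resolvent-difference identity of Lemma~\ref{Naveen-acha:lemma} applied to $(\widehat C_X^\beta+\lambda I)^{-\bar\nu}-(C_X+\lambda I)^{-\bar\nu}$; this is where the factor $\bar\nu$, the concentration bound of Lemma~\ref{Lemma:1}, and $\|C_*\|_{\mathcal S_2(\mathcal H,Y)}$ (not $\|C_o\|$) enter $A_1,A_2$. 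The covering condition is applied only at the very end, to the commuting combination $\lambda^{\nu'}(C_X+\lambda I)^{-\nu'}\varphi(C_X)$, which is where $\max(1,1/c)\,\varphi(\lambda)$ and hence $A_3$ come from. To repair your argument you would need to adopt this detour (or impose an additional operator-monotone/Lipschitz splitting of $\varphi$, which the lemma does not assume).
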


\begin{lemma}\label{L2:bound-B}
Suppose that assumptions~\ref{kernel-bound}--\ref{assump:source} are satisfied and that equation~\eqref{eq:condition_on_lambda} holds. Then, with probability at least \(1-\delta\), we have
\begin{align*}
\bigl\| ( \widehat{C}_\lambda^\beta-\bar{C}_{\lambda}^\beta)\, C_X^{1/2} \bigr\|_{\mathcal S_2(\mathcal H,Y)}
\;&\leq\;
(B+D)\left(
\frac{10bM\kappa}{n\sqrt{\lambda}}
+ 5
M\sqrt{
\frac{b \; \mathcal{N}(\lambda)}{n}
}
\right)\log\!\left( \frac{2}{\delta} \right).
\end{align*}
\end{lemma}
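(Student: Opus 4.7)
The plan is to decompose $\widehat{C}_{\lambda}^{\beta} - \bar{C}_{\lambda}^{\beta} = (\widehat{C}_{YX}^{\beta} - C_{*}\widehat{C}_{X}^{\beta})\, g_{\lambda}(\widehat{C}_{X}^{\beta})$ and then insert regularizing factors involving $T_{\lambda} := C_{X} + \lambda I$ and $\widehat{T}_{\lambda} := \widehat{C}_{X}^{\beta} + \lambda I$ so that a pure "data" term (to be controlled by concentration) is separated from deterministic operator-norm factors. Concretely, I would write
\[
(\widehat{C}_{\lambda}^{\beta} - \bar{C}_{\lambda}^{\beta})\, C_{X}^{1/2}
= \bigl[(\widehat{C}_{YX}^{\beta} - C_{*}\widehat{C}_{X}^{\beta}) T_{\lambda}^{-1/2}\bigr]\cdot\bigl[T_{\lambda}^{1/2}\widehat{T}_{\lambda}^{-1/2}\bigr]\cdot\bigl[\widehat{T}_{\lambda}^{1/2}\,g_{\lambda}(\widehat{C}_{X}^{\beta})\,\widehat{T}_{\lambda}^{1/2}\bigr]\cdot\bigl[\widehat{T}_{\lambda}^{-1/2}T_{\lambda}^{1/2}\bigr]\cdot\bigl[T_{\lambda}^{-1/2}C_{X}^{1/2}\bigr],
\]
take Hilbert--Schmidt norm of the first bracket and operator norms of the remaining four, and control each in turn.

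The deterministic factors are handled by the tools already in the excerpt. The last factor satisfies $\|T_{\lambda}^{-1/2}C_{X}^{1/2}\|_{op} \le 1$ by functional calculus. The middle factor is at most $B + D$ by combining $|\sigma g_{\lambda}(\sigma)| \le D$ and $|\lambda g_{\lambda}(\sigma)| \le B$ from Definition~\ref{def:regularization}, since $\widehat{T}_{\lambda}^{1/2} g_{\lambda}(\widehat{C}_{X}^{\beta}) \widehat{T}_{\lambda}^{1/2} = (\widehat{C}_{X}^{\beta} + \lambda I) g_{\lambda}(\widehat{C}_{X}^{\beta})$. The two "bridging" factors $\|T_{\lambda}^{1/2}\widehat{T}_{\lambda}^{-1/2}\|_{op}$ and $\|\widehat{T}_{\lambda}^{-1/2} T_{\lambda}^{1/2}\|_{op}$ (which coincide by taking adjoints) are each at most $\sqrt{2}$ under condition~\eqref{eq:condition_on_lambda}, by Lemma~\ref{lemma:5} with $l = 1/2$.

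For the stochastic first factor, I would write
\[
(\widehat{C}_{YX}^{\beta} - C_{*}\widehat{C}_{X}^{\beta}) T_{\lambda}^{-1/2} = \frac{1}{n}\sum_{i=1}^{n}\zeta_{i},
\qquad
\zeta_{i} := \beta(x_{i})\bigl(y_{i} - C_{*}\phi(x_{i})\bigr) \otimes T_{\lambda}^{-1/2}\phi(x_{i}),
\]
viewed as i.i.d.\ $\mathcal{S}_{2}(\mathcal{H},Y)$-valued random elements, and apply a Bernstein-type inequality in Hilbert space. The key computations are: (i) $\mathbb{E}[\zeta_{i}] = 0$, obtained by conditioning on $x$ and using $\mathbb{E}_{p(y\mid x)}[y] = C_{*}\phi(x)$ together with the covariate-shift identity $\beta\, dp_{X} = dq_{X}$; (ii) the almost-sure bound $\|\zeta_{i}\|_{HS} \le bM\kappa/\sqrt{\lambda}$, using $\beta \le b$, the noise bound $\|y - C_{*}\phi(x)\|_{Y} \le M$ (absorbing Assumption~\ref{bound-Y-asssumption}), and $\|T_{\lambda}^{-1/2}\phi(x)\|_{\mathcal{H}} \le \kappa/\sqrt{\lambda}$; (iii) the variance bound $\mathbb{E}\|\zeta_{i}\|_{HS}^{2} \le bM^{2}\,\mathbb{E}_{q}\langle T_{\lambda}^{-1}\phi(x),\phi(x)\rangle = bM^{2}\,\mathcal{N}(\lambda)$, again exploiting the change of measure via $\beta$.

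Putting the pieces together, Bernstein's inequality yields, with probability at least $1 - \delta$,
\[
\bigl\|(\widehat{C}_{YX}^{\beta} - C_{*}\widehat{C}_{X}^{\beta}) T_{\lambda}^{-1/2}\bigr\|_{HS} \lesssim \left(\frac{bM\kappa}{n\sqrt{\lambda}} + M\sqrt{\frac{b\,\mathcal{N}(\lambda)}{n}}\right)\log\!\left(\frac{2}{\delta}\right),
\]
and multiplying by the four deterministic constants $(B+D)\cdot(\sqrt{2})^{2} \cdot 1$ yields the claim after absorbing universal numerical factors into the stated constants $10$ and $5$. The main obstacle is step~(iii) combined with the mean-zero step: one must carefully switch between $p$ and $q$ via $\beta$ and verify that the trace bound indeed yields exactly $\mathcal{N}(\lambda)$ rather than a biased effective dimension; once this is in place, the remainder is the standard Bernstein plus product-inequality machinery.
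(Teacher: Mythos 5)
Your proposal is correct and follows essentially the same route as the paper: the identical five-factor decomposition, the same i.i.d.\ random element $\beta(x)(y-C_*\phi(x))\otimes (C_X+\lambda I)^{-1/2}\phi(x)$ with the same almost-sure and variance bounds fed into a Pinelis/Bernstein inequality, and the same $B+D$ bound on $(\widehat{C}_X^\beta+\lambda I)g_\lambda(\widehat{C}_X^\beta)$. The only cosmetic difference is that you bound the two bridging factors by $\sqrt{2}$ each via Lemma~\ref{lemma:5}, whereas the paper bounds their squared product by $5/2$ via Lemma~\ref{Lemma:2} and the Cordes inequality, which is where its specific constants $10$ and $5$ come from.
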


\begin{lemma}[\cite{Pinelis1985}]\label{PINELIS:INEQUALITY} 
Let $(\Omega,\mathcal{F},\mathbb{P})$ be a probability space and let $\xi$ be a random variable on $\Omega$ taking values in a real separable Hilbert space $\mathcal{H}$. 
Assume that there exist two positive constants $L$ and $\sigma$ such that

\begin{equation}\label{moment-equation}
\mathbb{E}\!\left[\big\|\xi-\mathbb{E}[\xi]\big\|_{\mathcal{H}}^{m}\right]
\le \frac{1}{2} m!\,\sigma^{2} L^{m-2},
\qquad \forall\, m \ge 2 .
\end{equation}
Let $\xi_1,\ldots,\xi_n$ be i.i.d.\ copies of $\xi$. Then, for all $n \in \mathbb{N}$ and $0<\delta<1$, it holds that

\[
\mathbb{P}
\left(
\left\|
\frac{1}{n}\sum_{i=1}^{n} \xi_i - \mathbb{E}[\xi]
\right\|_{\mathcal{H}}
\le
2\left(\frac{L}{n} + \frac{\sigma}{\sqrt{n}}\right)
\log \left(\frac{2}{\delta}\right)
\right)
\ge 1-\delta .
\]
Moreover, the moment condition~(\ref{moment-equation}) holds provided that

\[
\|\xi(\omega)\|_{\mathcal{H}} \le \frac{L}{2} \quad {almost \,surely},
\quad {and} \quad
\mathbb{E}\!\left[\|\xi\|_{\mathcal{H}}^{2}\right] \le \sigma^{2}.
\]
\end{lemma}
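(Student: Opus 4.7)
The plan is to handle the two parts of the lemma separately: the Bernstein-type tail bound, which is the classical concentration inequality for Hilbert-space-valued sums and which we would invoke directly from \cite{Pinelis1985}, and the sufficient moment condition, which is an elementary consequence of boundedness.

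For the tail bound, set $\eta_i = \xi_i - \mathbb{E}\xi_i$ and $S_n = \sum_{i=1}^n \eta_i$. The Bernstein moment hypothesis (\ref{moment-equation}) together with the Taylor expansion of $e^{t\cdot}$ gives, for $0 < t < 1/L$, the scalar exponential-moment estimate
\[
\mathbb{E}\exp(t\|\eta_i\|_{\mathcal{H}})
\;\le\;
\exp\!\left(t\,\mathbb{E}\|\eta_i\|_{\mathcal{H}} \;+\; \frac{\sigma^2 t^2}{2(1-tL)}\right).
\]
The crucial step, due to Pinelis, is to upgrade this scalar control to an exponential bound on $\|S_n\|_{\mathcal{H}}$ itself by exploiting the 2-smoothness of the norm on a separable Hilbert space: one verifies that a suitable convex transform of $\|S_n\|_{\mathcal{H}}$ is a nonnegative submartingale whose conditional increments are controlled as in the scalar case. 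The Chernoff method then produces
\[
\mathbb{P}(\|S_n\|_{\mathcal{H}} \ge u)
\;\le\;
2\exp\!\left(-\frac{u^2}{2(n\sigma^2 + Lu)}\right),
\]
and setting the right side equal to $\delta$, using $\sqrt{a+b} \le \sqrt{a}+\sqrt{b}$, yields $u \le 2(L + \sigma\sqrt{n})\log(2/\delta)$; dividing by $n$ then gives the stated form.

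For the sufficient moment condition, let $\eta := \xi - \mathbb{E}\xi$. By the triangle inequality and Jensen,
\[
\|\eta\|_{\mathcal{H}} \;\le\; \|\xi\|_{\mathcal{H}} + \mathbb{E}\|\xi\|_{\mathcal{H}} \;\le\; L
\]
almost surely, and $\mathbb{E}\|\eta\|_{\mathcal{H}}^{2} \le \mathbb{E}\|\xi\|_{\mathcal{H}}^{2} \le \sigma^2$. Hence, for every $m \ge 2$,
\[
\mathbb{E}\|\eta\|_{\mathcal{H}}^{m}
\;\le\;
L^{m-2}\,\mathbb{E}\|\eta\|_{\mathcal{H}}^{2}
\;\le\;
\sigma^2 L^{m-2}
\;\le\;
\tfrac{1}{2}\,m!\,\sigma^2 L^{m-2},
\]
since $m!/2 \ge 1$ for $m \ge 2$, which establishes (\ref{moment-equation}). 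The main obstacle is the Hilbert-space upgrade in the first part: a coordinate-wise argument is unavailable in infinite dimensions, and the scalar exponential bound does not transfer for free. Pinelis' 2-smoothness submartingale argument is precisely what makes the vector-valued Bernstein inequality possible, and it is this non-trivial step that we cite rather than reproduce, since the moment verification above is sufficient for all downstream applications of the lemma in this paper.
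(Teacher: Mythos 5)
Your proposal is correct and takes the same route as the paper, which gives no proof of this lemma at all and simply imports it from the cited reference of Pinelis (1985): the vector-valued Bernstein tail bound is deferred to that source, and the only part amenable to a short argument is the sufficient moment condition, which you verify correctly via $\|\xi-\mathbb{E}\xi\|_{\mathcal H}\le L$ a.s., $\mathbb{E}\|\xi-\mathbb{E}\xi\|_{\mathcal H}^2\le\sigma^2$, and $m!/2\ge 1$. Your sketch of the Chernoff step is slightly loose on constants (one needs $\log(2/\delta)\ge 1/2$, which indeed holds for $\delta<1$), but nothing essential is missing.
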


\begin{lemma}[Lemma A.3, \cite{GUPTA_ACHA2025101745}]\label{Naveen-acha:lemma}
Let $A$ and $B$ be positive operators on a Hilbert space $\mathcal H$.
Then, for any integer $n \ge 1$ and any $\lambda > 0$, the following identity holds:
\begin{align*}
(B + \lambda I)^{-n} - (A + \lambda I)^{-n}
&= (B + \lambda I)^{-(n-1)}
\bigl[ (B + \lambda I)^{-1}
      - (A + \lambda I)^{-1} \bigr] \\
&\quad + \sum_{i=1}^{n-1}
(B + \lambda I)^{-i}
(A - B)
(A + \lambda I)^{-(n+1-i)} .
\end{align*}
\end{lemma}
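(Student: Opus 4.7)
The statement is a purely algebraic operator identity: since $A$ and $B$ are positive, both $X := B+\lambda I$ and $Y := A+\lambda I$ are strictly positive and hence boundedly invertible, so all expressions below are well-defined bounded operators. Note that $Y - X = A - B$. The two ingredients I would use are the \emph{first resolvent identity}
\[
X^{-1} - Y^{-1} \;=\; X^{-1}(Y-X)Y^{-1} \;=\; X^{-1}(A-B)Y^{-1},
\]
which follows from multiplying out $X^{-1}(Y - X)Y^{-1} = X^{-1} - Y^{-1}$, and a standard telescoping of the noncommutative difference of powers.

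My approach is to write the difference $X^{-n}-Y^{-n}$ as a telescoping sum, introducing intermediate operators of the form $X^{-(n-i)}Y^{-i}$. Specifically,
\[
X^{-n} - Y^{-n}
\;=\; \sum_{i=0}^{n-1}\Bigl( X^{-(n-i)}Y^{-i} - X^{-(n-i-1)}Y^{-(i+1)} \Bigr)
\;=\; \sum_{i=0}^{n-1} X^{-(n-i-1)}\bigl(X^{-1}-Y^{-1}\bigr)Y^{-i}.
\]
Applying the resolvent identity inside each summand converts $X^{-1}-Y^{-1}$ into $X^{-1}(A-B)Y^{-1}$, giving
\[
X^{-n} - Y^{-n} \;=\; \sum_{i=0}^{n-1} X^{-(n-i)}(A-B)Y^{-(i+1)}.
\]
After the reindexing $j = n-i$ (so $j$ ranges over $1,\dots,n$), this becomes $\sum_{j=1}^{n} X^{-j}(A-B)Y^{-(n+1-j)}$.

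Finally, I would split off the $j=n$ term from the last sum and recognize it, via the resolvent identity run in reverse, as $X^{-n}(A-B)Y^{-1} = X^{-(n-1)}\bigl[X^{-1}-Y^{-1}\bigr]$. This produces exactly the form stated in the lemma, with the isolated leading term $(B+\lambda I)^{-(n-1)}\bigl[(B+\lambda I)^{-1}-(A+\lambda I)^{-1}\bigr]$ and the remaining sum over $i=1,\dots,n-1$. There is no genuine analytic obstacle here; the only point that requires care is consistent bookkeeping of indices, and the induction $n \to n+1$ (using $X^{-(n+1)}-Y^{-(n+1)} = X^{-1}(X^{-n}-Y^{-n}) + (X^{-1}-Y^{-1})Y^{-n}$) is available as a backup derivation if one prefers to avoid the telescoping manipulation.
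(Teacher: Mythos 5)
Your argument is correct. The paper itself does not prove this lemma; it is imported verbatim from the cited reference \cite{GUPTA_ACHA2025101745}, so there is no in-paper proof to compare against. Your derivation stands on its own: with $X=B+\lambda I$, $Y=A+\lambda I$, the telescoping decomposition $X^{-n}-Y^{-n}=\sum_{i=0}^{n-1}X^{-(n-i-1)}(X^{-1}-Y^{-1})Y^{-i}$, the resolvent identity $X^{-1}-Y^{-1}=X^{-1}(A-B)Y^{-1}$, the reindexing $j=n-i$, and the final step of converting the $j=n$ term back into $X^{-(n-1)}\bigl[X^{-1}-Y^{-1}\bigr]$ all check out, and the $n=1$ case (empty sum) is consistent. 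This is the standard proof of such resolvent-power identities, and no gap remains.
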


\noindent
The following two theorems present the main error bounds of this section, providing estimates in the  \(L_{2}(q_X,{Y})\) norm (Theorem~\ref{Main-Theorem L2}) and in the vRKHS norm (Theorem~\ref{vrkhs:theorem:proof}), respectively.

\begin{theorem}\label{Main-Theorem L2}
 Suppose that {assumptions~\ref{kernel-bound}--\ref{assump:source}} are satisfied.
If \eqref{eq:condition_on_lambda} holds and the qualification $\nu$ of the regularization family covers the index function \(\varphi(t)\sqrt{t}\),
then with probability at least \(1-\delta\), we have
\[
\big\|f^*-\widehat{f}_{\textbf{z},\lambda} \big\|_{{L}_2(q_X,Y)}\leq \left(
\frac{A_1}{n}\sqrt{\frac{1}{\lambda}}
+ A_2\sqrt{\frac{\mathcal{N}(\lambda)}{n}}
\right) \log\left(\frac{2}{\delta}\right)
+ A_3\,\sqrt{\lambda}\,\varphi(\lambda).
\]

Here the constants $A_1,A_2,A_3$ are given by 
\begin{align*}
A_1
&= 2^{2\nu-\bar \nu+2}(\gamma_\nu+\gamma)\max\!\left(1,\frac{1}{c}\right)
\,\bar \nu\, b\,\kappa^{2}\,\|C_{*}\|_{\mathcal S_2(\mathcal H,Y)}
\;+\; 10\,b\,M\,\kappa\,(B+D), \\[6pt]
A_2
&= 2^{2\nu-\bar \nu+1}(\gamma_\nu+\gamma)\max\!\left(1,\frac{1}{c}\right)
\,\bar \nu\, \sqrt{b}\kappa\,\|C_{*}\|_{\mathcal S_2(\mathcal H,Y)}
\;+\; 5\,M\,\sqrt{b}\,(B+D), \\[6pt]
A_3
&= 2^{2\nu-\bar \nu}(\gamma_\nu+\gamma)\max\!\left(1,\frac{1}{c}\right)
\,\|C_{o}\|_{\mathcal S_2(\mathcal H,Y)},
\end{align*}
where $\bar{\nu} := \lfloor \nu -1/2 \rfloor$ denotes the greatest integer
less than or equal to~$\nu -1/2$.
\end{theorem}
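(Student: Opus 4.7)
The plan is to convert the $L_2(q_X,Y)$ error into an operator Hilbert--Schmidt norm, split it into bias and variance pieces through an intermediate operator, and then combine the two preceding lemmas via a union bound.

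First I would translate the function-space error to an operator-space error. Since $\widehat{f}_{\mathbf{z},\lambda}(x) = \widehat{C}_\lambda^\beta \phi(x)$ and $f^*(x) = C_* \phi(x)$, the identity $\mathbb{E}_{x \sim q_X}[\langle \phi(x), A\phi(x)\rangle_{\mathcal H}] = \operatorname{Tr}(A\,C_X)$, applied with $A = (C_* - \widehat{C}_\lambda^\beta)^*(C_* - \widehat{C}_\lambda^\beta)$, gives
\[
\|f^* - \widehat{f}_{\mathbf{z},\lambda}\|_{L_2(q_X,Y)}^2
= \int_X \|(C_* - \widehat{C}_\lambda^\beta)\phi(x)\|_Y^2 \, dq_X(x)
= \big\|(C_* - \widehat{C}_\lambda^\beta)\,C_X^{1/2}\big\|_{\mathcal S_2(\mathcal H,Y)}^2.
\]
This step leans on the definition of the covariance operator under the \emph{target} marginal, $C_X = \mathbb{E}_q[\phi(x)\otimes\phi(x)]$, introduced at the start of Section~\ref{sec:proof}.

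Second, I would insert the noise-free proxy $\bar{C}_\lambda^\beta := C_* \widehat{C}_X^\beta g_\lambda(\widehat{C}_X^\beta)$ used in Lemma~\ref{L_2(lemma-A)} and apply the triangle inequality:
\[
\big\|(C_* - \widehat{C}_\lambda^\beta)C_X^{1/2}\big\|_{\mathcal S_2}
\leq
\big\|(C_* - \bar{C}_\lambda^\beta)C_X^{1/2}\big\|_{\mathcal S_2}
+
\big\|(\widehat{C}_\lambda^\beta - \bar{C}_\lambda^\beta)C_X^{1/2}\big\|_{\mathcal S_2}.
\]
The first summand is the approximation (bias) term, governed by how well $g_\lambda$ inverts $\widehat{C}_X^\beta$ together with the source condition $C_* = C_o \varphi(C_X)$; the second is the estimation (variance) term driven by label noise and finite sampling. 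These are exactly the two quantities bounded by Lemma~\ref{L_2(lemma-A)} and Lemma~\ref{L2:bound-B}.

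Third, I would invoke those two lemmas, each valid with probability $\geq 1-\delta$ under assumptions~\ref{kernel-bound}--\ref{assump:source} and the parameter condition~\eqref{eq:condition_on_lambda}, and apply a union bound (rescaling $\delta \mapsto \delta/2$ is absorbed into the constants and the log factor) so that both estimates hold simultaneously with probability $\geq 1-\delta$. Summing them, the $\sqrt{\lambda}\varphi(\lambda)$ contribution comes solely from Lemma~\ref{L_2(lemma-A)} and carries the constant $A_3$; the $\tfrac{1}{n\sqrt{\lambda}}\log(2/\delta)$ coefficients add to give the stated $A_1$, namely $2^{2\nu-\bar\nu+2}(\gamma_\nu+\gamma)\max(1,1/c)\,\bar\nu\, b \kappa^2 \|C_*\|_{\mathcal S_2} + 10\,bM\kappa(B+D)$, and analogously the $\sqrt{\mathcal N(\lambda)/n}\log(2/\delta)$ coefficients combine to the announced $A_2$.

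The actual obstacle for this theorem is modest: it is essentially a decomposition followed by constant chasing and a union bound. The genuine technical content is hidden inside Lemmas~\ref{L_2(lemma-A)} and~\ref{L2:bound-B}, which in turn require the operator-ratio estimates of Lemmas~\ref{Lemma:1},~\ref{Lemma:2}, and~\ref{lemma:5} to transfer from $C_X$ to $\widehat{C}_X^\beta$, plus the qualification hypothesis covering $\varphi(t)\sqrt{t}$ to extract the rate $\sqrt\lambda\,\varphi(\lambda)$ from the general source condition. Granted those lemmas, the present proof is short.
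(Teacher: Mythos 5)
Your proposal matches the paper's proof: the same reduction of the $L_2(q_X,Y)$ error to $\bigl\|(C_*-\widehat{C}_\lambda^\beta)\,C_X^{1/2}\bigr\|_{\mathcal S_2(\mathcal H,Y)}$, the same triangle-inequality split through $\bar{C}_\lambda^\beta$, and the same invocation of Lemmas~\ref{L_2(lemma-A)} and~\ref{L2:bound-B} to assemble the constants. Your explicit remark about the union bound is a fair point of added care, but the argument is otherwise identical to the paper's.
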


\begin{proof}
    \noindent
We start by observing that
\begin{align*}
\big\|f^*-\widehat{f}_{\textbf{z},\lambda}\big\|_{{L}_2(q_X,Y)}
&=
\big\|C_*-\widehat{C}_{\lambda}^\beta \big\|_{\mathcal S_2( L_2(q_X),Y)} \\
&=
\big\|(C_*-\widehat{C}_{\lambda}^\beta)\, C_X^{1/2} \big\|_{\mathcal S_2(\mathcal H,Y)} .
\end{align*}

Applying the triangle inequality yields
\begin{align*}
\big\|(C_*-\widehat{C}_{\lambda}^\beta)\, C_X^{1/2} \big\|_{\mathcal S_2(\mathcal H,Y)}
&\leq
\underbrace{\big\|(C_*-\bar{C}_{\lambda}^\beta)\, C_X^{1/2} \big\|_{\mathcal S_2(\mathcal H,Y)}}_{\textbf{A}}  +
\underbrace{\big\|(\bar{C}_{\lambda}^\beta -\widehat{C}_\lambda^\beta)\, C_X^{1/2} \big\|_{\mathcal S_2(\mathcal H,Y)}}_{\textbf{B}} .
\end{align*}

\noindent
The bound for $Term ~ \textbf{A}$ can be obtained using Lemma \ref{L_2(lemma-A)} and bound for $Term~\textbf{B}$ can be obtained using Lemma \ref{L2:bound-B}, hence with probability at least $1-\delta$, we have

\[
 \big\|f^*-\widehat{f}_{\textbf{z},\lambda}\big\|_{{L}_2(q_X,Y)}\leq \left(
\frac{A_1}{n}\sqrt{\frac{1}{\lambda}}
+ A_2\sqrt{\frac{\mathcal{N}(\lambda)}{n}}
\right) \log\left(\frac{2}{\delta}\right)
+ A_3\,\sqrt{\lambda}\,\varphi(\lambda).
\]

\end{proof}

\begin{theorem}\label{vrkhs:theorem:proof}
Suppose that {assumptions~\ref{kernel-bound}--\ref{assump:source}} are satisfied.
If \eqref{eq:condition_on_lambda} holds and the qualification $\nu$ of the regularization family covers the index function \(\varphi(t)\),
then with probability at least \(1-\delta\), we have

\[
\big\|f^*-\widehat{f}_{\textbf{z},\lambda}\big\|_{\mathcal G}\leq \left(
\frac{A_1}{n\lambda}
+ A_2\sqrt{\frac{\mathcal{N}(\lambda)}{n\lambda}}
\right) \log\left(\frac{2}{\delta}\right)
+ A_3\,\varphi(\lambda).
\]

\noindent
Here the constants $A_1,A_2,A_3$ are given by 
\begin{align*}
A_1
&= 2^{2\nu- \bar \nu+2}(\gamma_\nu+\gamma)\max\!\left(1,\frac{1}{c}\right)
\,\bar \nu\, b\,\kappa^{2}\,\|C_{*}\|_{\mathcal S_2(\mathcal H,Y)}
\;+\; 2\sqrt{10}\,b\,M\,\kappa\,(B+D), \\[6pt]
A_2
&= 2^{2\nu- \bar \nu+1}(\gamma_\nu+\gamma)\max\!\left(1,\frac{1}{c}\right)
\, \bar \nu\, \sqrt{b}\kappa\,\|C_{*}\|_{\mathcal S_2(\mathcal H,Y)}
\;+\; \sqrt{10}\,\sqrt{b}\,M(B+D), \\[6pt]
A_3
&= 2^{2\nu- \bar \nu}(\gamma_\nu+\gamma)\max\!\left(1,\frac{1}{c}\right)
\,\|C_{o}\|_{\mathcal S_2(\mathcal H,Y)}.
\end{align*}
where $\bar{\nu} := \lfloor \nu  \rfloor$ denotes the greatest integer
less than or equal to~$\nu $.
\end{theorem}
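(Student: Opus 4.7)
The plan is to replay the bias--variance split used for Theorem~\ref{Main-Theorem L2}, but to drop the trailing $C_X^{1/2}$ factor that arose there from the identification of the $L_2(q_X,Y)$-norm with the Hilbert--Schmidt norm. By the isometric isomorphism $\mathcal G\simeq\mathcal S_2(\mathcal H,Y)$ one has
\[
\bigl\|f^*-\widehat f_{\mathbf z,\lambda}\bigr\|_{\mathcal G}
=\bigl\|C_*-\widehat C_\lambda^\beta\bigr\|_{\mathcal S_2(\mathcal H,Y)},
\]
and inserting the auxiliary operator $\bar C_\lambda^\beta=C_*\widehat C_X^\beta g_\lambda(\widehat C_X^\beta)$ splits the right-hand side into an approximation part $\mathbf{A}'=\|C_*-\bar C_\lambda^\beta\|_{\mathcal S_2}$ and a sample part $\mathbf{B}'=\|\bar C_\lambda^\beta-\widehat C_\lambda^\beta\|_{\mathcal S_2}$.

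For $\mathbf{A}'$ I would mirror the proof of Lemma~\ref{L_2(lemma-A)}. Writing $C_*-\bar C_\lambda^\beta=C_o\,\varphi(C_X)\,r_\lambda(\widehat C_X^\beta)$ with $r_\lambda(\sigma)=1-\sigma g_\lambda(\sigma)$, I would sandwich the products $(\widehat C_X^\beta+\lambda I)^{\pm\bar\nu}$ and $(C_X+\lambda I)^{\pm\bar\nu}$ between the factors, where now $\bar\nu=\lfloor\nu\rfloor$ rather than $\lfloor\nu-1/2\rfloor$. The residual factor $\|r_\lambda(\widehat C_X^\beta)(\widehat C_X^\beta+\lambda I)^{\bar\nu}\|_{op}\lesssim\lambda^\nu$ comes from the qualification in Definition~\ref{def:regularization}; the swap factor $\|(\widehat C_X^\beta+\lambda I)^{\bar\nu}(C_X+\lambda I)^{-\bar\nu}\|_{op}\lesssim 1$ is obtained by iterating Lemma~\ref{lemma:5}; and the smoothness factor $\|\varphi(C_X)(C_X+\lambda I)^{-\bar\nu}\|_{op}\lesssim\varphi(\lambda)/\lambda^{\bar\nu}$ follows from the covering condition of Definition~\ref{qualfication_covering}, now invoked for $\varphi$ alone instead of for $\varphi(t)\sqrt t$. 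Multiplying these three scalars yields the clean approximation contribution $A_3\,\varphi(\lambda)$, without the $\sqrt\lambda$ that appeared in the $L_2$ version.

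For $\mathbf{B}'$ I would start from the identity
\[
\bar C_\lambda^\beta-\widehat C_\lambda^\beta
=\bigl(C_*\widehat C_X^\beta-\widehat C_{YX}^\beta\bigr)\,g_\lambda(\widehat C_X^\beta)
\]
and insert $(C_X+\lambda I)^{1/2}(C_X+\lambda I)^{-1/2}$ between the two operators. The noise factor $\bigl\|(C_*\widehat C_X^\beta-\widehat C_{YX}^\beta)(C_X+\lambda I)^{-1/2}\bigr\|_{\mathcal S_2}$ is precisely the quantity already bounded inside the proof of Lemma~\ref{L2:bound-B} via Pinelis' inequality (Lemma~\ref{PINELIS:INEQUALITY}) applied to the centred i.i.d.\ random operators $\beta(x_i)(y_i-C_*\phi(x_i))\otimes(C_X+\lambda I)^{-1/2}\phi(x_i)$, and it delivers the expected $bM\kappa/(n\sqrt\lambda)+M\sqrt{b\,\mathcal N(\lambda)/n}$ scaling. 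The filter factor $\bigl\|(C_X+\lambda I)^{1/2}g_\lambda(\widehat C_X^\beta)\bigr\|_{op}$ is handled by first swapping $C_X$ for $\widehat C_X^\beta$ via Lemma~\ref{lemma:5} with $l=1/2$ under condition \eqref{eq:condition_on_lambda}, and then bounding the scalar supremum
\[
\sup_{\sigma\in(0,s]}\bigl|(\sigma+\lambda)^{1/2}g_\lambda(\sigma)\bigr|
\le \sqrt{DB/\lambda}+\sqrt{B/\lambda},
\]
which follows directly from the two parts of Definition~\ref{def:regularization}. The resulting $1/\sqrt\lambda$ is exactly what turns the $L_2$ rate of Theorem~\ref{Main-Theorem L2} into the vRKHS rate $A_1/(n\lambda)+A_2\sqrt{\mathcal N(\lambda)/(n\lambda)}$ claimed here.

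The main difficulty I anticipate lies in $\mathbf{B}'$: one must make sure the two spectral calculi for $C_X$ and $\widehat C_X^\beta$ interact cleanly, invoke Lemma~\ref{lemma:5} at the correct exponent, keep the $\sqrt\lambda$ accounting symmetric between the noise factor and the filter factor, and track constants so that they collapse into the stated $A_1,A_2,A_3$ with the new $\bar\nu=\lfloor\nu\rfloor$. Once this bookkeeping is done, a union bound over the events on which Lemmas~\ref{lemma:5} and~\ref{PINELIS:INEQUALITY} apply gives the advertised $1-\delta$ probability.
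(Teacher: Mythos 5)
Your proposal is correct and follows essentially the same route as the paper: the same isometry and triangle-inequality split through $\bar C_\lambda^\beta$, the same adaptation of Lemma~\ref{L_2(lemma-A)} with $\bar\nu=\lfloor\nu\rfloor$ and the covering condition applied to $\varphi$ alone for the approximation term, and the same factorization of the sample term into the Pinelis-controlled noise factor, the Lemma~\ref{lemma:5} swap, and the scalar supremum $\sup_\sigma|(\sigma+\lambda)^{1/2}g_\lambda(\sigma)|\lesssim\lambda^{-1/2}$. The only cosmetic difference is that you bound that supremum by splitting $(\sigma+\lambda)^{1/2}\le\sigma^{1/2}+\lambda^{1/2}$ (giving $\sqrt{DB}+B$ in place of the paper's $B+D$), which does not affect the rate.
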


\begin{proof}
    \noindent
We have
\begin{align*}
\big\|f^*-\widehat{f}_{\textbf{z},\lambda}\big\|_{\mathcal G}
&=
\big\|C_*-\widehat{C}_{\lambda}^\beta\, \big\|_{\mathcal S_2(\mathcal H,Y)} \leq
\underbrace{\big\|C_*-\bar{C}_{\lambda}^\beta\, \big\|_{\mathcal S_2(\mathcal H,Y)}}_{\textbf{A}}  +
\underbrace{\big\|\bar{C}_{\lambda}^\beta -\widehat{C}_\lambda^\beta\, \big\|_{\mathcal S_2(\mathcal H,Y)}}_{\textbf{B}}. 
\end{align*}

\noindent
The bound for Term~\textbf{A} can be obtained by following the steps of
Lemma~\ref{L_2(lemma-A)}. Let $\bar \nu$ denote the greatest integer less than $\nu$.
Then, with probability at least $1-\delta$, we obtain

\[
\begin{aligned}
\big\|C_*-\bar{C}_{\lambda}^\beta\big\|_{\mathcal  S_2(\mathcal H,Y)}
\leq\;&\max\!\left(1,\frac{1}{c}\right)
\Bigg(
\frac{2^{\,2\nu-\bar \nu}(\gamma_\nu+\gamma)\,
      4\,\bar\nu\,b\,\kappa^2\,
      \|C_{*}\|_{\mathcal S_2(\mathcal H,Y)}}{n\lambda}
\\
&\quad
+ 2^{\,2\nu-\bar\nu}(\gamma_\nu+\gamma)\,
      2\,\bar\nu\,\sqrt{b}\kappa\,
      \|C_{*}\|_{\mathcal S_2(\mathcal H,Y)}
      \sqrt{\frac{\mathcal{N}(\lambda)}{n\lambda}}
\Bigg)
\log\!\left(\frac{2}{\delta}\right)
\\
&\quad
+ 2^{\,2\nu-\bar\nu}(\gamma_\nu+\gamma)\,
      \max\!\left(1,\frac{1}{c}\right)\,
      \|C_{o}\|_{\mathcal S_2(\mathcal H,Y)}\,
      \varphi(\lambda).
\end{aligned}
\]

To obtain bound for $Term~$\textbf{B}, we observe
\begin{align*}
\bigl\|  \widehat{C}_\lambda^\beta-\bar{C}_{\lambda}^\beta\,  \bigr\|_{\mathcal  S_2(\mathcal H,Y)}
&=
\bigl\|
( \widehat{C}_{Y,X}^\beta-{C_*}\widehat C_X^\beta)\,
g_\lambda(\widehat{C}_X^\beta)\,
\bigr\|_{\mathcal  S_2(\mathcal H,Y)} .
\end{align*}

\noindent
Next, we split the operator as
\begin{align*}
&\bigl\|
( \widehat{C}_{Y,X}^\beta-{C_*}\widehat C_X^\beta)\,
g_\lambda(\widehat{C}_X^\beta)\,
\bigr\|_{\mathcal S_2(\mathcal H,Y)} \\
&\leq
\underbrace{\bigl\|
( \widehat{C}_{Y,X}^\beta-{C_*}\widehat C_X^\beta)\,
(C_{X}+\lambda I)^{-1/2}
\bigr\|_{\mathcal S_2(\mathcal H,Y)}}_{A}
\\
&\hspace{9.5em}\times
\underbrace{\bigl\|
(C_{X}+\lambda I)^{1/2}
(\widehat{C}^{\beta}_{X}+\lambda I)^{-1/2}
\bigr\|_{op}}_{B}
\;
\underbrace{\bigl\|
(\widehat{C}^{\beta}_{X}+\lambda I)^{1/2}
g_\lambda(\widehat{C}_X^\beta)
\bigr\|_{op}}_{C}.
\end{align*}

\noindent
The bound for $Term~A,B$ are already given in proof of Lemma \ref{L2:bound-B}. For $Term~C$, we can write

\[
C = {sup}_\sigma \; | (\sigma +\lambda)^{1/2} g_\lambda(\sigma)| \leq {sup}_\sigma \; \left| \frac{ (\sigma +\lambda) g_\lambda(\sigma)}{(\sigma +\lambda)^{1/2}}\right| \leq \frac{B+D}{\sqrt{\lambda}}.   \]

\noindent
Hence, with probability at least $1-\delta$, we obtain

\begin{align*}
\bigl\|  \widehat{C}_\lambda^\beta-\bar{C}_{\lambda}^\beta\,  \bigr\|_{\mathcal  S_2(\mathcal H,Y)}
\;&\leq\;
\sqrt{\frac{5}{2}}(B+D)\left(
\frac{4bM\kappa}{n{\lambda}}
+ 2
M\sqrt{
\frac{b \; \mathcal{N}(\lambda)}{n\lambda}
}
\right)\log\!\left( \frac{2}{\delta} \right)
.
\end{align*}

\noindent
Finally, combining the bounds for Term~\textbf{A} and Term~\textbf{B} yields the desired result.

\end{proof}

The following two results establish the convergence rates in both the \(L_{2}(q_X,{Y})\) norm and the vRKHS norm, covering the general source condition (Theorem~\ref{Rates-theorem-general}) as well as Hölder's source condition (Corollary~\ref{Holder-corollary}).

\begin{theorem}\label{Rates-theorem-general}
Suppose that {assumptions~\ref{kernel-bound}--\ref{eigendecay}} hold and  $ \delta \in (0,  1)$. 
Assume $\lambda \in (0,1]$ satisfies $\lambda = \Psi^{-1}(n^{-\frac{1}{2}})$, 
where $\Psi(t) = t^{\frac{1}{2} + \frac{1}{2\mathbf{b}}} \varphi(t)$.

\begin{enumerate}
\renewcommand{\labelenumi}{(\roman{enumi})}

\item
If the index function $\varphi$ satisfies the qualification of the regularization family, then with probability at least $1-\delta$,
\[
\left\|f^*-\widehat{f}_{\mathbf z,\lambda}\right\|_{\mathcal G}
\leq
c_1\,
\varphi\!\left(\Psi^{-1}\!\left(n^{-\frac{1}{2}}\right)\right)
\log\!\left(\frac{2}{\delta}\right),
\]
for some positive constant $c_1$.

\item
If the index function $\varphi(t)\sqrt{t}$ satisfies the qualification of the regularization family, then with probability at least $1-\delta$,
\[
\left\|f^*-\widehat{f}_{\mathbf z,\lambda}\right\|_{L_2(q_X,Y)}
\leq
c_2\,
\left(\Psi^{-1}\!\left(n^{-\frac{1}{2}}\right)\right)^{\frac{1}{2}}
\varphi\!\left(\Psi^{-1}\!\left(n^{-\frac{1}{2}}\right)\right)
\log\!\left(\frac{2}{\delta}\right),
\]
for some positive constant $c_2$.

\end{enumerate}

\end{theorem}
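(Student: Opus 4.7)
The plan is to derive both rates as corollaries of the high-probability error bounds in Theorem \ref{Main-Theorem L2} and Theorem \ref{vrkhs:theorem:proof} by plugging in the spectral decay estimate and the proposed choice of $\lambda$, then verifying that the variance and bias terms balance.

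First, I would invoke Assumption \ref{eigendecay} to control the effective dimension, namely $\mathcal{N}(\lambda) \lesssim \lambda^{-1/\mathbf{b}}$. Substituting this into the L$_2(q_X,Y)$ bound of Theorem \ref{Main-Theorem L2} gives an upper bound of the form
\[
\frac{A_1}{n\sqrt{\lambda}} \;+\; A_2\,\frac{\lambda^{-1/(2\mathbf{b})}}{\sqrt{n}} \;+\; A_3\,\sqrt{\lambda}\,\varphi(\lambda),
\]
(times the log factor), and similarly for the $\mathcal G$-norm bound of Theorem \ref{vrkhs:theorem:proof} the factors become $1/(n\lambda)$, $\lambda^{-1/(2\mathbf{b})}/\sqrt{n\lambda}$, and $\varphi(\lambda)$.

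Second, I would use the defining property of $\lambda = \Psi^{-1}(n^{-1/2})$, which reads $\lambda^{\,1/2+1/(2\mathbf{b})}\varphi(\lambda) = n^{-1/2}$. Rearranging gives $\lambda^{-1/(2\mathbf{b})}/\sqrt{n} = \sqrt{\lambda}\,\varphi(\lambda)$, so the variance and bias terms for the L$_2$ bound match, each of order $\sqrt{\lambda}\,\varphi(\lambda) = (\Psi^{-1}(n^{-1/2}))^{1/2}\varphi(\Psi^{-1}(n^{-1/2}))$. For the $\mathcal G$-norm the analogous identity $\lambda^{-1/(2\mathbf{b})}/\sqrt{n\lambda} = \varphi(\lambda)$ matches the variance and bias at rate $\varphi(\lambda)$. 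It then remains to check that the leading $1/n$-type term is dominated: using $1/n = \lambda^{1+1/\mathbf{b}}\varphi(\lambda)^{2}$, one gets $1/(n\sqrt{\lambda}) = \lambda^{1/2+1/\mathbf{b}}\varphi(\lambda)^{2}$, which is $\lambda^{1/\mathbf{b}}\varphi(\lambda)$ times the target rate $\sqrt{\lambda}\varphi(\lambda)$, and this factor tends to $0$ as $\lambda\to 0$ because $\mathbf{b}>1$ and $\varphi$ is an index function with $\varphi(0)=0$. The same argument handles $1/(n\lambda)$ in the vRKHS case.

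Finally, I would verify that the admissibility condition \eqref{eq:condition_on_lambda}, namely $\mathcal B_{n,\lambda}\log(2/\delta)\le \sqrt{\lambda}/2$, holds for $n$ sufficiently large with this choice. Using $\mathcal{N}(\lambda)\lesssim \lambda^{-1/\mathbf{b}}$, the condition reduces to $n\lambda \gtrsim 1$ and $n\lambda^{1+1/\mathbf{b}}\gtrsim \log^2(2/\delta)$; both follow from the identity $n = \lambda^{-1-1/\mathbf{b}}\varphi(\lambda)^{-2}$ once $\varphi(\lambda)\le 1$, which is automatic for small $\lambda$. Combining the three dominated terms then yields the stated rates, with constants $c_1,c_2$ absorbing $A_1,A_2,A_3$ and the implicit constants from $\mathcal{N}(\lambda)\lesssim\lambda^{-1/\mathbf{b}}$.

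The essentially mechanical step here is the balancing, so there is no deep obstacle; the mildly delicate point is the dominance of the $1/n$-type residual, which relies on the combination $\mathbf{b}>1$ and $\varphi(\lambda)\to 0$, and on \eqref{eq:condition_on_lambda} being verifiable from the explicit form of $n$ in terms of $\lambda$ rather than needing an independent smallness hypothesis.
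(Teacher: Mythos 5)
Your proposal is correct and follows exactly the route the paper intends (the paper states Theorem~\ref{Rates-theorem-general} without writing out the derivation): substitute $\mathcal{N}(\lambda)\lesssim\lambda^{-1/\mathbf{b}}$ from Assumption~\ref{eigendecay} into the bounds of Theorems~\ref{Main-Theorem L2} and~\ref{vrkhs:theorem:proof}, use $\lambda^{1/2+1/(2\mathbf{b})}\varphi(\lambda)=n^{-1/2}$ to balance the sample-variance and bias terms, and check that the $1/n$-type residual and condition~\eqref{eq:condition_on_lambda} are handled by the same identity. Your explicit verification of the dominance of the $1/(n\sqrt{\lambda})$ and $1/(n\lambda)$ terms and of the admissibility condition is the only nontrivial bookkeeping, and it is done correctly.
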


\begin{corollary}\label{Holder-corollary}
Suppose that Assumptions~\ref{kernel-bound}--\ref{eigendecay} hold and let
$\delta \in (0,1)$. 
Assume that the Hölder's source condition holds, i.e.,
$\varphi(t)=t^r$ for some $r>0$,
and choose the regularization parameter
$\lambda = n^{-\frac{\mathbf b}{2\mathbf b r+\mathbf b+1}}$.
Then, with probability at least $1-\delta$, the following statements hold.
\begin{enumerate}
\renewcommand{\labelenumi}{(\roman{enumi})}

\item
If $\varphi$ is covered by the qualification of the regularization family, then for $0<r<\nu$,
\[
\left\|f^*-\widehat{f}_{\mathbf z,\lambda}\right\|_{\mathcal G}
\le
c_1\,
n^{-\frac{b r}{2 b r+b+1}}
\log\!\left(\frac{2}{\delta}\right),
\]
for some positive constant $c_1$.

\item
If $\varphi(t)\sqrt{t}$ is covered by the qualification of the regularization family, then for $0<r<\nu-\frac12$,
\[
\left\|f^*-\widehat{f}_{\mathbf z,\lambda}\right\|_{L_2(q_X,Y)}
\le
c_2\,
n^{-\frac{2 b r+b}{4 b r+2 b+2}}
\log\!\left(\frac{2}{\delta}\right),
\]
for some positive constant $c_2$.

\end{enumerate}

\end{corollary}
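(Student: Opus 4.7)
The plan is to obtain Corollary \ref{Holder-corollary} as a direct specialization of Theorem \ref{Rates-theorem-general} to the Hölder index function $\varphi(t)=t^r$, so the work reduces to (a) inverting $\Psi$ explicitly, (b) checking that the covering hypothesis on the qualification is met in the stated ranges of $r$, and (c) confirming that the prescribed $\lambda$ satisfies the admissibility condition \eqref{eq:condition_on_lambda}. With these in hand, substituting $\varphi(t)=t^r$ into the two bounds of Theorem \ref{Rates-theorem-general} yields the rates.

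First I would compute $\Psi$. With $\varphi(t)=t^r$, Definition of $\Psi$ gives $\Psi(t)=t^{r+\frac12+\frac{1}{2\mathbf b}}$, hence
\[
\Psi^{-1}(u)=u^{\frac{2\mathbf b}{2\mathbf b r+\mathbf b+1}},
\qquad
\Psi^{-1}\!\bigl(n^{-1/2}\bigr)=n^{-\frac{\mathbf b}{2\mathbf b r+\mathbf b+1}},
\]
which is exactly the prescribed $\lambda$. Next, the qualification covering of Definition \ref{qualfication_covering} must be verified. For $\varphi(\sigma)=\sigma^r$, the ratio $\sigma^\nu/\varphi(\sigma)=\sigma^{\nu-r}$ is non-decreasing on $(0,s]$ whenever $r<\nu$, so the infimum over $\sigma\in[t,s]$ is $t^{\nu-r}=t^{\nu}/\varphi(t)$ and one may take $c=1$. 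Exactly the same computation with $\varphi(t)\sqrt t=t^{r+1/2}$ gives the covering under $r<\nu-\tfrac12$. These are precisely the ranges stated in (i) and (ii).

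Then I would check \eqref{eq:condition_on_lambda}. Using Assumption \ref{eigendecay}, the effective dimension satisfies $\mathcal N(\lambda)\lesssim\lambda^{-1/\mathbf b}$, so
\[
\mathcal B_{n,\lambda}
\;\lesssim\;
\frac{1}{n\sqrt{\lambda}}\;+\;\sqrt{\frac{\lambda^{-1/\mathbf b}}{n}}.
\]
Multiplying by $\log(2/\delta)$ and dividing by $\sqrt\lambda$, the dominant term becomes $n^{-1/2}\lambda^{-\frac12-\frac{1}{2\mathbf b}}$. Plugging in $\lambda=n^{-\mathbf b/(2\mathbf b r+\mathbf b+1)}$, this equals $n^{-\frac{\mathbf b r}{2\mathbf b r+\mathbf b+1}}$, which tends to $0$; hence \eqref{eq:condition_on_lambda} is satisfied for every sufficiently large $n$ (absorbed into the constants $c_1,c_2$). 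This is the only genuinely computational step and is the main place where one must be careful with exponents.

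Finally, I would substitute into Theorem \ref{Rates-theorem-general}. For (i),
\[
\varphi\!\bigl(\Psi^{-1}(n^{-1/2})\bigr)
=\lambda^{r}
=n^{-\frac{\mathbf b r}{2\mathbf b r+\mathbf b+1}},
\]
giving the vRKHS rate. For (ii),
\[
\sqrt{\Psi^{-1}(n^{-1/2})}\,\varphi\!\bigl(\Psi^{-1}(n^{-1/2})\bigr)
=\lambda^{r+1/2}
=n^{-\frac{2\mathbf b r+\mathbf b}{4\mathbf b r+2\mathbf b+2}},
\]
which is the $L_2(q_X,Y)$ rate. Nothing beyond the exponent arithmetic and the two covering checks is needed; the $\log(2/\delta)$ factor is inherited directly from Theorem \ref{Rates-theorem-general}, and the $r$-restrictions $0<r<\nu$ and $0<r<\nu-\tfrac12$ come solely from the covering requirement of the respective index functions.
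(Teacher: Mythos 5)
Your proposal is correct and follows exactly the route the paper intends: Corollary~\ref{Holder-corollary} is a direct specialization of Theorem~\ref{Rates-theorem-general} to $\varphi(t)=t^{r}$, and your computations of $\Psi^{-1}$, the covering ranges $r<\nu$ and $r<\nu-\tfrac12$ (with $c=1$ since $\sigma^{\nu-r}$ is monotone), and the verification of \eqref{eq:condition_on_lambda} via $\mathcal N(\lambda)\lesssim\lambda^{-1/\mathbf b}$ all check out. The exponent arithmetic yielding $n^{-\frac{\mathbf b r}{2\mathbf b r+\mathbf b+1}}$ and $n^{-\frac{2\mathbf b r+\mathbf b}{4\mathbf b r+2\mathbf b+2}}$ matches the stated rates.
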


\begin{remark}
Under the polynomial eigenvalue decay assumption, the convergence rates obtained in
Theorem~\ref{Rates-theorem-general} and Corollary~\ref{Holder-corollary} coincide with the
minimax optimal rates reported in \cite{rastogi2017optimal} and
\cite{blanchard2016optimalratesHolders} for the general and Hölder's source condition
respectively. Moreover, without Assumption~\ref{eigendecay}, i.e., in the absence of a
polynomial eigenvalue decay condition, similar estimates as in Theorem~10 of \cite{regularization} can still be derived for
general regularization schemes. 
This confirms that our estimator attains the optimal rate of convergence under the stated assumptions.
\end{remark}

\section{Aggregation in Regularized Vector-Valued Learning  with covariate shift
}\label{Aggregation}

Careful tuning of scalar multiplicative kernels $k(x,t)$ and  regularization parameters $\lambda$ in (\ref{predictor}) plays a crucial role, but may be computationally expensive due to the lack of prior knowledge. In the scalar setting, an aggregation approach has been developed to handle parameter selection under covariate shift \cite{DBLP:conf/iclr/DinuHBNHEMPHZ23}. Motivated by that idea, we extend the approach \cite{DBLP:conf/iclr/DinuHBNHEMPHZ23} to the infinite-dimensional setting.

Let $\{f_{\mathbf{z},j}\}_{j=1}^{l}$ be the models constructed by some fairly generic procedures from the source data sample 
$\mathbf{z} =$ $\{(x_i, y_i)\}_{i=1}^n \in X \times Y$ and (possibly approximate) values $\{\beta(x_i)\}_{i=1}^n $ of the reweighting function  $\beta(x) = \frac{dq_X(x)}{dp_X(x)}$. Considering their linear combinations

\begin{center}
$    
 f(\mathbf{c};x) = \sum_{j=1}^l c_j f_{\mathbf{z},j}(x), \quad  c_j \in \mathbb{R}, \quad \mathbf{c} = (c_1,...,c_l),
$
\end{center}

we are interested in approximating the minimizer $\mathbf{c} =\mathbf{c}^*$
\begin{equation}\label{eq4.5.3}
  \mathbf{c}^*  = \text{argmin}_{\mathbf{c} \in \mathbb{R}^l } \big\|f(\mathbf{c};\cdot) - f^*(\cdot)\big\|^2_{L_2(q_X,Y)} .
\end{equation}
To an extent, the problem of the choice of the best model among the set $\{f_{\mathbf{z},j}\}_{j=1}^{l}$ can be reduced to the minimization in  (\ref{eq4.5.3}) because it is clear that 
\begin{equation}\label{eq4.5.4}
  \min_{\mathbf{c} \in \mathbb{R}^l } \big\|f(\mathbf{c};\cdot) - f^*(\cdot)\big\|^2_{L_2(q_X,Y)} \leq \min_{j\in \{1,...,l \}} \big\|f_{\mathbf{z},j}(\cdot) - f^*(\cdot)\big\|^2_{L_2(q_X,Y)} .
\end{equation}
It is easy to see that the minimizer $\mathbf{c} =\mathbf{c}^*$ solves the system $\mathbf{G}_l\mathbf{c} = \mathbf{g}_l$ with the Gram matrix $ \mathbf{G}_l = \left( \langle  f_{\mathbf{z},j}(\cdot),f_{\mathbf{z},k}(\cdot)\rangle_{L_2(q_X,Y)} \right)_{j,k=1}^l $ and the vector $\mathbf{g}_l = \left( \langle f^*(\cdot) ,f_{\mathbf{z},j}(\cdot)\rangle_{L_2(q_X,Y)} \right)_{j=1}^l $.

Observe, however, that neither the matrix $\mathbf{G}_l$ nor the vector $\mathbf{g}_l$ is accessible, because $q_X$ and $f^*(\cdot)$ are unknown. But they can be well approximated using $\{\beta(x_i)\}_{i=1}^n $ and the source data sample 
$\mathbf{z} =$ $\{(x_i, y_i)\}_{i=1}^n \in X \times Y$  drawn from an unknown joint probability distribution $p(x,y)$.

To discuss this, we recall (see e.g., \cite{meunier2024optimal}) that the regression function $f^*$ allows a representation via the so-called Markov kernel $\mathcal{K}(dy,x)$ as follows
{$$f^*(x) = \int_{Y}y \mathcal{K}(dy,x).$$} Then,
\begin{align} \label{eq4.5.5}
\langle f^*(\cdot),f_{\textbf{z},j}(\cdot)\rangle_{L_2(q_X,Y)} &= \biggl \langle \int_{Y}y \mathcal{K}(dy,\cdot),f_{\textbf{z},j}(\cdot)\biggr \rangle_{L_2(q_X,Y)} \notag \\ 
&= \int_{X}\biggl\langle \int_{Y}y \mathcal{K}(dy,x),f_{\textbf{z},j}(x)\biggr\rangle_{Y}dq_X(x) \notag \\ 
&=  \int_{X}\int_{Y}\langle y,f_{\textbf{z},j}(x)\rangle_{Y}\beta(x) \mathcal{K}(dy,x)dp_X(x) \notag \\  
&= \mathbb{E}_p\bigl[\beta(x) \langle y,f_{\textbf{z},j}(x)\rangle_{Y}\bigr].
\end{align}

The above expectation can be approximated by Monte-Carlo integration using the samples from $\mathbf{z} =\{(x_i, y_i)\}_{i=1}^n \in X \times Y$. By standard concentration argument, such as Lemma \ref{PINELIS:INEQUALITY}, the error of that integration formula, with probability at least $1 -\delta$, is of order $(n)^{-1/2}\log(\frac{1}{\delta})$, that is, 
\begin{align} \label{eq:monte1}
\langle f^*(\cdot),f_{\textbf{z},j}(\cdot)\rangle_{L_2(q_X,Y)} = \frac{1}{n}\sum_{i=1}^{n} \beta(x_i) \langle y_i,f_{\textbf{z},j}(x_i)\rangle_{Y} + \mathcal{O}\left( \frac{\log (\frac{1}{\delta})}{\sqrt{n} }\right).
\end{align}

For a similar reason, with probability at least $1 -\delta$, one has
\begin{align} \label{eq:monte3}
\langle  f_{\textbf{z},j}(\cdot),f_{\textbf{z},k}(\cdot)\rangle_{L_2(q_X,Y)}  = \frac{1}{n}\sum_{i=1}^{n} \beta(x_i) \langle f_{\textbf{z},j}(x_i),f_{\textbf{z},k}(x_i)\rangle_{Y}+ \mathcal{O}\left( \frac{\log(\frac{1}{\delta})}{\sqrt{n} }\right).
\end{align}
The above formulae allow for efficient and accurate calculations of the entries of the matrix $\mathbf{G}_l$ and the right-hand side vector $\mathbf{g}_l$ of the linear system defining the minimizer $\mathbf{c} = \mathbf{c}^*$ of $\big\|f(\mathbf{c};\cdot) - f^*(\cdot)\big\|^2_{L_2(q_X,Y)}$. Namely, the corresponding estimators for $\mathbf{G}_l$ and $\mathbf{g}_l$ can be given respectively as follows:
\begin{equation}\label{agg G}
\widetilde{\mathbf{G}}_l 
= \left( \frac{1}{n} \sum_{i=1}^{n}  
   \, \beta(x_i) \langle f_{\mathbf{z},j}(x_i),f_{\mathbf{z},k}(x_i)\rangle_{Y} \right)_{j,k=1}^{l},
\end{equation}

\begin{equation}\label{agg g}
\tilde{\mathbf{g}}_l 
= \left( \frac{1}{n} \sum_{i=1}^{n} 
   \, \beta(x_i) \langle y_i,f_{\mathbf{z},j}(x_i)\rangle_{Y} \right)_{j=1}^{l}.
\end{equation}

\vspace{0.5em}

\begin{proposition}
Let $\mathbf{G}_l,\mathbf{g}_l,\tilde{\mathbf{G}}_l$ and $\tilde{\mathbf{g}}_l$ be defined as above. Then with probability at least $1-\delta$, the following bounds hold:
\begin{align}
\big\| \textbf{g}_l - \tilde{\textbf{g}}_l \big\|_{\mathbb{R}^l} 
&\leq c \, \sqrt{l}\log \left( \frac{1}{\delta} \right) n^{-1/2} , \label{eq:g-bound}\\[0.6em]
\big\| \textbf{G}_l - \widetilde{\textbf{G}}_l \big\|_{op} 
&\leq c \, l\,\log  \left( \frac{1}{\delta} \right) n^{-1/2}  
, \label{eq:G-bound}
\end{align}
where $c$ is some generic constant.
\end{proposition}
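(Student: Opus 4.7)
The plan is to recognize each of $\tilde{\mathbf{g}}_l$ and $\widetilde{\mathbf{G}}_l$ as an empirical mean of i.i.d.\ random variables taking values in a suitable Hilbert space, and then to apply the Pinelis concentration inequality (Lemma~\ref{PINELIS:INEQUALITY}) once in each case. Note first that, given the models $\{f_{\mathbf{z},j}\}_{j=1}^l$ (which are treated as fixed functions for this result, conditional on the sample used to build them), the entries of $\mathbf{g}_l$ and $\mathbf{G}_l$ are expectations under $p$ by the identity (\ref{eq4.5.5}), while $\tilde{\mathbf{g}}_l$ and $\widetilde{\mathbf{G}}_l$ are the corresponding Monte Carlo averages over the source sample. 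Throughout, I will use Assumptions~\ref{kernel-bound}--\ref{bound-Y-asssumption}, which give $\beta(x)\le b$, $\|y\|_Y\le M'$, and the standard pointwise bound $\|f_{\mathbf{z},j}(x)\|_Y\le \kappa \|f_{\mathbf{z},j}\|_{\mathcal G}\le F$ for some uniform constant $F$ (finite for any reasonable regularized estimator).

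For the vector bound (\ref{eq:g-bound}), I view $\mathbb{R}^l$ as a Hilbert space with its Euclidean inner product and define the random vector
\[
\xi(x,y) = \bigl(\beta(x)\,\langle y, f_{\mathbf{z},j}(x)\rangle_Y\bigr)_{j=1}^l \in \mathbb{R}^l.
\]
Then $\mathbb{E}_p[\xi] = \mathbf{g}_l$ and $\tilde{\mathbf{g}}_l = \frac{1}{n}\sum_{i=1}^n \xi(x_i,y_i)$. By Cauchy--Schwarz in $Y$ and the stated bounds,
\[
\|\xi\|_{\mathbb{R}^l}^2 \;\le\; \sum_{j=1}^l b^2 (M')^2 F^2 \;=\; b^2 (M')^2 F^2\, l,
\]
so $\|\xi\|_{\mathbb{R}^l}\le bM'F\sqrt{l}$ almost surely and $\mathbb{E}\|\xi\|_{\mathbb{R}^l}^2\le b^2(M')^2F^2 l$. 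Taking $L = 2bM'F\sqrt{l}$ and $\sigma = bM'F\sqrt{l}$ in Lemma~\ref{PINELIS:INEQUALITY} yields (\ref{eq:g-bound}) with an explicit constant absorbed into $c$.

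For the matrix bound (\ref{eq:G-bound}), I work in the Hilbert space of $l\times l$ real matrices equipped with the Frobenius inner product, and define
\[
\eta(x) = \bigl(\beta(x)\,\langle f_{\mathbf{z},j}(x),f_{\mathbf{z},k}(x)\rangle_Y\bigr)_{j,k=1}^l,
\]
so that $\mathbb{E}_p[\eta] = \mathbf{G}_l$ and $\widetilde{\mathbf{G}}_l = \frac{1}{n}\sum_{i=1}^n \eta(x_i)$. The same Cauchy--Schwarz bound gives
\[
\|\eta\|_F^2 \;\le\; \sum_{j,k=1}^l b^2 F^4 \;=\; b^2 F^4\, l^2,
\]
hence $\|\eta\|_F\le b F^2 l$ a.s.\ and $\mathbb{E}\|\eta\|_F^2\le b^2 F^4 l^2$. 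Applying Lemma~\ref{PINELIS:INEQUALITY} with $L=2bF^2 l$ and $\sigma = bF^2 l$ gives
\[
\bigl\|\mathbf{G}_l - \widetilde{\mathbf{G}}_l\bigr\|_F
\;\le\; c\, l\,\log(2/\delta)\, n^{-1/2}
\]
with probability at least $1-\delta$, and (\ref{eq:G-bound}) follows from the elementary inequality $\|M\|_{op}\le \|M\|_F$.

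The only delicate point is the uniform pointwise boundedness $\|f_{\mathbf{z},j}(x)\|_Y\le F$: this is not an obstacle in substance but must be stated explicitly, since it is only implicit in the earlier setup. It holds because each $f_{\mathbf{z},j}$ lies in the vRKHS $\mathcal G$ induced by a multiplicative kernel with $k(x,x)\le \kappa^2$, so $\|f(x)\|_Y\le \kappa \|f\|_{\mathcal G}$ (see the reproducing property in Section~\ref{sec:preliminaries}), and regularization combined with Assumption~\ref{bound-Y-asssumption} yields an $n$-independent bound on $\|f_{\mathbf{z},j}\|_{\mathcal G}$. With this observation in place, all constants in $c$ depend only on $b$, $\kappa$, $M'$, and the model class, and the two bounds follow from a single application of Lemma~\ref{PINELIS:INEQUALITY} in each case.
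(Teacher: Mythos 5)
Your proof is correct and reaches the stated bounds, but it takes a different route from the paper's. The paper argues entrywise: it invokes the scalar Monte--Carlo error estimates \eqref{eq:monte1} and \eqref{eq:monte3} for each coordinate of $\mathbf{g}_l$ and each entry of $\mathbf{G}_l$, obtains $\max_{j,k}|\tilde G_{j,k}-G_{j,k}|\le c\log(1/\delta)n^{-1/2}$, and then assembles the vector and operator norms via $\|v\|_{\mathbb R^l}\le\sqrt{l}\max_j|v_j|$ and $\|M\|_{op}\le l\max_{j,k}|M_{j,k}|$. You instead apply the Hilbert-space-valued Pinelis inequality (Lemma~\ref{PINELIS:INEQUALITY}) once to the $\mathbb R^l$-valued random vector and once to the matrix-valued random variable under the Frobenius inner product, with $L,\sigma\propto\sqrt{l}$ and $\propto l$ respectively, then use $\|M\|_{op}\le\|M\|_F$. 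The two routes give identical dependence on $l$ and $n$, but yours is tidier on the probability accounting: a single application of the concentration inequality covers all entries simultaneously, whereas the paper's max over $l^2$ entries, each holding with probability $1-\delta$, strictly speaking requires a union bound (costing a $\log(l^2/\delta)$ factor that the paper silently omits). Your explicit remark that one needs a uniform pointwise bound $\|f_{\mathbf z,j}(x)\|_Y\le F$ is also well taken --- the paper only assumes $\max_j\|f_{\mathbf z,j}\|_{L_2(q_X,Y)}=\mathcal O(1)$ after the proposition, and the $\mathcal G$-norm reproducing-property bound you give is the cleanest way to justify the almost-sure boundedness that Pinelis requires. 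The one caveat both proofs share, which you flag correctly by conditioning, is that the models $f_{\mathbf z,j}$ are built from the same sample over which the Monte--Carlo averages are taken, so independence is only conditional.
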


\noindent

\begin{proof}
If $G_{j,k}, \tilde{G}_{j,k}$ denote the $(j,k)$-th entries of the matrices $\textbf{G}_l$ $\tilde{\textbf{G}}_l$, then from \eqref{eq:monte3} with probability at least $1-\delta$, it follows that
\begin{equation*}
\max_{1 \leq j,k \leq n} 
   \big| \tilde{G}_{j,k} - G_{j,k} \big|
   \;\leq\; c \; \log  \left( \frac{1}{\delta} \right) n^{-1/2}  .
\end{equation*}

\noindent
Using this bound, we obtain
\begin{equation*}
\big\| \widetilde{\textbf{G}}_l - \textbf{G}_l \big\|_{op} 
   \;\leq c\; l  \, \log  \left( \frac{1}{\delta} \right) n^{-1/2}  .
\end{equation*}

\noindent
The same reason, together with  \eqref{eq:monte1}, gives
\begin{center}
$\big\| \tilde{\textbf{g}}_l - \textbf{g}_l \big\|_{\mathbb{R}^l} 
   \;\leq c\; \log  \left( \frac{1}{\delta} \right) n^{-1/2} .$
\end{center}

\end{proof}

Note that with matrix $\widetilde{\mathbf{G}}_l$ at hand one can easily check whether or not it is well-conditioned and $\widetilde{\mathbf{G}}_l^{-1}$ exists. If it is not the case, then some approximants $f_{\mathbf{z},j}$ can be withdrawn from consideration without loss. Therefore, we assume that $\| \widetilde{\mathbf{G}}_l^{-1} \|_{op} = \mathcal{O}(1)$, as well as that $\max_j\|f_{\mathbf{z}, j}\|_{L_2(q_X,Y)} = \mathcal{O}(1)$.

We assume also that $n$ is large enough
s.t. with probability at least $1-\delta$ we have

\begin{equation}\label{largens.t.}
\big\|\mathbf{G}_l-\widetilde{\mathbf{G}}_l\big\|_{op} < \frac{1}{\|\widetilde{\mathbf{G}}_l^{-1}\|_{op}} \quad.
\end{equation}

\noindent
Next we can express $\mathbf{G}_l^{-1}$ as

\[
\mathbf{G}_l^{-1} = \widetilde{\mathbf{G}}_l^{-1}(\mathbf{G}_l\widetilde{\mathbf{G}}_l^{-1})^{-1} 
= \widetilde{\mathbf{G}}_l^{-1}\big(I - (I - \mathbf{G}_l\widetilde{\mathbf{G}}_l^{-1})\big)^{-1} 
= \widetilde{\mathbf{G}}_l^{-1}\big(I - (\widetilde{\mathbf{G}}_l-\mathbf{G}_l)\widetilde{\mathbf{G}}_l^{-1}\big)^{-1}.
\]

\noindent
Then from (\ref{largens.t.}) it follows that the Neumann series for \((I - (\widetilde{\mathbf{G}}_l-\mathbf{G}_l)\widetilde{\mathbf{G}}_l^{-1})^{-1}\) converges, and we obtain the following bound:
\[
\|\mathbf{G}_l^{-1}\|_{op} 
\leq \frac{\|\widetilde{\mathbf{G}}_l^{-1}\|_{op}}{1 - \|\widetilde{\mathbf{G}}_l^{-1}\|_{op} \|\mathbf{G}_l-\widetilde{\mathbf{G}}_l\|_{op}}
= \mathcal{O}(1).
\]

\begin{theorem}\label{Tagg}
Consider $f(\widetilde{\mathbf{c}};\cdot)  = \sum_{j=1}^l \tilde{c}_j f_{\mathbf{z}, j}(\cdot)$, 
where $\widetilde{\mathbf{c}}= (\tilde{c}_1, \tilde{c}_2, \ldots, \tilde{c}_l) = \widetilde{\mathbf{G}}_l^{-1}\tilde{\mathbf{g}}_l$, and assume that (\ref{largens.t.}) holds, then with probability at least $1-\delta$ we have
\[
\big\|f^*(\cdot) - f(\widetilde{\mathbf{c}};\cdot)\big\|_{L_2(q_X,Y)} = \min_{c_j} \Big\| f^*(\cdot) - \sum_{j=1}^l c_j f_{\mathbf{z},j}(\cdot)\Big\|_{L_2(q_X,Y)}
+ \mathcal{O}\left(l^{\frac{3}{2}} \log\left(\frac{1}{\delta}\right) n^{-\frac{1}{2}}\right).
\]
\end{theorem}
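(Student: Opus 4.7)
The plan is to compare $f(\widetilde{\mathbf{c}};\cdot)$ with the oracle linear combination $f(\mathbf{c}^*;\cdot)$, which already attains the minimum appearing on the right-hand side of the statement, so that the whole argument reduces to controlling the perturbation $\|f(\mathbf{c}^*;\cdot) - f(\widetilde{\mathbf{c}};\cdot)\|_{L_2(q_X,Y)}$. By the reverse triangle inequality,
\[
\bigl|\|f^*-f(\widetilde{\mathbf{c}};\cdot)\|_{L_2(q_X,Y)} - \|f^*-f(\mathbf{c}^*;\cdot)\|_{L_2(q_X,Y)}\bigr|
\;\le\; \|f(\mathbf{c}^*;\cdot)-f(\widetilde{\mathbf{c}};\cdot)\|_{L_2(q_X,Y)},
\]
so the entire burden is to show that the last quantity is of order $l^{3/2}\log(1/\delta)\,n^{-1/2}$.

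To handle the perturbation, I would start from the normal equations $\mathbf{G}_l\mathbf{c}^* = \mathbf{g}_l$ and $\widetilde{\mathbf{G}}_l\widetilde{\mathbf{c}} = \widetilde{\mathbf{g}}_l$. Subtracting, and adding/subtracting $\mathbf{G}_l\widetilde{\mathbf{c}}$, yields the residual identity
\[
\mathbf{G}_l(\mathbf{c}^*-\widetilde{\mathbf{c}}) \;=\; (\mathbf{g}_l - \widetilde{\mathbf{g}}_l) + (\widetilde{\mathbf{G}}_l - \mathbf{G}_l)\widetilde{\mathbf{c}} \;=:\; \mathbf{b}.
\]
The central observation is the energy identity
\[
\|f(\mathbf{c}^*;\cdot)-f(\widetilde{\mathbf{c}};\cdot)\|_{L_2(q_X,Y)}^2
\;=\; (\mathbf{c}^*-\widetilde{\mathbf{c}})^T\mathbf{G}_l(\mathbf{c}^*-\widetilde{\mathbf{c}})
\;=\; \mathbf{b}^T \mathbf{G}_l^{-1}\mathbf{b}
\;\le\; \|\mathbf{G}_l^{-1}\|_{op}\,\|\mathbf{b}\|_{\mathbb{R}^l}^2,
\]
which involves $\mathbf{G}_l^{-1}$ rather than $\mathbf{G}_l$; this is exactly what will save the needed factor of $\sqrt{l}$ later on.

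It remains to bound the two ingredients. From (\ref{eq:g-bound}) and (\ref{eq:G-bound}), with probability at least $1-\delta$,
\[
\|\mathbf{g}_l-\widetilde{\mathbf{g}}_l\|_{\mathbb{R}^l} = \mathcal{O}\!\bigl(\sqrt{l}\log(1/\delta)\,n^{-1/2}\bigr),
\qquad
\|\widetilde{\mathbf{G}}_l-\mathbf{G}_l\|_{op} = \mathcal{O}\!\bigl(l\log(1/\delta)\,n^{-1/2}\bigr).
\]
The standing assumptions $\|\widetilde{\mathbf{G}}_l^{-1}\|_{op}=\mathcal{O}(1)$ and $\max_j\|f_{\mathbf{z},j}\|_{L_2(q_X,Y)}=\mathcal{O}(1)$, combined with the boundedness of $\beta$ and of $\|y\|_Y$, give $|\widetilde{g}_j| = \mathcal{O}(1)$ entrywise, whence $\|\widetilde{\mathbf{g}}_l\|_{\mathbb{R}^l} = \mathcal{O}(\sqrt{l})$ and therefore $\|\widetilde{\mathbf{c}}\|_{\mathbb{R}^l} = \|\widetilde{\mathbf{G}}_l^{-1}\widetilde{\mathbf{g}}_l\|_{\mathbb{R}^l} = \mathcal{O}(\sqrt{l})$. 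Substituting,
\[
\|\mathbf{b}\|_{\mathbb{R}^l} \;\le\; \|\mathbf{g}_l-\widetilde{\mathbf{g}}_l\|_{\mathbb{R}^l} + \|\widetilde{\mathbf{G}}_l-\mathbf{G}_l\|_{op}\|\widetilde{\mathbf{c}}\|_{\mathbb{R}^l}
\;=\; \mathcal{O}\!\bigl(l^{3/2}\log(1/\delta)\,n^{-1/2}\bigr).
\]
For $\|\mathbf{G}_l^{-1}\|_{op}=\mathcal{O}(1)$ I would invoke the Neumann-series argument carried out just before the theorem using (\ref{largens.t.}). Plugging these into the energy bound gives $\|f(\mathbf{c}^*;\cdot)-f(\widetilde{\mathbf{c}};\cdot)\|_{L_2(q_X,Y)} = \mathcal{O}(l^{3/2}\log(1/\delta)\,n^{-1/2})$, which, via the reverse triangle inequality above, closes the argument.

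The main obstacle is precisely bookkeeping the exponent on $l$: the naive route that first bounds $\|\mathbf{c}^*-\widetilde{\mathbf{c}}\|_{\mathbb{R}^l} = \mathcal{O}(l^{3/2}n^{-1/2})$ and then converts to the $L_2(q_X,Y)$-norm via $\sqrt{\|\mathbf{G}_l\|_{op}}\le\mathcal{O}(\sqrt{l})$ yields only $\mathcal{O}(l^{2}n^{-1/2})$. The residual identity $\mathbf{b} = \mathbf{G}_l(\mathbf{c}^*-\widetilde{\mathbf{c}})$ and the rewriting $\mathbf{e}^T\mathbf{G}_l\mathbf{e} = \mathbf{b}^T\mathbf{G}_l^{-1}\mathbf{b}$ is the one-line manipulation that replaces $\|\mathbf{G}_l\|_{op}$ by $\|\mathbf{G}_l^{-1}\|_{op}$ and recovers the advertised $l^{3/2}$ rate.
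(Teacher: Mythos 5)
Your proposal is correct, and it reaches the stated $l^{3/2}$ rate by a genuinely different route than the paper. The paper works entirely at the level of coefficient vectors: it bounds $\|\widetilde{\mathbf{c}}-\mathbf{c}^*\|_{\mathbb{R}^l} \le \|\widetilde{\mathbf{G}}_l^{-1}\|_{op}\bigl(\|\tilde{\mathbf{g}}_l-\mathbf{g}_l\|_{\mathbb{R}^l}+\|\mathbf{G}_l-\widetilde{\mathbf{G}}_l\|_{op}\|\mathbf{c}^*\|_{\mathbb{R}^l}\bigr) \lesssim l\log(1/\delta)n^{-1/2}$ and then converts to the $L_2(q_X,Y)$ norm by the crude step $\|\sum_j e_j f_{\mathbf{z},j}\| \le \sqrt{l}\,\|\mathbf{e}\|_{\mathbb{R}^l}\max_j\|f_{\mathbf{z},j}\|$, paying a factor $\sqrt{l}$ there; note that its displayed $\mathcal{O}(l)$ coefficient bound tacitly uses $\|\mathbf{c}^*\|_{\mathbb{R}^l}=\mathcal{O}(1)$, which is not among the stated standing assumptions (the natural estimate $\|\mathbf{c}^*\|_{\mathbb{R}^l}\le\|\mathbf{G}_l^{-1}\|_{op}\|\mathbf{g}_l\|_{\mathbb{R}^l}$ only gives $\mathcal{O}(\sqrt{l})$). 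Your residual/energy-norm identity $\|f(\mathbf{c}^*;\cdot)-f(\widetilde{\mathbf{c}};\cdot)\|^2=\mathbf{b}^T\mathbf{G}_l^{-1}\mathbf{b}$ sidesteps both issues: it converts directly to the function norm without the extra $\sqrt{l}$, and it tolerates the weaker bound $\|\widetilde{\mathbf{c}}\|_{\mathbb{R}^l}=\mathcal{O}(\sqrt{l})$ that you actually justify, so it arrives at the same $l^{3/2}\log(1/\delta)n^{-1/2}$ perturbation without any unstated assumption on the size of the oracle coefficients. The only small point worth tightening is the entrywise bound $|\tilde{g}_j|=\mathcal{O}(1)$: the cleanest justification is $\tilde{g}_j=g_j+\mathcal{O}(\log(1/\delta)n^{-1/2})$ together with $|g_j|\le\|f^*\|_{L_2(q_X,Y)}\|f_{\mathbf{z},j}\|_{L_2(q_X,Y)}=\mathcal{O}(1)$, rather than appealing only to the boundedness of $\beta$ and $\|y\|_Y$.
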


\medskip
\noindent

\begin{proof}
Consider the minimizer $\mathbf{c}^*= (c^*_1, c^*_2, \ldots, c^*_l)$, which solves the system $\mathbf{G}_l\mathbf{c} = \mathbf{g}_l$, and the corresponding linear combination $f(\mathbf{c}^*;\cdot)$. 
Then, with probability at least $1-\delta$ we have
\[
\big\|\widetilde{\mathbf{c}}-\mathbf{c}^*\big\|_{\mathbb{R}^l} \leq \|\widetilde{\mathbf{G}}_l^{-1}\|_{op}
\Big( \big\| \tilde{\mathbf{g}}_l - \mathbf{g}_l \big\|_{\mathbb{R}^l} + \big\|\mathbf{G}_l-\widetilde{\mathbf{G}}_l\big\|_{op}\|\mathbf{c}^*\|_{\mathbb{R}^l}\Big)
\leq c\;l \log \left(\frac{1}{\delta} \right)  \; n^{-1/2}.
\]

\noindent
Moreover,
\[
\big\|f^*(\cdot) - f(\widetilde{\mathbf{c}};\cdot)\big\|_{L_2(q_X,Y)} \leq \big\|f^*(\cdot) - f(\mathbf{c}^*;\cdot)\big\|_{L_2(q_X,Y)} + \big\|f(\mathbf{c}^*;\cdot) - f(\widetilde{\mathbf{c}};\cdot)\big\|_{L_2(q_X,Y)},
\]
\[
\big\|f(\mathbf{c}^*;\cdot) - f(\widetilde{\mathbf{c}};\cdot)\big\|_{L_2(q_X,Y)} 
= \Big\|\sum_{j=1}^l (c^*_j-\tilde{c}_j) f_{\mathbf{z},j}\Big\|_{L_2(q_X,Y)}
\leq \sqrt{l}\,\big\|\widetilde{\mathbf{c}}-\mathbf{c}^*\big\|_{\mathbb{R}^l}\,\max_j\|f_{\mathbf{z}, j}\|_{L_2(q_X,Y)}.
\]

The statement of the theorem follows from the definition of $\mathbf{c}^*$ and from the above three inequalities.
\end{proof}

In the next section, we illustrate the performance of the aggregated model $f(\widetilde{\mathbf{c}};\cdot)$ by considering two settings. 

First, we consider the case when the models $f_{\mathbf{z},j}$ are constructed according to (\ref{predictor})  for a fixed scalar multiplicative kernel $k(x,t)$ but for various values of the regularization parameter $\lambda =  \lambda_1,...,\lambda_l$, i.e., $f_{\mathbf{z},j} = \widehat{f}_{\mathbf{z},\lambda_j} $, $j= 1,...,l$. In this setting the model $f(\widetilde{\mathbf{c}};\cdot)$ described in Theorem \ref{Tagg} can be seen as the resolver of the regularization parameter choice issue.

Second, we consider different multiplicative kernels $k(x,t) =k_1(x,t), k_2(x,t),...,k_l(x,t) $ and construct the predictors $\widehat{f}_{\mathbf{z},\lambda_{s} }(k_j; \cdot) $ according to (\ref{predictor}) for various values of the regularization parameter $\lambda =  \lambda_{s}$, $s=1,...,l$, where $\widehat{f}_{\mathbf{z},\lambda }(k_j; \cdot) $ is just a more specific notation for $\widehat{f}_{\mathbf{z},\lambda}(\cdot)$ defined by (\ref{predictor}) with a fixed kernel $k(x,t) =k_j(x,t)$. Then,  for each $k(x,t) =k_j(x,t)$ we aggregate the models $f_{\mathbf{z},s} = \widehat{f}_{\mathbf{z},\lambda_{s} }(k_j; \cdot)$, $s= 1,...,l$,  again in the way described in Theorem \ref{Tagg}, that gives us the set of predictors 
\begin{equation}\label{agglam}
f_{\mathbf{z}}(k_j; \cdot) = f(\widetilde{\mathbf{c}}_j;\cdot)  = \sum_{s=1}^l \tilde{c}^j_s \widehat{f}_{\mathbf{z},\lambda_{s} }(k_j; \cdot), \quad j = 1,...,l,
\end{equation}
and each of them can be seen as the resolver of the regularization parameter choice issue for learning with a fixed kernel $k(x,t) =k_j(x,t)$.

Finally, we use one more time the procedure described in Theorem \ref{Tagg} to aggregate the models $f_{\mathbf{z},j}(\cdot) = f_{\mathbf{z}}(k_j; \cdot)$  associated with different kernels. Then the above procedure becomes an algorithm for multiple kernel learning. From (\ref{eq4.5.4}) and Theorem \ref{Tagg} it follows that the predictor resulted from such multiple kernel learning is expected to perform at the level of the best model among the ones involved in the aggregation. This is illustrated by Table \ref{tab:kernel_performance} below. To the best of our knowledge, the considered algorithm is the first one proposed for multiple kernel learning with functional data.

\section{Numerical Experiments}\label{numerical experiments}

In the section, we follow \cite{aspri} and present numerical experiments conducted on the \textit{Faces} dataset \cite{bainbridge2013intrinsic} 
(The 10k US Adult Faces Database), comprising 10,168 natural images uniformly 
resized to $100 \times 100$ pixels. For the experiments, we first converted 
them to grayscale images. The problem setup is formulated with the objective 
of recovering face images $y \in Y$ from their sinogram representations $x \in X$. 

Non-distorted face images $y_i$ and their sinograms $x_i$ are served as training source data $\mathbf{z} =$ $\{(x_i, y_i)\}_{i=1}^n$, while the sinograms of blurred images, which have been not included in $\mathbf{z}$, or their blurred sinograms, are treated as unlabeled inputs $\{x'_i\}_{i=1}^m$ sampled from the marginal target distribution. They are used for calculating the approximate reweighting function (\ref{beta}) and also as inputs for tested predictors reconstructing the corresponding non-distorted face images considered as ground truth. 

Before proceeding, we recall some necessary definitions.

\begin{definition}[Radon Transform]
For a two-dimensional function $f \in L^1(\mathbb{R}^2)$, the Radon transform 
$R f(\theta, s)$ is defined as
\[
R f(\theta, s) = \int_{-\infty}^{\infty} f(s \cos \theta - t \sin \theta, 
\, s \sin \theta + t \cos \theta) \, dt,
\]
where $\theta \in [0, \pi)$ denotes the projection angle and $s \in \mathbb{R}$ 
denotes the distance from the origin.
\end{definition}

\begin{definition}[Inverse Radon Transform]
Let \( f \in L^1(\mathbb{R}^2) \) and let \( Rf(\theta,s) \) denote its Radon transform.
The inverse Radon transform reconstructs \( f \) from its projections via
\[
f(t,\tau)
=
\int_{0}^{\pi}
\bigl(Rf(\theta,\cdot) * h\bigr)
\bigl(t\cos\theta + \tau\sin\theta\bigr)\, d\theta,
\qquad (t,\tau)\in\mathbb{R}^2,\ \theta\in[0,\pi).
\]
Here \( * \) denotes convolution with respect to the variable \( s \).
The filter \( h \) is defined in the Fourier domain by
\[
(\mathcal{F}h)(\xi)=|\xi|, \qquad \xi\in\mathbb{R},
\]
where \( \mathcal{F}h \) denotes the one-dimensional Fourier transform of \( h \).

\end{definition}

\begin{definition}[Convolution of Two Matrices]
Let $A \in \mathbb{R}^{m \times n}$ be an image matrix and $K \in \mathbb{R}^{p \times q}$ 
a kernel (filter) matrix. The convolution $A * K$ is defined entrywise as
\[
(A * K)(i,j) = \sum_{u=1}^{p} \sum_{v=1}^{q} A(i-u,j-v) \, K(u,v),
\]
where indices outside the range of $A$ are typically handled by zero-padding.  
\end{definition}

\vspace{0.5em}
To convert face images into sinograms (via the Radon transform), we employed 85 projection angles uniformly distributed over the interval $[0^\circ, 180^\circ]$, with number of detectors fixed at 142. Hence the input space $X$ is a subset of  $R^{12,070}$ and output $Y$ space is subset of $R^{10,000}$.
Two experimental setups were considered.  

\vspace{0.5em}
In the first case, each face image is first converted into its corresponding sinogram, 
and a blur kernel is then applied to the sinogram i.e., convolved with the blur kernel matrix (See figure \ref{fig:case1}, \ref{fig:case2}). 
We restricted our study to two types of blur:

\vspace{0.5em}
\begin{itemize}
    \item A \textbf{Gaussian blur kernel} of order $9 \times 9$, with standard deviation
$\sigma = 2$, normalized such that $\sum_{u,v} G(u,v) = 1$. Each entry is defined by
\[
G(u,v)
=
\frac{\exp\!\left(-\frac{u^2+v^2}{2\sigma^2}\right)}
{\sum_{i=-4}^{4}\sum_{j=-4}^{4}
\exp\!\left(-\frac{i^2+j^2}{2\sigma^2}\right)},
\quad
u,v \in \{-4,-3,\dots,4\}.
\]
This corresponds to \emph{out-of-focus blur}.
    \vspace{0.4em}
    \item A \textbf{motion blur kernel} of order $9 \times 9$, consisting entirely of zeros 
    except for the central row, which is set to
    \[
    H = \frac{1}{9}[1,1,1,1,1,1,1,1,1].
    \]
    This corresponds to \emph{horizontal motion blur}.  
\end{itemize}

\vspace{0.5em}
\noindent
These two types of blur were selected since most real-world blurring effects can be 
reasonably seen as combinations of Gaussian (out-of-focus) and motion (directional) blur \cite{blurring}.  
The source dataset consists of clear (non-blurred) sinograms together with their 
corresponding images, whereas the target dataset consists solely of blurred sinograms. 
This setup is treated as a covariate shift adaptation problem, with an equal number 
of source and target samples to avoid imbalance.  

\vspace{0.5em}
In the second case, the blur operation is applied directly to the face images 
prior to generating the sinograms (See figure \ref{fig:case3}, \ref{fig:case4}). This results in blurred sinogram data produced 
through an indirect process. As in the first case, we constructed source and target 
datasets with an equal number of samples, ensuring that the datasets are completely 
disjoint i.e., target samples are not merely blurred versions of the source samples.

\begin{figure}[h!]
    \centering
    \includegraphics[width=0.9\linewidth]{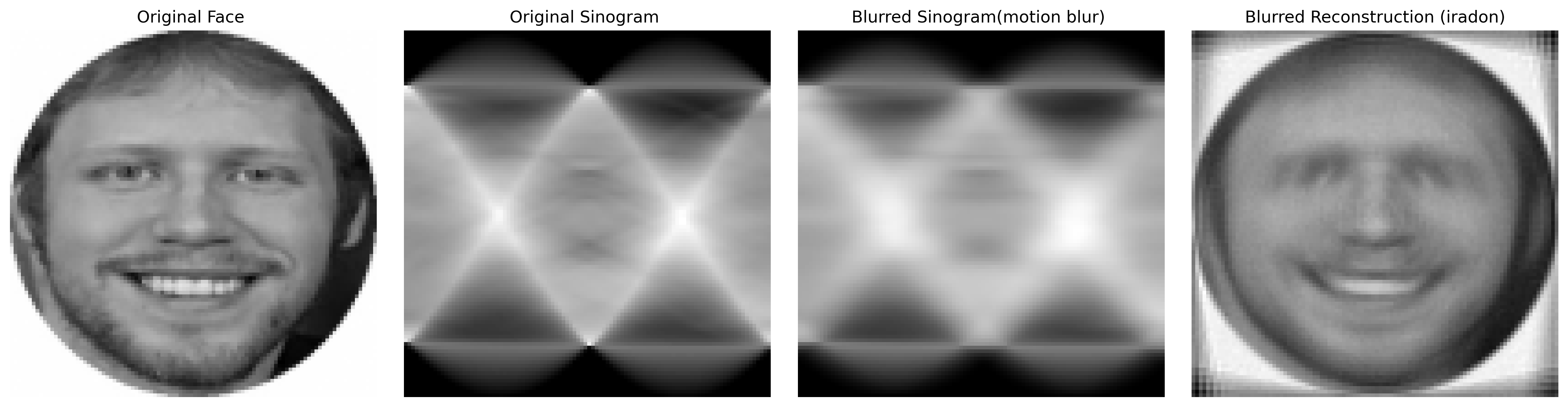}
    \caption{An illustration of Case 1: a face image converted to its sinogram, 
    followed by application of a motion blur kernel.}
    \label{fig:case1}
\end{figure}

\begin{figure}[h!]
    \centering
    \includegraphics[width=0.9\linewidth]{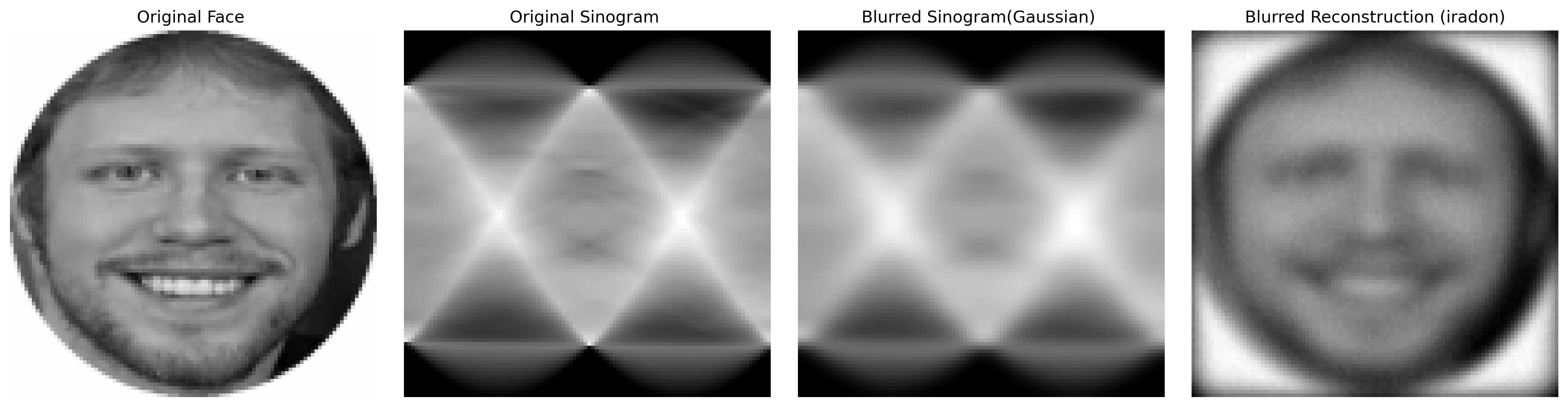}
    \caption{An illustration of Case 1: a face image converted to its sinogram, 
    followed by application of a Gaussian blur kernel.}
    \label{fig:case2}
\end{figure}

\begin{figure}[h!]
    \centering
    \includegraphics[width=0.9\linewidth]{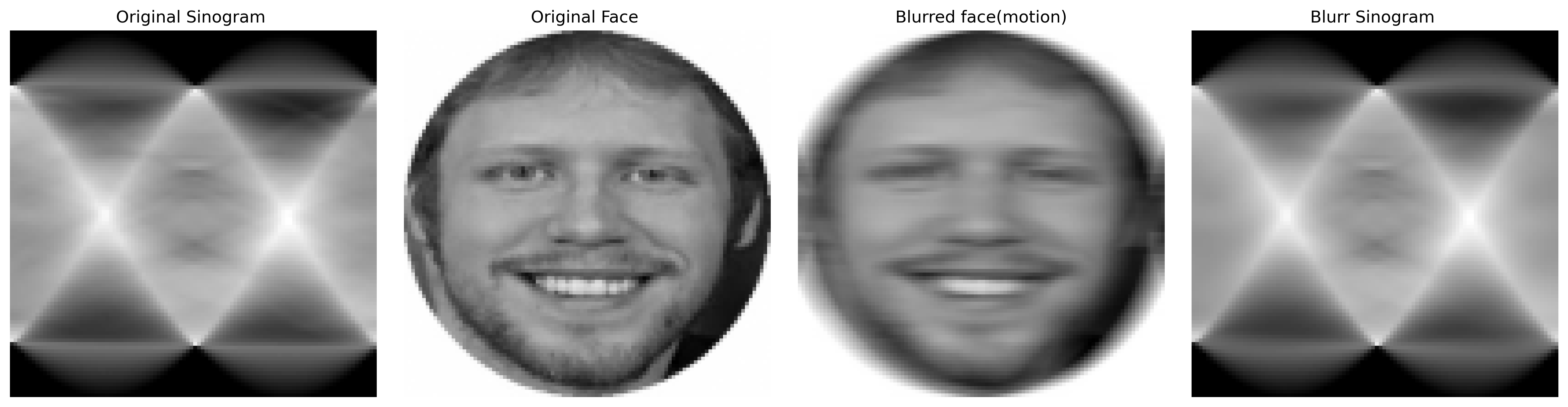}
    \caption{An illustration of Case 2: motion blur applied directly to the face image 
    prior to generating its sinogram.}
    \label{fig:case3}
\end{figure}

\begin{figure}[h!]
    \centering
    \includegraphics[width=0.89\linewidth]{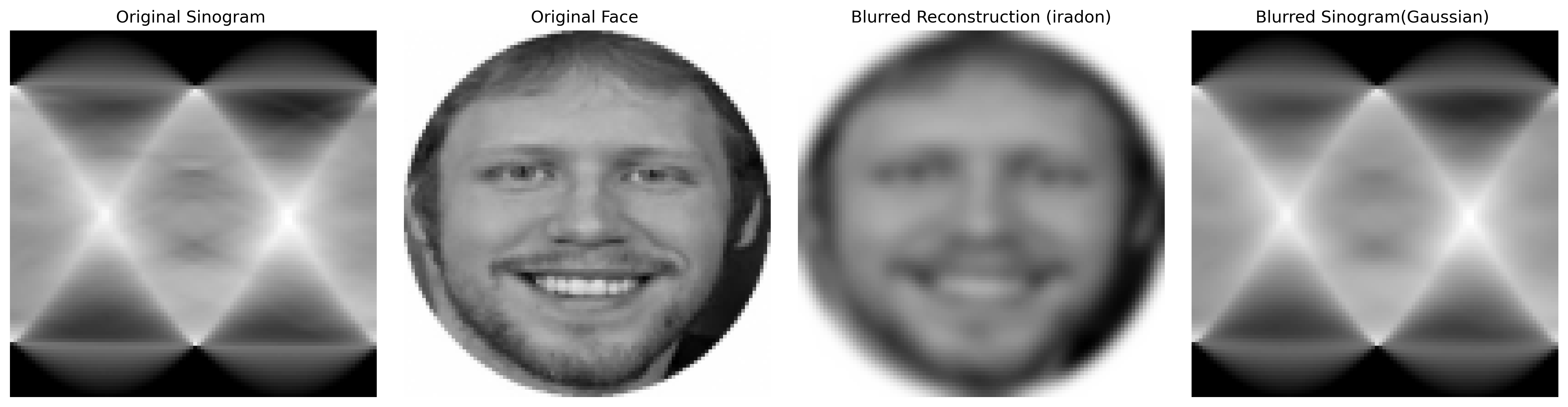}
    \caption{An illustration of Case 2: Gaussian blur applied directly to the face image 
    prior to generating its sinogram.}
    \label{fig:case4}
\end{figure}

\newpage

\vspace{0.5em}
For both cases, we conducted experiments with 1000, 2000, and 3000 samples. Across these experiments, we observed that, despite severe degradation in input data quality due to blurring, the proposed algorithm successfully preserved substantial facial information. 
In all our experiments the error was evaluated using Peak signal-to-noise ratio (PSNR), relative error (Rel. Err.), and the mean squared error (MSE) metrics.
The results of recovery of images $y = \widehat{f}_{\mathbf{z},\lambda}(x)$ by applying (\ref{predictor},  \ref{beta}) with 
\begin{equation}\label{eq:k1}
k(x, \cdot) = \exp\!\left(-\gamma \| x - \cdot\|^2_X\right), \quad \gamma = 10^{-2},
\end{equation}
to blurred sinograms $x$ are shown on Figures \ref{Case 1m},\ref{Case 1g},\ref{Case 1fm},\ref{Case fig7}. For comparison, we include the reconstruction obtained directly using the inverse radon transform (iradon), along with reconstructions from our method for 
n  = 1000, 2000, 3000. The results demonstrate that our algorithm retrieves at least as much, and often more, information than the standard iradon transform.

\newpage

\vspace{0.2em}
\begin{figure}[h!]
    \centering
    \includegraphics[width=0.99\linewidth]{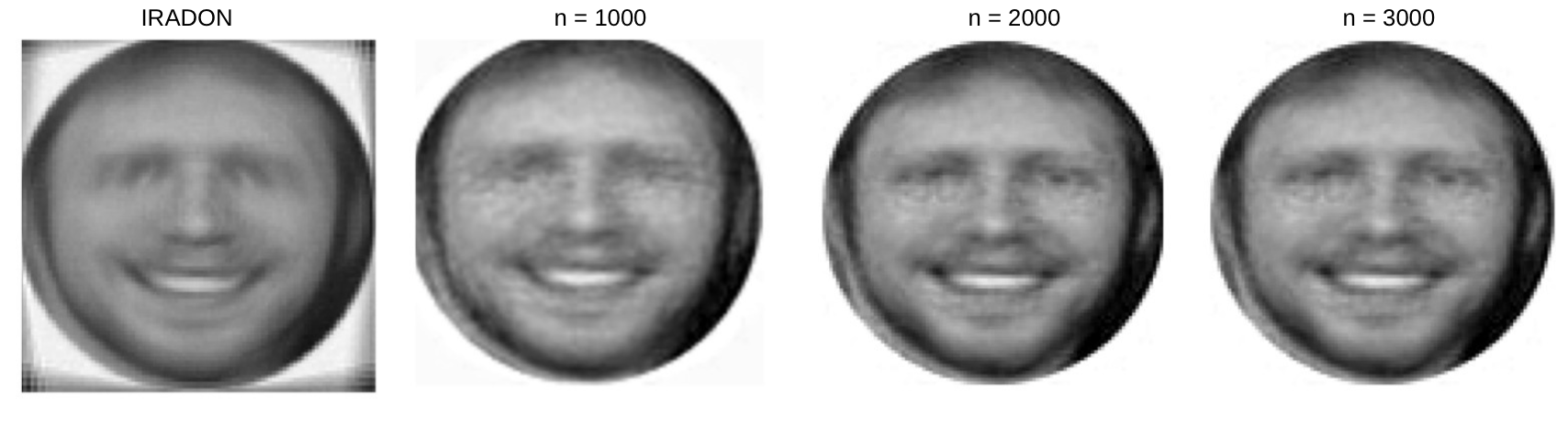}
    \caption{Reconstruction results under motion blur applied to the sinograms.}
    \label{Case 1m}
\end{figure}

\begin{figure}[h!]
    \centering
    \includegraphics[width=01\linewidth]{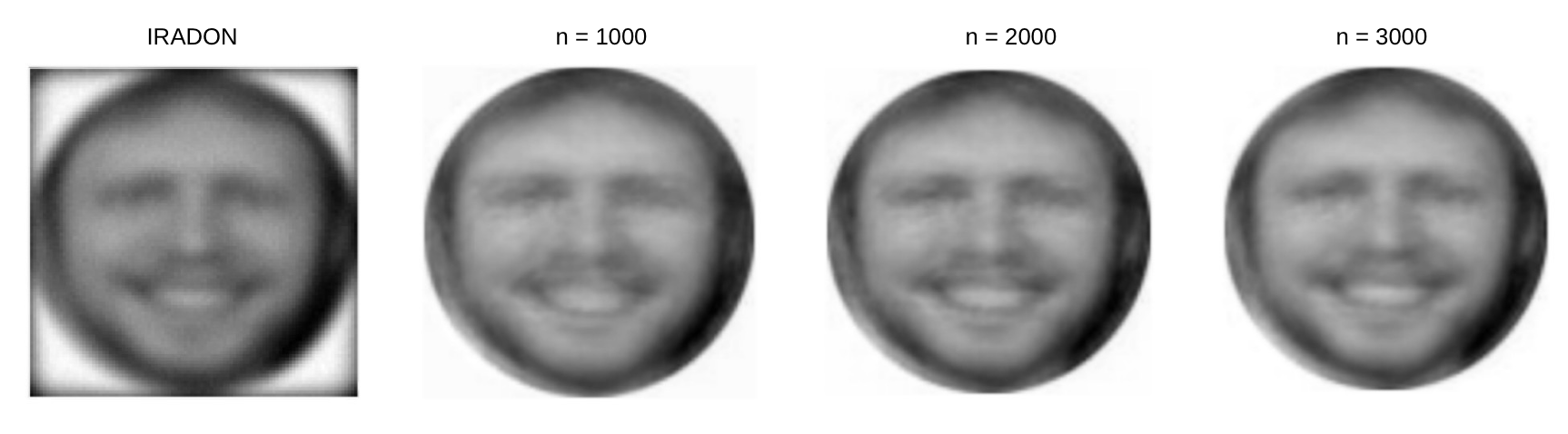}
    \caption{Reconstruction results under Gaussian blur applied to the sinograms.}
    \label{Case 1g}
\end{figure}

\begin{figure}[h!]
    \centering
    \includegraphics[width=1\linewidth]{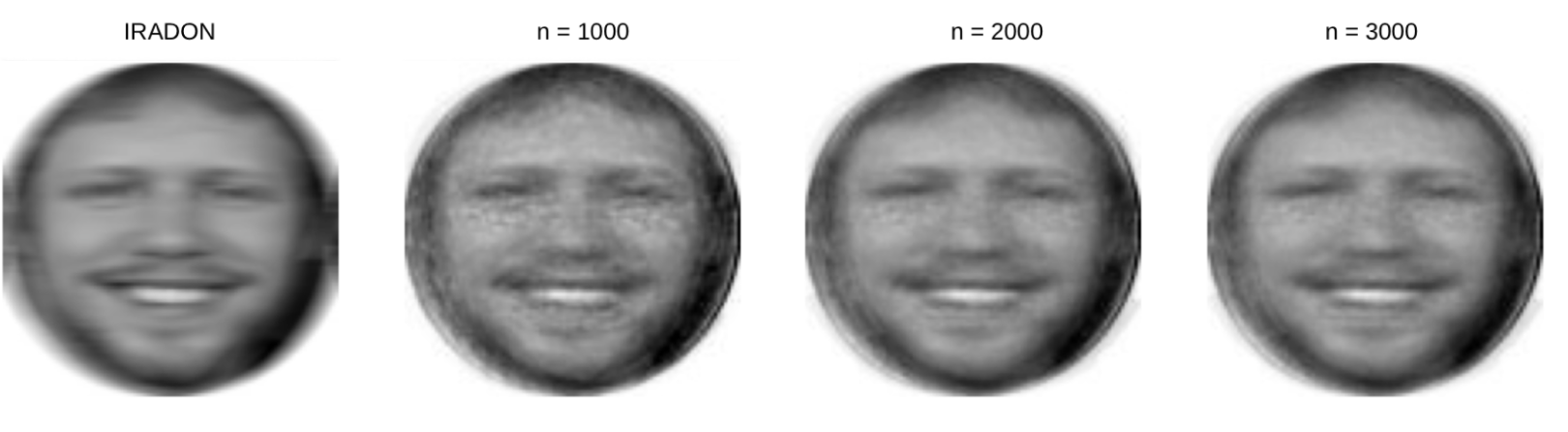}
    \caption{Reconstruction results under motion blur applied to faces.}
    \label{Case 1fm}
\end{figure}

\begin{figure}[h!]
    \centering
    \includegraphics[width=1\linewidth]{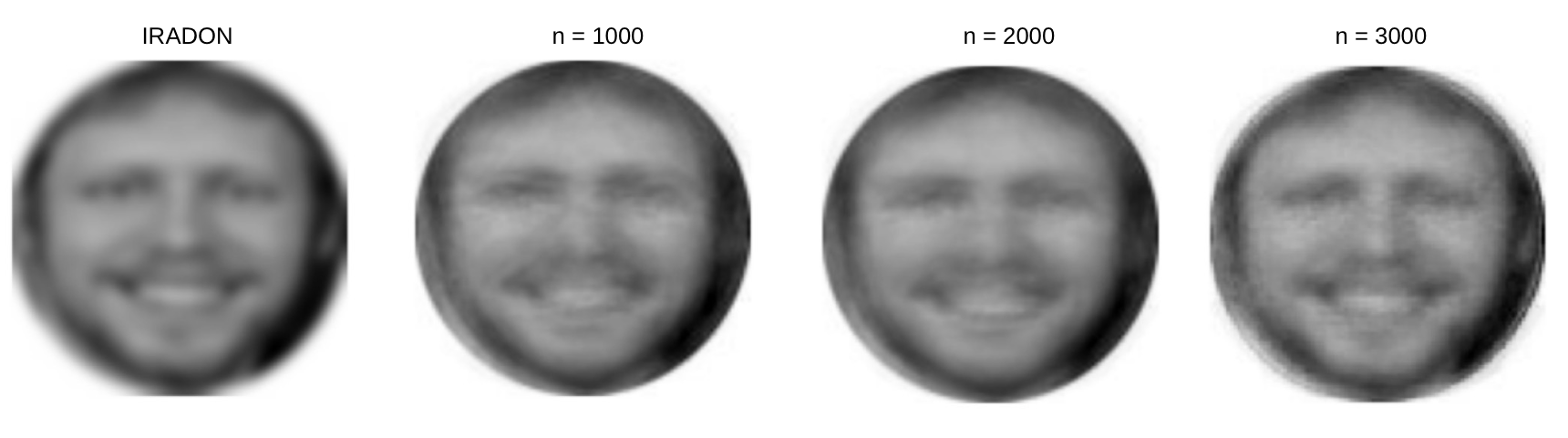}
    \caption{Reconstruction results under Gaussian blur applied to faces.}
    \label{Case fig7}
\end{figure}

\vspace{0.5em}

\newpage

\noindent
In Table~\ref{tab:lambda_sweep_all}, we report the results for Gaussian blur and motion blur with $n = m = 3000$, obtained using regularization parameters $\lambda = \lambda_j = 10^{-j-1}$ for $j = 1,2,3,4,5,6$. As mentioned earlier, we employed the KuLSIF method to compute the weights~$\beta$ (see equation (\ref{beta})). 
A Gaussian kernel (\ref{eq:k1}) was used to estimate the weights and the regularization 
parameter~$\alpha$ was selected according to the quasi-optimality criterion~\cite{ALPHA-SELECTION}.
In addition, the table  shows the performance of the proposed aggregation method (\ref{agg G}, \ref{agg g}) applied to the estimators corresponding to 6 different $\lambda$ values. As shown in the table, the aggregation approach yields results that remain very close to, though slightly below, the best outcomes obtained from the $\lambda$ sweep.

{
\renewcommand{\arraystretch}{0.1}
\begin{table}[t]
\centering
\caption{Lambda sweep results for $n=m=3000$ under different blur settings for the Gaussian kernel.}
\label{tab:lambda_sweep_all}

\begin{minipage}[t]{0.48\textwidth}
\centering
\small
\textbf{(a) Motion blur (sinograms blurred)}\\
\renewcommand{\arraystretch}{1.15}
\begin{tabular}{cccc}
\toprule
Lambda & MSE & Rel. Err. & PSNR \\
\midrule
$10^{-2}$ & 0.027848 & 0.2548 & 15.55 \\
$10^{-3}$ & 0.025360 & 0.2465 & 15.91 \\
$10^{-4}$ & 0.020912 & 0.2258 & 16.80 \\
$10^{-5}$ & 0.013299 & 0.1792 & 18.76 \\
$10^{-6}$ & 0.008150 & 0.1393 & 20.89 \\
$10^{-7}$ & 0.006956 & 0.1290 & 21.58 \\
\midrule
\textbf{Agg.} & \textbf{0.0081} & \textbf{0.1388} & \textbf{20.92} \\
\bottomrule
\end{tabular}
\end{minipage}
\hfill
\begin{minipage}[t]{0.48\textwidth}
\centering
\small
\textbf{(b) Motion blur (faces blurred)}\\
\renewcommand{\arraystretch}{1.15}
\begin{tabular}{cccc}
\toprule
Lambda & MSE & Rel. Err. & PSNR \\
\midrule
$10^{-2}$ & 0.009369 & 0.1453 & 20.28 \\
$10^{-3}$ & 0.005775 & 0.1155 & 22.39 \\
$10^{-4}$ & 0.004328 & 0.1008 & 23.64 \\
$10^{-5}$ & 0.004211 & 0.1003 & 23.76 \\
$10^{-6}$ & 0.004589 & 0.1050 & 23.38 \\
$10^{-7}$ & 0.004827 & 0.1077 & 23.17 \\
\midrule
\textbf{Agg.} & \textbf{0.0049} & \textbf{0.1090} & \textbf{23.10} \\
\bottomrule
\end{tabular}
\end{minipage}

\vspace{0.6em}

\begin{minipage}[t]{0.48\textwidth}
\centering
\small
\textbf{(c) Gaussian blur (sinograms blurred)}\\
\renewcommand{\arraystretch}{1.15}
\begin{tabular}{cccc}
\toprule
Lambda & MSE & Rel. Err. & PSNR \\
\midrule
$10^{-2}$ & 0.017347 & 0.2015 & 17.61 \\
$10^{-3}$ & 0.010324 & 0.1559 & 19.86 \\
$10^{-4}$ & 0.006115 & 0.1201 & 22.14 \\
$10^{-5}$ & 0.004034 & 0.0978 & 23.94 \\
$10^{-6}$ & 0.003751 & 0.0955 & 24.26 \\
$10^{-7}$ & 0.004447 & 0.1047 & 23.52 \\
\midrule
\textbf{Agg.} & \textbf{0.0052} & \textbf{0.1136} & \textbf{22.80} \\
\bottomrule
\end{tabular}
\end{minipage}
\hfill
\begin{minipage}[t]{0.48\textwidth}
\centering
\small
\textbf{(d) Gaussian blur (faces blurred)}\\
\renewcommand{\arraystretch}{1.15}
\begin{tabular}{cccc}
\toprule
Lambda & MSE & Rel. Err. & PSNR \\
\midrule
$10^{-2}$ & 0.01955 & 0.2171 & 17.09 \\
$10^{-3}$ & 0.011911 & 0.1693 & 19.25 \\
$10^{-4}$ & 0.007454 & 0.1336 & 21.27 \\
$10^{-5}$ & 0.005495 & 0.1145 & 22.60 \\
$10^{-6}$ & 0.004742 & 0.1006 & 23.24 \\
$10^{-7}$ & 0.004787 & 0.1029 & 23.20 \\
\midrule
\textbf{Agg.} & \textbf{0.0055} & \textbf{0.1175} & \textbf{22.54} \\
\bottomrule
\end{tabular}
\end{minipage}

\end{table}
}
\newpage
One of the main challenges in kernel-based learning is the selection of an appropriate kernel. To address this, we conducted experiments with four different kernels: the Gaussian kernel $k(x,\cdot) = k_1(x,\cdot)$ given defined in (\ref{eq:k1}), the Cauchy kernel
\[
k(x, \cdot) = k_2(x,\cdot)= \left(1 + \frac{1}{\gamma^2} \| x - \cdot\|^2_X\right)^{-1}, \quad \gamma = 5,
\]
the Exponential kernel
\[
k(x, \cdot) = k_3(x,\cdot)= \exp\!\left(-\frac{1}{\gamma^2} \| x - \cdot\|_X\right), \quad \gamma = 5,
\]
and the inverse multi-quadratic (IMQ) kernel 
\[
k(x, \cdot) = k_4(x,\cdot)= \left(\gamma^2 + \| x - \cdot\|^2_X\right)^{-1/2}, \quad \gamma = 5.
\]

Then we employed our multiple kernel learning approach, which is based on Theorem \ref{Tagg}. Namely, for each of the above kernels, we constructed the aggregated predictor (\ref{agglam}) 
using the regularization parameters $\lambda_s = 10^{-s-1}$, $s = 1,2,3,4,5,6$, and obtained the unified predictor by aggregating them again in the way described at the end of Section \ref{Aggregation}.
 As shown in Table~\ref{tab:kernel_performance}, the aggregated predictor (Agg.) achieves performance close to that of the best individual kernel, indicating its potential to alleviate the risk of selecting an inadequate hypothesis space.

\newpage

\begin{table}[t]
\centering
\caption{Performance results across different kernels under various blur settings for $n=m=3000$.}
\label{tab:kernel_performance}
\small
\renewcommand{\arraystretch}{1.2}

\begin{minipage}[t]{0.47\textwidth}
\centering
\textbf{(a) Motion blur (sinograms blurred)}\\[0.3em]
\begin{tabular}{lccc}
\toprule
Kernel & MSE & Rel.\ Err. & PSNR \\
\midrule
Gaussian     & 0.00810 & 0.13880 & 20.92 \\
Exponential  & 0.00890 & 0.14636 & 20.51 \\
Cauchy       & 0.01060 & 0.15435 & 19.75 \\
IMQ          & 0.00970 & 0.15378 & 20.13 \\
\midrule
\textbf{Agg.} & \textbf{0.00737} & \textbf{0.13124} & \textbf{21.32} \\
\bottomrule
\end{tabular}
\end{minipage}
\hfill
\begin{minipage}[t]{0.47\textwidth}
\centering
\textbf{(b) Motion blur (faces blurred)}\\[0.3em]
\begin{tabular}{lccc}
\toprule
Kernel & MSE & Rel.\ Err. & PSNR \\
\midrule
Gaussian     & 0.00490 & 0.10900 & 23.10 \\
Exponential  & 0.00411 & 0.09965 & 23.82 \\
Cauchy       & 0.00450 & 0.10408 & 23.47 \\
IMQ          & 0.00513 & 0.11150 & 22.89 \\
\midrule
\textbf{Agg.} & \textbf{0.00395} & \textbf{0.09715} & \textbf{24.03} \\
\bottomrule
\end{tabular}
\end{minipage}

\vspace{0.8em}

\begin{minipage}[t]{0.47\textwidth}
\centering
\textbf{(c) Gaussian blur (sinograms blurred)}\\[0.3em]
\begin{tabular}{lccc}
\toprule
Kernel & MSE & Rel.\ Err. & PSNR \\
\midrule
Gaussian     & 0.00524 & 0.11360 & 22.80 \\
Exponential  & 0.00780 & 0.13678 & 21.08 \\
Cauchy       & 0.00820 & 0.14019 & 20.86 \\
IMQ          & 0.00834 & 0.14173 & 20.78 \\
\midrule
\textbf{Agg.} & \textbf{0.00516} & \textbf{0.11206} & \textbf{22.88} \\
\bottomrule
\end{tabular}
\end{minipage}
\hfill
\begin{minipage}[t]{0.47\textwidth}
\centering
\textbf{(d) Gaussian blur (faces blurred)}\\[0.3em]
\begin{tabular}{lccc}
\toprule
Kernel & MSE & Rel.\ Err. & PSNR \\
\midrule
Gaussian     & 0.00550 & 0.11750 & 22.54 \\
Exponential  & 0.00460 & 0.10598 & 23.47 \\
Cauchy       & 0.00450 & 0.10475 & 23.51 \\
IMQ          & 0.00470 & 0.10685 & 23.29 \\
\midrule
\textbf{Agg.} & \textbf{0.00423} & \textbf{0.10265} & \textbf{23.73} \\
\bottomrule
\end{tabular}
\end{minipage}

\end{table}

\newpage
\section{Proofs of Auxiliary Results}\label{Auxillary_results}
\begin{proof}\textbf{of Lemma~\ref{Lemma:3}.}
Let $\{d_k\}_{k\ge 1}$ be an orthonormal basis of $Y$.
Since $C \in \mathcal S_2(\mathcal H,Y)$, it admits the Hilbert–Schmidt expansion
\[
C = \sum_{k,l} a_{k,l}\, d_k \otimes f_l .
\]

\noindent 
Assume that $C \in \mathcal S_2(\mathcal H^\varphi,Y)$, then
$
\|C\|_{\mathcal S_2(\mathcal H^\varphi,Y)}^2 < \infty .
$
Using the definition of the norm on $\mathcal H^\varphi$, we obtain
\begin{align*}
\|C\|_{\mathcal S_2(\mathcal H^\varphi,Y)}^2
&= \Big\| \sum_{k,l} a_{k,l}\, d_k \otimes f_l \Big\|_{\mathcal S_2(\mathcal H^\varphi,Y)}^2 = \sum_{k,l} \frac{a_{k,l}^2}{\varphi^2(\mu_l)} < \infty .
\end{align*}

\noindent
Now define the operator
\[
C_o = \sum_{k,l} \frac{a_{k,l}}{\varphi(\mu_l)} \, d_k \otimes f_l .
\]
By the above summability condition, we have $C_o \in \mathcal S_2(\mathcal H,Y)$. Moreover,
\[
C_o \, \varphi(C_X)
= \sum_{k,l} \frac{a_{k,l}}{\varphi(\mu_l)} \, d_k \otimes \varphi(\mu_l) f_l
= \sum_{k,l} a_{k,l}\, d_k \otimes f_l
= C .
\]
Hence,
$
C = C_o \, \varphi(C_X).
$

\bigskip
Conversely, assume that
\[
C_o = \sum_{k,l} a_{k,l}\, d_k \otimes f_l,
\qquad \text{with} \qquad
\sum_{k,l} a_{k,l}^2 < \infty .
\]
Define $
C := C_o \, \varphi(C_X),$ then we obtain
\[
C = \sum_{k,l} a_{k,l}\, d_k \otimes \varphi(\mu_l)\, f_l .
\]
Hence,
\[
\|C\|_{\mathcal S_2(\mathcal H^\varphi, Y)}^2
= \sum_{k,l} a_{k,l}^2
< \infty,
\]
which implies that \( C \in \mathcal S_2(\mathcal H^\varphi, Y) \).

\end{proof}

\begin{proof}\textbf{of Lemma~\ref{Lemma:1}.}
Let \(\xi\) be an \(\mathcal{S}_2(\mathcal{H})\)-valued random variable defined by
\[
\xi(x)
=
(C_{X}+\lambda I)^{-1/2}\,(\beta(x)\, \phi(x)\otimes \phi(x)) ,\quad x\in X.
\]

\noindent
Then,
\[
\mathbb{E}_p[\xi(x)]
=
(C_{X}+\lambda I)^{-1/2} C_X,
\qquad
\text{and}
\qquad
\|\xi(x)\|_{\mathcal{S}_2(\mathcal{H})}
\le
\frac{b\kappa^2}{\sqrt{\lambda}}.
\]

\noindent
Consider
\begin{align*}
\bigl\|
(C_{X}+\lambda I)^{-1/2}(\phi(x)\otimes \phi(x))
\bigr\|_{\mathcal S_2(\mathcal H)}^2
&=
\sum_{k=1}^\infty
\bigl\|
(C_{X}+\lambda I)^{-1/2}
(\phi(x)\otimes \phi(x)) f_k
\bigr\|_{\mathcal H}^2 \\[0.4em]
&=
\sum_{k=1}^\infty
\bigl\|
(C_{X}+\lambda I)^{-1/2}
\langle \phi(x), f_k\rangle\, \phi(x)
\bigr\|_{\mathcal H}^2 \\[0.4em]
&=
\left(\sum_{k=1}^\infty f_k(x)^2\right)
\bigl\|
(C_{X}+\lambda I)^{-1/2}\phi(x)
\bigr\|_{\mathcal H}^2 \\[0.4em]
&=
k(x,x)\,
\bigl\|
(C_{X}+\lambda I)^{-1/2}\phi(x)
\bigr\|_{\mathcal H}^2 .
\end{align*}

\noindent
Using the expansion \(\phi(x)=\sum_{l=1}^\infty f_l(x) f_l\), we obtain
\begin{align*}
\bigl\|
(C_{X}+\lambda I)^{-1/2}\phi(x)
\bigr\|_{\mathcal H}^2
&=
\sum_{l=1}^\infty
\frac{\langle \phi(x), f_l\rangle^2}{\lambda+\mu_l}
=
\sum_{l=1}^\infty
\frac{f_l(x)^2}{\lambda+\mu_l}.
\end{align*}

\noindent
Consequently,
\begin{align*}
\mathbb{E}_q
\bigl\|
(C_{X}+\lambda I)^{-1/2}(\phi(x)\otimes \phi(x))
\bigr\|_{\mathcal S_2(\mathcal H)}^2
&\leq
\kappa^2
\sum_{l=1}^\infty
\frac{\mu_l}{\lambda+\mu_l}
=
\kappa^2\,\mathcal{N}(\lambda),
\end{align*}
where we used the identity \(\mathbb{E}_q[f_l(x)^2]=\mu_l\). Hence, we have 

\[\mathbb{E}_p\|\xi(x)\|^2_{\mathcal{S}_2(\mathcal{H})}\leq b\kappa^2\mathcal{N}(\lambda).
\]

\noindent
Finally, applying Lemma~\ref{PINELIS:INEQUALITY}, it holds with probability at least \(1-\delta\) that
\[
\bigl\|(C_{X}+\lambda I)^{-1/2} (C_X - \widehat{C}_X^\beta)\bigr\|_{op}
\le
2\left(
\frac{2b\kappa^2}{\sqrt{\lambda}\,n}
+
\sqrt{\frac{b\kappa^2\mathcal{N}(\lambda)}{n}}
\right)
\log\!\left(\frac{2}{\delta}\right).
\]

\end{proof}

\begin{proof}\textbf{of Lemma~\ref{Lemma:2}.}
\noindent
Using the identity \(A^{-1}B = A^{-1}(B-A) + I\) and Lemma~\ref{Lemma:1}, we obtain that, with probability at least \(1-\delta\),
\[
\bigl\|
(C_{X}+\lambda I)^{-1} (\widehat{C}^{\beta}_{X}+\lambda I)
\bigr\|_{op}
\le
\frac{1}{\sqrt{\lambda}}\;
\bigl\|
(C_{X}+\lambda I)^{-1/2}
(  \widehat{C}_X^\beta - C_X)
\bigr\|_{op}
+ 1
\le
\frac{\mathcal{B}_{n,\lambda}\,
\log\!\left(\frac{2}{\delta}\right)}{\sqrt{\lambda}}
+ 1 .
\]

\noindent
Now, using the analysis of Proposition~1 in \cite{Distributed_learning}, we further obtain that, with probability at least \(1-\delta\),
\[
\bigl\|
(C_{X}+\lambda I)
(\widehat{C}^{\beta}_{X}+\lambda I)^{-1}
\bigr\|_{op}
\le
2
\left[
\left(
\frac{\mathcal{B}_{n,\lambda}\,
\log\!\left(\frac{2}{\delta}\right)}{\sqrt{\lambda}}
\right)^2
+ 1
\right].
\]
    
\end{proof}

\begin{proof}\textbf{of Lemma~\ref{lemma:5}.}
By Lemma~\ref{Lemma:1} and assumption \eqref{eq:condition_on_lambda}, we obtain with probability at least $1-\delta$ that
\begin{equation}\label{eq:lemma5-core}
\bigl\|
(\widehat{C}^\beta_X-C_X)(C_X+\lambda I)^{-1}
\bigr\|_{{op}}
\;\le\;
\frac12 .
\end{equation}

\noindent
Next, observe that
\[
(C_X+\lambda I)(\widehat{C}^\beta_X+\lambda I)^{-1}
=
\Bigl[
I+(\widehat{C}^\beta_X-C_X)(C_X+\lambda I)^{-1}
\Bigr]^{-1}.
\]
Using \eqref{eq:lemma5-core} together with the Neumann series argument, we deduce
\[
\bigl\|
(C_X+\lambda I)(\widehat{C}^\beta_X+\lambda I)^{-1}
\bigr\|_{{op}}
\;\le\;
\frac{1}{1-
\bigl\|(\widehat{C}^\beta_X-C_X)(C_X+\lambda I)^{-1}\bigr\|_{op}}
\;\le\;
2.
\]
Finally, since $0<l<1$, we apply Cordes inequality \cite{Cordes_1987} to get
\[
\bigl\|
(C_X+\lambda I)^l(\widehat{C}^\beta_X+\lambda I)^{-l}
\bigr\|_{{op}}
\le
2^{\,l}.
\]
This completes the proof.

\end{proof}

\begin{proof}\textbf{of Lemma~\ref{L_2(lemma-A)}.}
The following proof is obtained by adapting the analysis of Theorem 1 in \cite{Gupta_2025} to our setting. We start with
\begin{align*}
\big\|(C_*-\bar{C}_{\lambda}^\beta)\, C_X^{1/2} \big\|_{\mathcal S_2(\mathcal H,Y)}
\leq
\bigl\|
(C_X+\lambda I)^{1/2}
r_\lambda(\widehat C_{X}^{\beta}) (C_*)^*
\bigr\|_{\mathcal S_2(Y,\mathcal H)} .
\end{align*}

\noindent
By Lemma~\ref{lemma:5} (with $l=\tfrac12$) and equation \eqref{eq:condition_on_lambda}, we have 
\[
\big\|(C_X+\lambda I)^{1/2}(\widehat{C}^{\beta}_{X}+\lambda I)^{-1/2}\big\|_{op}
\le \sqrt{2}.
\]
Hence,
\begin{align*}
\big\|(C_*-\bar{C}_{\lambda}^\beta)\, C_X^{1/2} \big\|_{\mathcal S_2(\mathcal H,Y)}
&\le \sqrt{2} \,
\bigl\|
(\widehat{C}^{\beta}_{X}+\lambda I)^{1/2}
r_\lambda(\widehat C_X^{\beta})
(C_*)^*
\bigr\|_{\mathcal S_2(Y,\mathcal H)} .
\end{align*}

\noindent
Insert the identity
\[
I
=
(\widehat C_X^{\beta}+\lambda I)^{\nu-\frac12}
(\widehat C_X^{\beta}+\lambda I)^{-(\nu-\frac12)},
\]
to obtain
\begin{align*}
&\big\|(C_*-\bar{C}_{\lambda}^\beta)\, C_X^{1/2} \big\|_{\mathcal S_2(\mathcal H,Y)} \le \sqrt{2} \,
\bigl\|
(\widehat{C}^{\beta}_{X}+\lambda I)^{1/2}
r_\lambda(\widehat C_X^{\beta})
(\widehat C_X^{\beta}+\lambda I)^{\nu-\frac12}\bigr\|_{op}\\
&\hspace{12em} \times \bigl\|(\widehat C_X^{\beta}+\lambda I)^{-(\nu-\frac12)}
(C_*)^*
\bigr\|_{\mathcal S_2(Y,\mathcal H)} .
\end{align*}

\noindent
Now observe that
\begin{align*}
\bigl\|
(\widehat{C}_{X}^{\beta} +\lambda I)^{1/2}
r_\lambda(\widehat C_X^{\beta})
(\widehat C_X^{\beta}+\lambda I)^{\nu-\frac12}
\bigr\|_{{op}}
&=
\bigl\|
r_\lambda(\widehat C_X^{\beta})
(\widehat C_X^{\beta}+\lambda I)^\nu
\bigr\|_{{op}}  \le
2^\nu \lambda^\nu(\gamma_\nu+\gamma).
\end{align*}

\noindent
Therefore,
\begin{align*}
\big\|(C_*-\bar{C}_{\lambda}^\beta)\, C_X^{1/2} \big\|_{\mathcal S_2(\mathcal H,Y)}
&\le
2^{\nu+\frac12}\lambda^\nu(\gamma_\nu+\gamma)
\bigl\|
(\widehat C_X^{\beta}+\lambda I)^{-(\nu-\frac12)}
(C_*)^*
\bigr\|_{\mathcal S_2(Y,\mathcal H)} .
\end{align*}

\noindent
Let $\nu' = \nu-\tfrac12$. Then
\begin{align*}
\big\|(C_*-\bar{C}_{\lambda}^\beta)\, C_X^{1/2} \big\|_{\mathcal S_2(\mathcal H,Y)}
&\le
2\cdot 2^{\nu'}\lambda^{\nu'+\frac12}(\gamma_\nu+\gamma)
\bigl\|
(\widehat C_X^{\beta}+\lambda I)^{-\nu'}
(C_*)^*\bigr\|_{\mathcal S_2(Y,\mathcal H)}.
\end{align*}

\noindent
Let $\bar{\nu}$ denotes integer less than $\nu'$, then applying Lemma~\ref{lemma:5} again yields
\begin{align*}
\big\|(C_*-\bar{C}_{\lambda}^\beta)\, C_X^{1/2} \big\|_{\mathcal S_2(\mathcal H,Y)}
&\le 
2^{\nu'+1}
\bigl[\sqrt \lambda(\gamma_\nu+\gamma)\bigr]
\lambda^{\nu'} \Bigl\|
(\widehat C_X^{\beta}+\lambda I)^{-(\nu'-\bar \nu)}
(C_X+\lambda I)^{(\nu'-\bar \nu)} 
\bigr\|_{op} \\
& \qquad \times
\Bigl\|
(C_X+\lambda I)^{-(\nu'-\bar \nu)}
(\widehat C_X^{\beta}+\lambda I)^{-\bar \nu}
(C_*)^*
\Bigr\|_{\mathcal S_2(Y,\mathcal H)} \\
&\!\!\!\!\!\!\!\!\!\!\!\le
2^{2\nu-\bar \nu}
\bigl[\sqrt \lambda(\gamma_\nu+\gamma)\bigr]
\lambda^{\nu'}
\Bigl\|
(C_X+\lambda I)^{-(\nu'-\bar \nu)}
(\widehat C_X^{\beta}+\lambda I)^{-\bar \nu}
(C_*)^*
\Bigr\|_{\mathcal S_2(Y,\mathcal H)}.
\end{align*}

\noindent
Applying the triangle inequality, we obtain
\begin{align*}
&\lambda^{\nu'}
\bigl\|
(C_X+\lambda I)^{-(\nu'-\bar \nu)}
(\widehat C_X^{\beta}+\lambda I)^{-\bar \nu}
(C_*)^*
\bigr\|_{\mathcal S_2(Y,\mathcal H)} \\
&=
\lambda^{\nu'}
\bigl\|
\bigl[
(C_X+\lambda I)^{-(\nu'-\bar \nu)}
(\widehat C_X^{\beta}+\lambda I)^{-\bar \nu}
-
(C_X+\lambda I)^{- \nu'}
+
(C_X+\lambda I)^{- \nu'}
\bigr]
(C_*)^*
\bigr\|_{\mathcal S_2(Y,\mathcal H)} \\
&\le
\lambda^{\bar \nu} \underbrace{
\bigl\| [
(\widehat C_X^{\beta}+\lambda I)^{-\bar \nu}
-
(C_X+\lambda I)^{-\bar \nu}]
(C_*)^*
\bigr\|_{\mathcal S_2(Y,\mathcal H)}
}_{I}
+
\underbrace{
\lambda^{\nu'}
\|(C_X+\lambda I)^{-\nu'}(C_*)^*\|_{\mathcal S_2(Y,\mathcal H)}
}_{II}.
\end{align*}

\noindent
We now analyze \emph{Term~I} as follows.

\begin{align*}
I
&:= \Bigl\| \bigl( (\widehat{C}^\beta_{X}+\lambda I)^{-\bar \nu}
      - (C_X+\lambda I)^{-\bar \nu} \bigr)
      \varphi(C_X)C_o^{*} \Bigr\|_{\mathcal S_2(Y,\mathcal H)}
      \nonumber\\[0.4em]
&\le
\Bigl\| (\widehat{C}^\beta_{X}+\lambda I)^{-(\bar \nu-1)}
       \bigl( (\widehat{C}^\beta_{X}+\lambda I)^{-1}
       - (C_X+\lambda I)^{-1} \bigr)
       \varphi(C_X)C_o^{*}
\Bigr\|_{\mathcal S_2(Y,\mathcal H)}
\nonumber\\
&\quad
+ \Biggl\|
\sum_{i=1}^{\bar \nu-1}
(\widehat{C}^\beta_{X}+\lambda I)^{-i}
(C_X-\widehat{C}^\beta_{X})
(C_X+\lambda I)^{-(\bar \nu+1-i)}
\varphi(C_X)C_o^{*}
\Biggr\|_{\mathcal S_2(Y,\mathcal H)}
\nonumber\\[0.5em]
&\le
\frac{1}{\lambda^{\bar \nu}}
\Bigl\|
(\widehat{C}^\beta_{X}+\lambda I)\bigl( (\widehat{C}^\beta_{X}+\lambda I)^{-1}
       - (C_X+\lambda I)^{-1} \bigr)\Bigr\|_{{op}}
\,
\|(C_{*})^*\|_{\mathcal S_2(Y,\mathcal H)}
\nonumber\\
&\quad
+ \sum_{i=1}^{\bar \nu-1}
\frac{1}{\lambda^{\,i+\bar \nu-i}}
\Bigl\|
(C_X-\widehat{C}^\beta_{X})(C_X+\lambda I)^{-1}
\Bigr\|_{{op}}
\,
\|(C_{*})^*\|_{\mathcal S_2(Y,\mathcal H)}
\nonumber\\[0.4em]
&=
\frac{1}{\lambda^{\bar \nu}}
\Bigl\|
(C_X-\widehat{C}^\beta_{X})(C_X+\lambda I)^{-1}
\Bigr\|_{{op}}
\,
\|C_{*}\|_{\mathcal S_2(\mathcal H,Y)}
\nonumber
\\
&\quad
+ \sum_{i=1}^{\bar \nu-1}
\frac{1}{\lambda^{\bar \nu}}
\Bigl\|
(C_X-\widehat{C}^\beta_{X})(C_X+\lambda I)^{-1}
\Bigr\|_{{op}}
\,
\|C_{*}\|_{\mathcal S_2(\mathcal H,Y)}
\nonumber\\[0.4em]
&=
\frac{\bar \nu}{\lambda^{\bar \nu}}
\Bigl\|
(C_X-\widehat{C}^\beta_{X})(C_X+\lambda I)^{-1}
\Bigr\|_{{op}}
\,
\|C_{*}\|_{\mathcal S_2(\mathcal H,Y)}.
\end{align*}

\noindent
Note that the first inequality is obtained using Lemma \ref{Naveen-acha:lemma}.

\vspace{0.4em}

\noindent
Using Lemma~\ref{Lemma:1}, with probability at least $1-\delta$,
\begin{equation}
I
\le
\frac{2\bar \nu}{\lambda^{\bar \nu}}
\left(
\frac{2b\kappa^2}{\lambda n}
+
\sqrt{\frac{b\kappa^2\mathcal N(\lambda)}{n\lambda}}
\right)
\log\!\left(\frac{2}{\delta}\right)
\|C_{*}\|_{\mathcal S_2(\mathcal H,Y)} .
\end{equation}

\noindent
To obtain a bound for \emph{Term~II}, we observe that
\begin{align*}
\lambda^{\nu'}
\bigl\|(C_X+\lambda I)^{-\nu'}\varphi(C_X)C_o^*\bigr\|_{\mathcal S_2(Y,\mathcal H)}
&\le
\lambda^{\nu'}
\bigl\|(C_X+\lambda I)^{-\nu'}\varphi(C_X)\bigr\|_{{op}}
\,\|C_o\|_{\mathcal S_2(\mathcal H,Y)}.
\nonumber\\
\end{align*}

\noindent
Now, using Lemma~2 in \cite{Gupta_2025}, we have

\[\lambda^{\nu'}
\bigl\|(C_X+\lambda I)^{-\nu'}\varphi(C_X)C_o^*\bigr\|_{\mathcal S_2(Y,\mathcal H)}\le
\max\!\left(1,\frac{1}{c}\right) 
\varphi(\lambda)\|C_o\|_{\mathcal S_2(\mathcal H,Y)}.\]

\noindent
Combining the bounds for Terms~I and~II, with probability at least $1-\delta$, we have
\begin{align*}
&\bigl\|(C_*-\bar{C}_{\lambda}^\beta) C_X^{1/2}
\bigr\|_{\mathcal S_2(\mathcal H,Y)}
\le
2^{\,2\nu-\bar \nu}
(\gamma_\nu+\gamma)\max\!\left(1,\frac{1}{c}\right) \\
&\quad\times
\Biggl[
2\bar \nu\,\|C_{*}\|_{\mathcal S_2(\mathcal H,Y)}
\left(
\frac{2b\kappa^2}{\sqrt \lambda n}
+
\sqrt{\frac{b\kappa^2\mathcal N(\lambda)}{n}}
\right)
\log\!\left(\frac{2}{\delta}\right) 
+
\varphi(\lambda)\sqrt{\lambda}\,
\|C_o\|_{\mathcal S_2(\mathcal H,Y)}
\Biggr].
\end{align*}

\noindent
The desired result follows, which completes the proof.
\end{proof}

\begin{proof}\textbf{of Lemma~\ref{L2:bound-B}.}
We begin by observing that
\begin{align*}
\bigl\| ( \widehat{C}_\lambda^\beta-\bar{C}_{\lambda}^\beta)\, C_X^{1/2} \bigr\|_{\mathcal  S_2(\mathcal H,Y)}
\leq
\bigl\|
( \widehat{C}_{Y,X}^\beta-{C_*}\widehat C_X^\beta)\,
g_\lambda(\widehat{C}_X^\beta)\,
(C_{X}+\lambda I)^{1/2}
\bigr\|_{\mathcal S_2(\mathcal H,Y)} .
\end{align*}

\noindent
Next, we factorize the operator as
\begin{align*}
&\bigl\|
(\widehat{C}_{Y,X}^\beta - C_* \widehat{C}_X^\beta)\,
g_\lambda(\widehat{C}_X^\beta)\,
(C_X+\lambda I)^{1/2}
\bigr\|_{\mathcal S_2(\mathcal H,Y)} \\
&=
\Bigl\|
\bigl(\widehat{C}_{Y,X}^\beta - C_* \widehat{C}_X^\beta\bigr)
(C_X+\lambda I)^{-1/2}
\;
(C_X+\lambda I)^{1/2}
(\widehat{C}_X^\beta+\lambda I)^{-1/2} \\
&\qquad\qquad
(\widehat{C}_X^\beta+\lambda I)^{1/2}
g_\lambda(\widehat{C}_X^\beta)
(\widehat{C}_X^\beta+\lambda I)^{1/2}
\;
(\widehat{C}_X^\beta+\lambda I)^{-1/2}
(C_X+\lambda I)^{1/2}
\Bigr\|_{\mathcal S_2(\mathcal H,Y)} \\
&\le
\underbrace{\bigl\|
(\widehat{C}_{Y,X}^\beta - C_* \widehat{C}_X^\beta)
(C_X+\lambda I)^{-1/2}
\bigr\|_{\mathcal S_2(\mathcal H,Y)}}_{A}
\;\\
& \hspace{9.5em}  \times 
\underbrace{\bigl\|
(C_X+\lambda I)^{1/2}
(\widehat{C}_X^\beta+\lambda I)^{-1/2}
\bigr\|_{op}^2}_{B}
\; 
\underbrace{\bigl\|
(\widehat{C}_X^\beta+\lambda I)
g_\lambda(\widehat{C}_X^\beta)
\bigr\|_{op}}_{C}.
\end{align*}

\noindent
To bound term $A$, we define the random variable
\[
\xi : X \times Y \to \mathcal S_2(\mathcal H,Y), 
\qquad
\xi(x,y)
=
\beta(x)\,
\bigl(y-C_*\phi(x)\bigr)
\otimes
(C_X+\lambda I)^{-1/2}\phi(x).
\]
Let $M=\|y\|_Y+\|C_*\|_{\mathcal S_2(\mathcal H,Y)}\kappa$, then
\[
\|\xi(x,y)\|_{\mathcal S_2(\mathcal H,Y)}
\leq
b\,
\frac{\|y\|_Y+\|C_*\|_{\mathcal S_2(\mathcal H,Y)}\kappa}{\sqrt{\lambda}}
\,\kappa =
\frac{b\,M\,\kappa}{\sqrt{\lambda}}
 .
\]

\noindent
Moreover,
\begin{align*}
\mathbb{E}_p\bigl[\|\xi\|_{\mathcal S_2(\mathcal H,Y)}^2\bigr]
&\leq
(\|y\|_Y+\|C_*\|_{\mathcal S_2(\mathcal H,Y)}\kappa)^2
\int
\beta^2(x)
\bigl\|
(C_X+\lambda I)^{-1/2}\phi(x)
\bigr\|_{\mathcal H}^2
\,dp_X(x) \\
&\leq
b(\|y\|_Y+\|C_*\|_{\mathcal S_2(\mathcal H,Y)}\kappa)^2
\int
\bigl\|
(C_X+\lambda I)^{-1/2}\phi(x)
\bigr\|_{\mathcal H}^2
\,dq_X(x).
\end{align*}

\noindent
Using Lemma~17 of \cite{meunier2024optimal}, we obtain
\[
\mathbb{E}\bigl[\|\xi\|_{\mathcal S_2(\mathcal H,Y)}^2\bigr]
\leq
bM^2
\mathcal{N}(\lambda).
\]

\noindent
Applying Lemma~\ref{PINELIS:INEQUALITY}, we obtain that with probability at least $1-\delta$,
\begin{align*}
\bigl\|
( \widehat{C}_{Y,X}^\beta-{C_*}\widehat C_X^\beta)\,
(C_X+\lambda I)^{-1/2}
\bigr\|_{\mathcal  S_2(\mathcal H,Y)}
\;\leq\;
\frac{4bM\kappa}{n\sqrt{\lambda}}
\log\!\left( \frac{2}{\delta} \right)
+ 2M
\sqrt{
\frac{b\mathcal{N}(\lambda)}{n}
}\log\!\left( \frac{2}{\delta} \right).
\end{align*}

\noindent
Applying Cordes inequality \cite{Cordes_1987}, Lemma~\ref{Lemma:2} and equation \eqref{eq:condition_on_lambda} yields the bound for term $B$:
\begin{align*}
\bigl\|
(C_{X}+\lambda I)^{1/2}
(\widehat{C}^{\beta}_{X}+\lambda I)^{-1/2}
\bigr\|_{op}^2
\leq
2\Biggl[
\left(
\frac{\mathcal{B}_{n,\lambda}
\log\!\left(\frac{2}{\delta}\right)}{\sqrt{\lambda}}
\right)^2
+1
\Biggr] \leq \frac{5}{2}.
\end{align*}

\noindent
By the defining properties of the regularization family, term $C$ satisfies
\[
\bigl\|
(\widehat{C}^{\beta}_{X}+\lambda I)
g_\lambda(\widehat{C}_X^\beta)
\bigr\|_{op}
\leq
B+D.
\]

Combining the three bounds yields the desired result, completing the proof.
\end{proof}

\section{ACKNOWLEDGMENT}

Vaibhav Silmana thanks the University Grants Commission (UGC), Government of India, for its ﬁnancial assistance (Student ID. 221610001628).

\vskip 0.2in
\bibliography{references}

@article{ma2023optimally,
  title   = {Optimally Tackling Covariate Shift in {{RKHS}}-Based Nonparametric Regression},
  author  = {Ma, Cong and Pathak, Reese and Wainwright, Martin J.},
  journal = {The Annals of Statistics},
  volume  = {51},
  number  = {2},
  pages   = {738--761},
  year    = {2023}
}

@article{gogolashvili2023importance,
  title   = {When Is Importance Weighting Correction Needed for Covariate Shift Adaptation?},
  author  = {Gogolashvili, Davit and Zecchin, Matteo and Kanagawa, Motonobu and
             Kountouris, Marios and Filippone, Maurizio},
  journal = {arXiv preprint arXiv:2303.04020},
  year    = {2023}
}

@article{fan2025spectral,
  title   = {Spectral Algorithms under Covariate Shift},
  author  = {Fan, Jun and Guo, Zheng-Chu and Shi, Lei},
  journal = {arXiv preprint arXiv:2504.12625},
  year    = {2025}
}

@article{liu2025spectral,
  title   = {Spectral Algorithms in Misspecified Regression: Convergence under Covariate Shift},
  author  = {Liu, Ren-Rui and Guo, Zheng-Chu},
  journal = {arXiv preprint arXiv:2509.05106},
  year    = {2025}
}

@article{bainbridge2013intrinsic,
  title   = {The Intrinsic Memorability of Face Photographs},
  author  = {Bainbridge, Wilma A. and Isola, Phillip and Oliva, Aude},
  journal = {Journal of Experimental Psychology: General},
  volume  = {142},
  number  = {4},
  pages   = {1323--1334},
  year    = {2013}
}

@article{Profsergie2022,
  title   = {On a Regularization of Unsupervised Domain Adaptation in {RKHS}},
  author  = {Gizewski, Elke R. and Mayer, Lukas and Moser, Bernhard A. and
             Nguyen, Duc Hoan and Pereverzyev Jr., Sergiy and
             Pereverzyev, Sergei V. and Shepeleva, Natalia and Zellinger, Werner},
  journal = {Applied and Computational Harmonic Analysis},
  volume  = {57},
  pages   = {201--227},
  year    = {2022}
}

@book{aubin,
  author    = {Aubin, Jean-Pierre},
  title     = {Applied Functional Analysis},
  edition   = {Second},
  series    = {Pure and Applied Mathematics},
  publisher = {Wiley-Interscience},
  year      = {2000},
  doi       = {10.1002/9781118032725}
}

@inproceedings{DBLP:conf/iclr/DinuHBNHEMPHZ23,
  author    = {Dinu, Marius-Constantin and Holzleitner, Markus and Beck, Maximilian and
               Nguyen, Hoan Duc and Huber, Andrea and Eghbal-zadeh, Hamid and
               Moser, Bernhard A. and Pereverzyev, Sergei V. and
               Hochreiter, Sepp and Zellinger, Werner},
  title     = {Addressing Parameter Choice Issues in Unsupervised Domain Adaptation by Aggregation},
  booktitle = {International Conference on Learning Representations},
  year      = {2023}
}

@article{VRKHS_REF,
  title   = {Towards Optimal Sobolev Norm Rates for the Vector-Valued Regularized Least-Squares Algorithm},
  author  = {Li, Zhu and Meunier, Dimitri and Mollenhauer, Mattes and Gretton, Arthur},
  journal = {Journal of Machine Learning Research},
  volume  = {25},
  number  = {181},
  pages   = {1--51},
  year    = {2024}
}

@article{IWRLS1,
  title   = {A Least-Squares Approach to Direct Importance Estimation},
  author  = {Kanamori, Takafumi and Hido, Shohei and Sugiyama, Masashi},
  journal = {Journal of Machine Learning Research},
  volume  = {10},
  pages   = {1391--1445},
  year    = {2009}
}

@article{carmeli2008vectorvaluedreproducingkernel,
  title   = {Vector-Valued Reproducing Kernel {H}ilbert Spaces and Universality},
  author  = {Carmeli, Claudio and De Vito, Ernesto and Toigo, Alessandro and Umanit{\`a}, Veronica},
  journal = {Analysis and Applications},
  volume  = {8},
  pages   = {19--61},
  year    = {2010}
}

@article{rastogi2017optimal,
  title   = {Optimal Rates for the Regularized Learning Algorithms under General Source Condition},
  author  = {Rastogi, Abhishake and Sampath, Sivananthan},
  journal = {Frontiers in Applied Mathematics and Statistics},
  volume  = {3},
  pages   = {3},
  year    = {2017}
}

@inproceedings{covnystrom,
  title     = {Computational Efficiency under Covariate Shift in Kernel Ridge Regression},
  author    = {Della Vecchia, Andrea and Watusadisi, Arnaud Mavakala and
               De Vito, Ernesto and Rosasco, Lorenzo},
  booktitle = {Advances in Neural Information Processing Systems},
  year      = {2025}
}

@article{regularization,
  title   = {On Regularization Algorithms in Learning Theory},
  author  = {Bauer, Frank and Pereverzyev, Sergei and Rosasco, Lorenzo},
  journal = {Journal of Complexity},
  volume  = {23},
  pages   = {52--72},
  year    = {2007},
  doi     = {10.1016/j.jco.2006.07.001}
}

@article{Wittenberg_Neumann_Mendler_Gertheiss_2025,
  title   = {Covariate-Adjusted Functional Data Analysis for Structural Health Monitoring},
  author  = {Wittenberg, Philipp and Neumann, Lizzie and Mendler, Alexander and Gertheiss, Jan},
  journal = {Data-Centric Engineering},
  volume  = {6},
  pages   = {e27},
  year    = {2025},
  doi     = {10.1017/dce.2025.18}
}

@inproceedings{meunier2024optimal,
  title     = {Optimal Rates for Vector-Valued Spectral Regularization Learning Algorithms},
  author    = {Meunier, Dimitri and Shen, Zikai and Mollenhauer, Mattes and
               Gretton, Arthur and Li, Zhu},
  booktitle = {Advances in Neural Information Processing Systems},
  year      = {2024}
}

@article{Gupta_2025,
  title   = {Revisiting General Source Condition in Learning over a {H}ilbert Space},
  author  = {Gupta, Naveen and Sivananthan, S.},
  journal = {Inverse Problems},
  volume  = {41},
  number  = {7},
  pages   = {075014},
  year    = {2025},
  doi     = {10.1088/1361-6420/adec15}
}

@article{shimodaira2000improving,
  title   = {Improving Predictive Inference under Covariate Shift by Weighting the Log-Likelihood Function},
  author  = {Shimodaira, Hidetoshi},
  journal = {Journal of Statistical Planning and Inference},
  volume  = {90},
  number  = {2},
  pages   = {227--244},
  year    = {2000},
  doi     = {10.1016/S0378-3758(00)00115-4}
}

@inproceedings{impossibility,
  title     = {Impossibility Theorems for Domain Adaptation},
  author    = {Ben-David, Shai and Lu, Tyler and Luu, Teresa and Pal, David},
  booktitle = {Proceedings of the Thirteenth International Conference on Artificial Intelligence and Statistics},
  pages     = {129--136},
  year      = {2010}
}

@article{one-one,
  title   = {On Learning Vector-Valued Functions},
  author  = {Micchelli, Charles A. and Pontil, Massimiliano},
  journal = {Neural Computation},
  volume  = {17},
  number  = {1},
  pages   = {177--204},
  year    = {2005},
  doi     = {10.1162/0899766052530802}
}

@article{aspri,
  title   = {Data-Driven Regularization by Projection},
  author  = {Aspri, Andrea and Korolev, Yury and Scherzer, Otmar},
  journal = {Inverse Problems},
  volume  = {36},
  pages   = {125009},
  year    = {2020},
  doi     = {10.1088/1361-6420/abb61b}
}

@article{ALPHA-SELECTION,
  title   = {On Regularized Radon--Nikodym Differentiation},
  author  = {Nguyen, Duc Hoan and Zellinger, Werner and Pereverzyev, Sergei},
  journal = {Journal of Machine Learning Research},
  volume  = {25},
  number  = {266},
  year    = {2024}
}

@article{blurring,
  title   = {Blind Image Deblurring Using Spectral Properties of Convolution Operators},
  author  = {Liu, Guangcan and Chang, Shiyu and Ma, Yi},
  journal = {IEEE Transactions on Image Processing},
  volume  = {23},
  number  = {12},
  pages   = {5047--5056},
  year    = {2014},
  doi     = {10.1109/TIP.2014.2362055}
}

@article{Distributed_learning,
  title   = {Learning Theory of Distributed Spectral Algorithms},
  author  = {Guo, Zheng-Chu and Lin, Shaobo and Zhou, Ding-Xuan},
  journal = {Inverse Problems},
  volume  = {33},
  year    = {2017},
  doi     = {10.1088/1361-6420/aa72b2}
}

@book{Cordes_1987,
  title     = {Spectral Theory of Linear Differential Operators and Comparison Algebras},
  author    = {Cordes, Heinz Otto},
  publisher = {Cambridge University Press},
  year      = {1987}
}

@article{GUPTA_ACHA2025101745,
  title   = {Optimal Rates for Functional Linear Regression with General Regularization},
  author  = {Gupta, Naveen and Sivananthan, S. and Sriperumbudur, Bharath K.},
  journal = {Applied and Computational Harmonic Analysis},
  volume  = {76},
  pages   = {101745},
  year    = {2025},
  doi     = {10.1016/j.acha.2024.101745}
}

@article{Pinelis1985,
  title   = {Remarks on Inequalities for Large Deviation Probabilities},
  author  = {Pinelis, I. F. and Sakhanenko, A. I.},
  journal = {Theory of Probability \& Its Applications},
  volume  = {30},
  number  = {1},
  pages   = {143--148},
  year    = {1986},
  doi     = {10.1137/1130013}
}

@article{blanchard2016optimalratesHolders,
  title   = {Optimal Rates for Regularization of Statistical Inverse Learning Problems},
  author  = {Blanchard, Gilles and M{\"u}cke, Nicole},
  journal = {Foundations of Computational Mathematics},
  volume  = {18},
  number  = {4},
  pages   = {971--1013},
  year    = {2018},
  doi     = {10.1007/s10208-017-9359-7}
}

\end{document}